\numberwithin{equation}{section}
\definecolor{dgreen}{cmyk}{1,.5,1,.2}
\newtheorem{theorem}{Theorem}[section]
\newtheorem{lemma}[theorem]{Lemma}
\newtheorem{proposition}[theorem]{Proposition}
\newtheorem{corollary}[theorem]{Corollary}
\def\s{\backslash}
\def\os{\otimes^{n,s}}
\def\osc{\widetilde{\otimes}^{n,s}}
\def\Q{\mathcal{Q}}
\def\q{\mathcal{Q}}
\def\K{\kappa}
\def\P{\mathcal{P}^n}
\def\p{\mathcal{P}^n}
\def\tsty{\textstyle}
\begin{document}

\title[The Five Basic Lemmas]{Five basic lemmas for symmetric tensor products of normed spaces}

\author{Daniel Carando}
\thanks{Partially supported by UBACyT Grant X218 and CONICET PIP 0624.}

\author{Daniel Galicer}

\thanks{Partially supported by UBACyT Grant X038, CONICET PIP 0624 and a doctoral fellowship from CONICET.}

\address{Departamento de Matem\'{a}tica, Facultad de Ciencias Exactas y Naturales, Universidad de Buenos Aires,\\ Pab. I, Cdad Universitaria (1428)  Buenos Aires, Argentina and CONICET.} \email{dcarando@dm.uba.ar}
\email{dgalicer@dm.uba.ar}

\begin{abstract} We give the symmetric version of five lemmas which are essential for the theory of tensor products (and norms). These are: the approximation, extension, embedding, density and local technique lemmas. Some application of these tools to the metric theory of symmetric tensor products and to the theory of polynomials ideals are given.
\end{abstract}

\keywords{Symmetric tensor products, homogenous polynomials}

\subjclass[2000]{46M05, 46G25, 47L22}

\maketitle

\section*{Introduction}

Grothendieck, in its \emph{``R\'{e}sum\'{e} de la th\'{e}orie m\'{e}trique des produits tensoriels
topologiques''} \cite{Groth53}, created the basis of what was later known as `local theory', and exhibited the importance of the use of tensor products in the theory of Banach spaces and Operator ideals. Tensor products had appeared in functional analysis since the late thirties, in works of Murray, Von Neumann and Schatten (see \cite{Sch50}). But it was Grothendieck who realized the local nature of many properties of tensor products, and this allowed him to establish a very useful theory of duality. In 1968, Grothendieck's resum\'{e} was fully appreciated when Lindenstrauss and Pe{\l}czy\'nski \cite{LinPel68}
presented important applications to the theory of absolutely $p$-summing operators, translating ideas written in terms of tensor norms by Grothendieck, into properties of operator ideals. By the same time, a general theory of operator ideals on the class of Banach spaces was developed by  Pietsch and his school, without the use of tensor norms \cite{Pie78}. As stated by Defant and Floret in their book \cite{DefFlo93}, ``both theories, the theory of tensor norms and of norm operator ideals, are more easily understood and also richer if one works with both simultaneously''.

In 1980, Ryan introduced symmetric tensor products of Banach spaces, as a tool  for the study of polynomials and holomorphic mappings \cite{Rya80}. Since then, many steps were given towards a metric theory of symmetric tensor products and a theory of polynomial ideals. As in the linear case, both theories influence and contribute to each other. In his survey \cite{Flo97}, Floret presented the algebraic basics of symmetric tensor products, together with a thorough account of fundamental metric results for the two extreme tensor norms: the symmetric projective tensor norm $\pi_s$ and the symmetric injective tensor norm $\varepsilon_s$. Unfortunately, there is not such a treatise on general symmetric tensor norms, although  the theory of symmetric tensor products and polynomial ideals steadily evolved in the last decades, as we can see in the (rather incomplete) list \cite{BotBraJunPel06,BoyLass05,BoyLas10,BoyRya01,CarDimMur09,Car-GalPrims,CarGal11,DefDiaGarMae01,Flo01,Flo01b,Flo02(On-ideals),Flo02(Ultrastability),FloGar03}. This note aims to contribute to a systematic development of the theory of symmetric tensor products of Banach spaces and polynomial ideals.

In the theory of full 2-fold tensor norms, ``The Five Basic Lemmas'' (see Section 13 in Defant and Floret's book \cite{DefFlo93}) are rather simple results which turn out to be ``basic for the understanding and use of tensor norms''.
Namely, they are the Approximation Lemma, the Extension Lemma, the Embedding Lemma, the Density Lemma and the Local Technique Lemma. Applications of these lemmas can be seen throughout the book.
We present here the analogous results for the symmetric setting. We will also exhibit some applications as example of their potential. In order to obtain our five basic lemmas and their applications, we combine new and known results in a methodic way, following the lines of \cite{DefFlo93}. Although some proofs are similar to the 2-fold case, the symmetric nature of our tensor products introduces some difficulties, as we can see, for example, in the symmetric version of the Extension Lemma \ref{extension lemma}, whose proof is much more complicated than that of its full 2-fold version.

The article is organized as follows. In Section 1 we give the definitions and some general results that will be used in the sequel. In Section 2 we state and prove the five basic lemmas, together with some direct consequences. Applications are given in Sections 3 and 4. Section 3 collects those applications on the metric theory of symmetric tensor norms. In Section 4 we consider applications to Banach polynomial ideals. In particular, we reformulate some results of the previous sections in terms polynomial ideals.

\smallskip
We refer to \cite{DefFlo93, Rya02} for the theory of tensor norms and operator ideals, and to \cite{Flo97,Flo01,Flo02(On-ideals),Flo02(Ultrastability)} for symmetric  tensor products and polynomial ideals.

\section{Preliminaries}

Throughout the paper $E$ and $F$ will be real or complex normed spaces, $E'$ will denote the dual space of $E$, $\K_E: E \longrightarrow E''$ the canonical embedding of $E$ into its bidual, and $B_E$ will be the closed unit ball of $E$. We will denote by $FIN(E)$  the class of all finite dimensional subspaces of $E$ and $COFIN(E)$ will stand for the class of all finite codimensional closed subspaces of the space $E$.

A surjective mapping $T: E \to F$ is called \emph{a metric surjection} if $$\|T(x)\|_F=\inf\{\|y\|_E : T(y)=x \},$$ for all $x \in E$. As usual, a mapping $I : E \to F$ is called \emph{isometry} if $\|Ix\|_F = \|x\|_E$ for all $x \in E$. We will use the notation  $\overset 1 \twoheadrightarrow$ and  $\overset 1 \hookrightarrow$ to indicate a metric surjection or an isometry, respectively.
We also write $E\overset 1=F$ whenever $E$ and $F$ are isometrically isomorphic Banach spaces (i.e., there exist an surjective isometry $I : E \to F$).
For a Banach space $E$ with unit ball $B_E$, we call the mapping $Q_E : \ell_1(B_E) \overset 1 \twoheadrightarrow E$ given by
\begin{equation}\label{canonical quotient}Q_E\big((a_x)_{x\in B_E} \big)= \sum_{x\in B_E} a_x x\end{equation}
the \emph{canonical quotient mapping}.
Also, we consider the \emph{canonical embedding} $I_E: E\to \ell_\infty(B_{E'})$ given by \begin{equation}\label{canonical embedding}I_E(x)=\big(x'(x)\big)_{x'\in B_{E'}}.\end{equation}

A normed space $E$ has the \emph{$\lambda$-bounded approximation property} if there is a net $(T_{\eta})_\eta$ of finite rank operators in $\mathcal{L}(E,E)$ with norm bounded by $\lambda$ such that $T_\eta $ conveges to $Id_E$ (the identity operator on $E$) uniformly on compact subsets of $E$. A Banach space has the \emph{metric approximation property} if it has the 1-bounded approximation property.
\bigskip

For a normed space $E$, we will denote by $\otimes^n E$ the $n$-fold
tensor product of $E$.
For simplicity, $\os x$ will stand for the tensor the elementary tensor $x \otimes\cdots \otimes
x$.
The subspace of $\otimes^n E$ consisting of all
tensors of the form $\sum_{j=1}^r \lambda_j \os x_j$, where $\lambda_j$ is a scalar and $x_j\in E$ for all $j$, is called the
\emph{symmetric $n$-fold tensor product of $E$} and is denoted by $\os E$. When $E$ is a vector space over $\mathcal C$, the scalars are not needed in the previous expression. For simplicity, we will use the complex notation, although our results hold for real and complex spaces. 

Given a normed space $E$ and a continuous operator $T\colon E\to F$,
\emph{the symmetric $n$-tensor power of $T$} (or the \emph{tensor operator of $T$}) is the mapping from $\os
E$ to $\os F$ defined by
$$
\tsty \Big( \os T \Big)(\os x) = \os (Tx)
$$
on the elementary tensors and extended by linearity.

For an $n$-fold symmetric tensor $z\in \os E$, the \emph{symmetric projective norm} of $z$ is given by
$$
\pi_s(z)=\inf\left\{\sum_{j=1}^r \| x_j\|^n\right\},
$$
where the infimum is taken over
all the representations of $z$ of the form  $\sum_{j=1}^r \os x_j$.

On the other hand
\emph{the symmetric
injective norm} of $z$ is defined by
$$
\varepsilon_s(z)= \sup_{x'\in B_{E'}}\left|\sum_{j=1}^r
 x'(x_j)^n\right|,
$$
where $\sum_{j=1}^r \os x_j$ is any fixed representation of $z$.
For properties of these two classical norms ($\varepsilon_s$ and $\pi_s$) see \cite{Flo97}.

\medskip

Symmetric tensor products linearize homogeneous polynomials. Recall that a function $p\colon E\to \mathbb{K}$ is said to be a (continuous)
\emph{$n$-homogeneous polynomial} if there exists a
(continuous) symmetric $n$-linear form \allowbreak $$ A \colon
\underbrace{E\times\cdots\times E}_{{\rm n-times}} \to \mathbb{K}$$ such that
$p(x)= A(x,\ldots,x)$ for all $x\in E$.
In this case, $A$ is called the symmetric $n$-linear form associated to $p$.
Continuous
$n$-homogeneous polynomials are those bounded on the unit ball, and the norm of such $p$ is given by $$\|p\|=\sup_{\|x\|\le 1}|p(x)|.$$ If we denote
by ${\P}(E)$ the Banach space of all continuous
$n$-homogeneous polynomials on $E$ endowed endowed with the sup norm, we have the isometric identification \begin{equation}\label{dualidad-pol-tensor}{\P}(E) \overset 1 = \big(\os_{\pi_s} E\big)'.\end{equation}

\bigskip
We say that  $\alpha$ is an \emph{s-tensor norm  of order $n$} if $\alpha$ assigns to each normed space $E$ a norm $\alpha \big(\; . \;; \otimes^{n,s} E \big)$ on the $n$-fold symmetric tensor product $\otimes^{n,s} E$ such that
\begin{enumerate}
\item $\varepsilon_s \leq \alpha \leq \pi_s$ on $\otimes^{n,s} E$.
\item $\| \os T :   \os_\alpha E \to \os_\alpha F \| \leq \|T\|^n$ for each operator $T \in \mathcal{L}(E,F)$.
\end{enumerate}
Condition $(2)$ will be referred to as the \emph{``metric mapping property''}. We denote by ${\otimes}^{n,s}_\alpha E$ the tensor product ${\otimes}^{n,s}E$ endowed with the norm $\alpha \big(\; . \;; \otimes^{n,s} E \big)$, and we write
$\widetilde{\otimes}^{n,s}_\alpha E$ for its completion.

An s-tensor norm $\alpha$ is called \emph{finitely generated} if for every normed space $E$ and $z \in \otimes^{n,s} E$, we have: $$ \alpha (z, \otimes^{n,s}E) = \inf \{ \alpha(z, \otimes^{n,s}M) : M \in FIN(E),\; z \in \otimes^{n,s}M \}.$$
The norm $\alpha$ is called \emph{cofinitely generated} if for every normed space $E$ and $z \in \otimes^{n,s} E$, we have: $$ \alpha (z, \otimes^{n,s}E) = \sup \{ \alpha \big((\os Q_L^E) (z), \otimes^{n,s}E/L \big) : L \in COFIN(E)\}, $$
where $Q_L^E : E \overset 1 \twoheadrightarrow E/L$ is the canonical mapping.

If $\alpha$ is an s-tensor norm of order $n$, then the \emph{dual tensor norm $\alpha'$} is defined on FIN (the class of finite dimensional spaces) by
\begin{equation}\label{defi prima dim finita}   \os_{\alpha'} M :\overset 1 = \big( \os_{\alpha} M'\big)'\end{equation}
and on NORM (the class of normed spaces) by
$$ \alpha' ( z, \os E ) : = \inf \{ \alpha' (z, \os M ) : z \in \os M \},$$
the infimum being taken over all of finite dimensional subspaces $M$ of $E$ whose symmetric tensor product contains $z$. By definition, $\alpha'$ is always finitely generated.

Given a tensor norm $\alpha$ its \emph{``finite hull''} $\overrightarrow{\alpha}$ is defined by the following way. For $z \in \os E$, we set
$$\overrightarrow{\alpha}(z,\os E) := \inf \{ \alpha(z; \os M) : M \in FIN(E), z \in \os M  \}.$$
An important remark is in order: since $\alpha$ and $\alpha''$ coincide on finite dimensional spaces we have
$$
\overrightarrow{\alpha}(z;\os E) = \inf \{ \alpha''(z; \os M) : M \in FIN(E), z \in \os M  \} = \alpha''(z;\os E),
$$
where the second equality is due to the fact that dual norms are always finitely generated. Therefore,
\begin{equation} \label{bidual de una norma tensorial}
\overrightarrow{\alpha} = \alpha '';
\end{equation} and $\alpha=\alpha''$ if and only if $\alpha$ is finitely generated.

The \emph{``cofinite hull''} $\overleftarrow{\alpha}$ is given by
$$\overleftarrow{\alpha}(z;\os E) := \sup \{ \alpha\big( (\os Q_L^E) (z); \os E/L \big) : L \in COFIN(E)  \},$$
 where $Q_L^E: E \overset 1 \twoheadrightarrow E/L$ is the canonical quotient mapping.
Is not hard to see that the ``finite hull'' $\overrightarrow{\alpha}$ (the ``cofinite hull'' $\overleftarrow{\alpha}$) is the unique finitely generated s-tensor norm (cofinitely generated s-tensor norm) that coincides with $\alpha$ in finite dimensional spaces.
By the metric mapping property, it is enough to take cofinally many $M$ (or  $L$) in the definitions of the finite (or cofinite) hull. Using the metric mapping property again we have $$\overleftarrow{\alpha} \leq \alpha \leq \overrightarrow{\alpha}.$$

\bigskip
Since  any s-tensor norm satisfies $\alpha \leq \pi_s$, we have a dense inclusion $${\otimes}^{n,s}_\alpha  E \hookrightarrow  {\otimes}^{n,s}_\pi E. $$ As a consequence, any $p\in \big({\otimes}^{n,s}_\alpha  E\big)'$ identifies with a $n$-homogeneous polynomial on $E$. Different s-tensor norms $\alpha$ give rise, by this duality, to different classes of polynomials (see Section~\ref{seccion-polinomios}).

\section{The Lemmas}
In this section we will give in full detail the symmetric analogues to the five basic lemmas that appear in \cite[Section 13]{DefFlo93}. The first of them states that for normed spaces with the bounded approximation property, it is enough to check dominations between s-tensor norms on finite dimensional subspaces.

\begin{lemma}\label{Aprox lema}
\textbf{\emph{ (Approximation Lemma.)}} Let $\beta$ and $\gamma$ be s-tesnor norms, $E$ a normed space with the $\lambda$-bounded approximation property with constant and $c \geq 0$ such that
$$ \alpha \leq c \beta \;\; \mbox{on} \; \os M,$$
for cofinally many $M \in FIN(E)$. Then
$$\alpha \leq \lambda^n c \beta \;\; \mbox{on} \; \os E.$$

\end{lemma}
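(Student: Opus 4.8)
The plan is to transport the finite-dimensional domination up to all of $E$ by means of the net provided by the bounded approximation property, paying for the approximation with the metric mapping property. Fix $z \in \os E$ and let $(T_\eta)_\eta$ be a net of finite rank operators in $\mathcal{L}(E,E)$ with $\|T_\eta\| \leq \lambda$ converging to $Id_E$ uniformly on compact subsets of $E$. For each $\eta$ set $M_\eta := T_\eta(E) \in FIN(E)$; writing $z = \sum_j \os x_j$, the element $\os T_\eta(z) = \sum_j \os(T_\eta x_j)$ then lies in $\os M_\eta$.

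First I would settle the convergence $\os T_\eta(z) \to z$. Since $x \mapsto \os x$ is continuous from $E$ into $\os_{\pi_s} E$ and $T_\eta x_j \to x_j$ for each of the finitely many $j$, we obtain $\os T_\eta(z) \to z$ in $\os_{\pi_s} E$. Because every s-tensor norm satisfies $\alpha \leq \pi_s$, the same convergence holds in $\os_\alpha E$, and the reverse triangle inequality yields $\alpha(\os T_\eta(z), \os E) \to \alpha(z, \os E)$. It therefore suffices to bound $\alpha(\os T_\eta(z), \os E)$ uniformly in $\eta$ by $\lambda^n c\, \beta(z, \os E)$.

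The core estimate runs as follows. Using that the hypothesis holds for cofinally many $M$, I would choose $M \in FIN(E)$ in that family with $M_\eta \subseteq M$, so that $\os T_\eta(z) \in \os M$. Corestricting $T_\eta$ to $M$ produces an operator $S_\eta : E \to M$ with $\|S_\eta\| \leq \lambda$ (the norm on $M$ being induced from $E$) and $\os S_\eta(z) = \os T_\eta(z)$ as an element of $\os M$. Since the inclusion $j\colon M \hookrightarrow E$ is an isometry, the metric mapping property gives $\|\os j : \os_\alpha M \to \os_\alpha E\| \leq \|j\|^n = 1$, whence $\alpha(\os T_\eta(z), \os E) \leq \alpha(\os T_\eta(z), \os M)$. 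On $\os M$ the hypothesis applies, and the metric mapping property for $S_\eta$ then controls $\beta$:
\[
\alpha(\os T_\eta(z), \os E) \leq \alpha(\os T_\eta(z), \os M) \leq c\, \beta(\os S_\eta(z), \os M) \leq c\, \|S_\eta\|^n\, \beta(z, \os E) \leq \lambda^n c\, \beta(z, \os E).
\]
Passing to the limit in $\eta$ and invoking the convergence from the previous step gives $\alpha(z, \os E) \leq \lambda^n c\, \beta(z, \os E)$, as required.

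The step I expect to require the most care is the matching between the ranges $M_\eta$ of the approximating operators and the cofinal family on which domination is assumed: one must enlarge $M_\eta$ to a member $M$ of that family and verify that this enlargement only helps, which rests on the monotonicity $\alpha(\,\cdot\,, \os E) \leq \alpha(\,\cdot\,, \os M)$ coming from the metric mapping property applied to the isometric inclusion. The power $\lambda^n$ (rather than $\lambda$) is precisely the $n$-th power in the metric mapping property for $\os S_\eta$, which is where the $n$-homogeneous nature of the symmetric tensor power enters.
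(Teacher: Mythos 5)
Your proposal is correct and follows essentially the same route as the paper: approximate $z$ by $\os T_\eta(z)$ using the continuity of $x\mapsto\os x$, enlarge the finite-dimensional range $T_\eta(E)$ to a member $M$ of the cofinal family, and chain the metric mapping property (for the isometric inclusion $M\hookrightarrow E$ and for the corestriction $T_\eta\colon E\to M$) with the assumed domination on $\os M$ to get the factor $\lambda^n c$. The only cosmetic difference is that you pass to the limit in $\eta$ via the reverse triangle inequality, whereas the paper fixes $\varepsilon$ and uses the triangle inequality directly; these are interchangeable.
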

\begin{proof}Take $(T_{\eta})_\eta$  a net of finite rank operators with $\|T_\eta\|\le \lambda$ and $T_\eta x \to x$ for all $x \in E$. Fix $z\in \os E$ and take $\varepsilon >0$. Since the mapping $x\mapsto \otimes^n x$ is continuous from $E$ to $\otimes^{n,s}_\alpha E$, we have $\alpha(z-T_\eta (z),\os E )<\varepsilon$ for some $\eta$ large enough.
If we take $M\supset T_\eta(E)$ satisfying the hypothesis of the lemma, by the metric mapping property of the s-tensor $\beta$ we have
\begin{align*}
\alpha(z;\os E) & \leq  \alpha(z - \os T_\eta (z);\os E) + \alpha(\os T_\eta (z);\os E) \\
& \leq \varepsilon + \alpha(\os T_\eta (z);\os M) \\
& \leq \varepsilon + c \beta(\os T_\eta (z);\os M) \\
& \leq \varepsilon + c \|T_\eta: E \to M \|^n \beta(\os z;\os E) \\
& \leq \varepsilon + \lambda^n c  \beta(\os z;\os E).
\end{align*}
Since this holds for every $\varepsilon > 0$,  we have $\alpha(z;\os E) \leq \lambda^n c  \beta(z;\os E)$.
\end{proof}

\smallskip
Now we will devote our efforts to give a symmetric version of the Extension Lemma~\cite[6.7.]{DefFlo93}. A bilinear form $\varphi$ on $E\times F$ can be canonically extended to a bilinear form $\varphi^{\wedge}$ on $E\times F''$ and to a bilinear for $^\wedge \varphi$  on $E''\times F$ \cite[1.9]{DefFlo93}.
Aron and Berner showed in~\cite{AroBer78} (see also \cite{NachoSurvey}) how to extend continuous polynomials (and some holomorphic functions) defined on a Banach space $E$ to the bidual $E''$. The Aron-Berner extension can be seen as a symmetric (or polynomial) version of the canonical extensions $\varphi^{\wedge}$ and $^\wedge \varphi$.
In order to show that some holomorphic functions defined on the unit ball of $E$ can be extended to the ball of $E''$, Davie and Gamelin~\cite{DavieGamelin98} proved that this extension preserves the norm of the polynomial. If we look at the duality between polynomials and symmetric tensor products in \eqref{dualidad-pol-tensor}, Davie and Gamelin's result states that for $p$ in $( \widetilde{\otimes}^{n,s}_{\pi_s} E \big)'$, its Aron-Berner extension $AB(p)$ belongs to $( \widetilde{\otimes}^{n,s}_{\pi_s} E'' \big)'$, and has the same norm as $p$.
A natural question arises: if a polynomial $p$ belongs to $\big( \widetilde{\otimes}^{n,s}_\alpha
E \big)'$ for some s-tensor norm $\alpha$, does it Aron-Berner extension $AB(p)$ belong to $\big( \widetilde{\otimes}^{n,s}_\alpha
E'' \big)'$? And what about their norms?  The answer is given in the following result.

.

\begin{lemma} \label{extension lemma}
\textbf{\emph{(Extension Lemma.)}} Let $\alpha$ be a finitely generated s-tensor norm and $p \in \big( \widetilde{\otimes}^{n,s}_\alpha E \big)'$ a
polynomial. The Aron-Berner extension $AB(p)$ of $p$ to the bidual
$E''$ belongs to $\big( \widetilde{\otimes}^{n,s}_\alpha
E'' \big)'$ and
$$ \|p\|_{\big( \widetilde{\otimes}^{n,s}_\alpha E \big)'} = \|AB(p)\|_{\big( \widetilde{\otimes}^{n,s}_\alpha E'' \big)'}.$$
\end{lemma}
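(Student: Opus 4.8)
The plan is to reduce the equality of norms to a statement about finite-dimensional subspaces of $E''$ and to transport it back to $E$ through the Principle of Local Reflexivity. The inequality $\|p\| \le \|AB(p)\|$ is the easy one: since $\K_E$ is an isometry, the metric mapping property gives $\|\os \K_E : \os_\alpha E \to \os_\alpha E''\| \le \|\K_E\|^n = 1$, and $AB(p)$ extends $p$, so $AB(p)\circ(\os\K_E) = p$; hence $|p(z)| = |AB(p)((\os\K_E)z)| \le \|AB(p)\|\,\alpha(z;\os E)$ for all $z$. For the reverse inequality I would use that $\alpha$ is finitely generated, so that the norm of a functional on $\osc_\alpha E''$ is the supremum of its norms on the finite-dimensional pieces: $\|AB(p)\|_{(\osc_\alpha E'')'} = \sup\{\|AB(p)\|_{(\os_\alpha N)'} : N \in FIN(E'')\}$. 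It therefore suffices to bound each $\|AB(p)\|_{(\os_\alpha N)'}$ by $\|p\|$; finiteness of the supremum then yields both membership of $AB(p)$ in $(\osc_\alpha E'')'$ and the norm estimate.

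Fix $N \in FIN(E'')$. I would run a net of local-reflexivity operators: for each $G \in FIN(E')$ and $\varepsilon>0$, the Principle of Local Reflexivity furnishes $T=T_{G,\varepsilon}\colon N\to E$ with $\|T\|\le 1+\varepsilon$ and $\langle x',Tz''\rangle = \langle z'',x'\rangle$ for all $z''\in N$, $x'\in G$. The key elementary identity is $AB(p)(\K_E T z'') = p(Tz'')$ for every $z''\in N$, because $\K_E T z''$ lies in $\K_E(E)$, where $AB(p)$ agrees with $p$. Passing to tensor powers, $(\os\K_E)\circ(\os T) = \os(\K_E T)$, so for every $w\in\os N$ one gets the exact factorization $\langle AB(p),\os(\K_E T)(w)\rangle = \langle p,(\os T)(w)\rangle$. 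The metric mapping property bounds the right-hand side: $|\langle p,(\os T)(w)\rangle| \le \|p\|\,\alpha((\os T)(w)) \le (1+\varepsilon)^n\|p\|\,\alpha(w)$. Thus the only thing left is to replace $\os(\K_E T)(w)$ by $w$ itself inside the pairing with $AB(p)$.

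The heart of the matter is to show that, as $G$ increases along $FIN(E')$ (directed by inclusion, with $\varepsilon\to 0$), $\langle AB(p),\os(\K_E T_G)(w)\rangle \to \langle AB(p),w\rangle$. Testing on an elementary tensor $w=\os z''$, this is exactly the assertion $A(Tz'',\dots,Tz'') \to \overline{A}(z'',\dots,z'')$, where $A$ is the symmetric $n$-linear form associated to $p$ and $\overline{A}$ is its iterated (Arens) extension whose diagonal is $AB(p)$. Since $\K_E T_G z''$ and $z''$ agree on $G\uparrow E'$, one has $T_G z''\to z''$ in the weak-star topology, so this says that evaluating $A$ at the local-reflexivity approximants recovers the Aron--Berner extension in the limit. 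I would prove it by induction on $n$, peeling off the variable extended last: its slice functional $x\mapsto \overline{A}(x,z'',\dots,z'')$ is a genuine element of $E'$ that does not involve $T$, so by forcing it into $G$ the outermost weak-star limit is matched exactly, and one is left with the same problem for the $(n-1)$-linear form obtained by freezing the first slot at $Tz''$.

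I expect this inductive step to be the main obstacle, and precisely the reason the symmetric Extension Lemma is harder than its bilinear version. The Aron--Berner extension is built from $n$ nested weak-star limits, and the slice functionals generated at the inner stages depend on the coordinates already extended --- hence on $T$ itself --- so no single choice of $G$ can produce an exact factorization throughout. The estimate must be organised so that each stage, once processed, contributes no error while the current limit is controlled by the weak-star convergence $T_G z''\to z''$; in the bilinear case a single extension step suffices and this nesting never arises. Granting the convergence, I would conclude $\langle AB(p),w\rangle = \lim_G\langle p,(\os T_G)(w)\rangle$, whence $|\langle AB(p),w\rangle|\le\|p\|\,\alpha(w;\os N)$ for all $w\in\os N$ because $\|T_G\|^n\to 1$. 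This gives $\|AB(p)\|_{(\os_\alpha N)'}\le\|p\|$ for every $N\in FIN(E'')$, and taking the supremum over $N$ --- legitimate since $\alpha$ is finitely generated --- yields $\|AB(p)\|_{(\osc_\alpha E'')'}\le\|p\|$. Together with the easy reverse inequality this proves the claimed equality.
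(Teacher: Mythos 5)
Your overall architecture matches the paper's: the easy inequality via the metric mapping property, the reduction to $N \in FIN(E'')$ using that $\alpha$ is finitely generated, and the idea of pushing tensors back into $E$ through local reflexivity operators of norm at most $1+\varepsilon$ before invoking the metric mapping property. The gap sits exactly at the step you yourself flag as ``the main obstacle'' and then bypass by ``granting the convergence'': the claim that a single net $T_G$ of local reflexivity operators satisfies $A(T_Gz'',\dots,T_Gz'')\to\overline{A}(z'',\dots,z'')$. Your induction forces the first slice functional $\overline{A}(\,\cdot\,,z'',\dots,z'')|_E$ into $G$, which matches the outermost iterated limit exactly; but the slice functional needed at the next stage is $\overline{A}(T_Gz'',\,\cdot\,,z'',\dots,z'')|_E$, which depends on $T_G$ and hence on $G$, so it cannot be forced into $G$ without circularity. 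Since $n$-homogeneous polynomials are not weak-star continuous on bounded sets (e.g.\ $x\mapsto\sum_i x_i^2$ on $\ell_2$), the remaining inner limits are not controlled by the weak-star convergence $T_Gz''\to z''$ alone, and the induction does not close. As written, the reverse inequality is therefore not proved.

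The paper resolves precisely this point with the Davie--Gamelin averaging device, which is the idea missing from your proposal. One chooses not one but $m$ operators $R_1,\dots,R_m\in\mathcal{L}(M,E)$ \emph{sequentially}, each new operator obtained from local reflexivity with respect to a finite-dimensional subspace of $E'$ containing the slice functionals generated by the operators already chosen (Lemma~\ref{diferentes indices}); this yields the exact identity $A(R_{i_1}x_k'',\dots,R_{i_n}x_k'')=\overline{A}(x_k'',\dots,x_k'')$ for all \emph{distinct} indices $i_1,\dots,i_n$. One then evaluates $p$ at the single averaged vector $Rx_k''$ with $R=\frac{1}{m}\sum_{i=1}^m R_i$: in the expansion of $A(Rx_k'',\dots,Rx_k'')$ the non-repeated-index terms are exact, while the repeated-index terms number $m^n-\prod_{j=0}^{n-1}(m-j)$ and so contribute less than $\varepsilon$ after division by $m^n$ once $m$ is large (Lemma~\ref{Aprox}). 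Since $\|R\|\le 1+\varepsilon$, the metric mapping property then finishes the proof exactly as in your outline. To salvage your single-net picture you would have to replace it by this multi-operator construction plus averaging; without it the key convergence remains unestablished.
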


In \cite{Car-GalPrims} we used ultrapower techniques to prove this lemma. We will give here a direct using standar tools. First, we recall the following.
\begin{theorem}\label{localreflexivity} \textbf{\emph{(The Principle of Local Reflexivity.)}}
For each $M \in FIN(E'')$, $N \in FIN(E')$ and $\varepsilon > 0$, there
exists an operator $R \in \mathcal{L}(M,E)$ such that
\begin{enumerate}
\item  R is an $\varepsilon$-isometry; that is, $(1-\varepsilon) \|x''\| \leq \|R(x'')\| \leq (1+ \varepsilon ) \|x''\|;$
\item $R(x'')=x''$ for every $x'' \in M \cap E$;
\item $ x'( R (x'') ) = x''(x')$ for $x'' \in M$ and $x' \in
N$.
\end{enumerate}
\end{theorem}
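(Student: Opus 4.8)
The plan is to reduce the three requirements to a single approximation problem and to dispose of the delicate lower norm bound in condition (1) by converting it into extra instances of the functional-matching condition (3). First observe that we are free to enlarge $N$: replacing $N$ by any larger $N_0 \in FIN(E')$ only strengthens (3), so it suffices to prove the statement for a conveniently chosen $N_0 \supseteq N$. Fix a basis $e_1, \dots, e_k$ of $M$ chosen so that those $e_i$ lying in $\K_E(E)$ form a basis of $M \cap \K_E(E)$; the operator $R$ is then determined by the vectors $y_i := R(e_i) \in E$, condition (2) becomes $y_i = \K_E^{-1}(e_i)$ for the first group of indices, and condition (3) becomes the finite linear system $x'(y_i) = e_i(x')$ for $x' \in N_0$. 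Thus the entire content is to produce $y_i \in E$ solving this system while making the induced map $e_i \mapsto y_i$ an $\varepsilon$-isometry.

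The core step is a Helly-type simultaneous moment approximation: for finite dimensional $M \in FIN(E'')$ and $N_0 \in FIN(E')$ there is $R_0 \in \mathcal{L}(M,E)$ with $\|R_0\| \le 1 + \varepsilon$, restricting to the identity on $M \cap \K_E(E)$, and with $x'(R_0 x'') = x''(x')$ for all $x'' \in M$, $x' \in N_0$. I would obtain this from Goldstine's theorem via Dean's identification: since $M$ is finite dimensional one has $\mathcal{L}(M,E)'' = \mathcal{L}(M,E'')$ isometrically, and under the predual pairing $\langle T, m \otimes x' \rangle = x'(Tm)$ the canonical inclusion $j := \K_E|_M$, a norm-one point of the bidual ball, has weak$^*$ neighborhoods described exactly by the moment data of (3). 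Goldstine then produces $R_0$ in $(1+\varepsilon)B_{\mathcal{L}(M,E)}$ — a genuine operator into $E$ — matching the finitely many moments $x'(R_0 m) = m(x')$ up to arbitrarily small error; a routine finite-dimensional perturbation upgrades this approximate agreement to the exact identity (3) at the cost of an arbitrarily small increase in norm. Condition (2) is arranged by splitting $M = (M \cap \K_E(E)) \oplus M_1$ and applying the construction only to $M_1$, the first summand being left fixed (for which (3) then holds automatically).

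It remains to upgrade $R_0$ to an $\varepsilon$-isometry, and here the net trick enters. Since $M$ is finite dimensional its unit sphere $S_M$ is compact; choose a finite $\varepsilon$-net $z_1, \dots, z_p$ of $S_M$, and for each $z_i$ pick $\phi_i \in B_{E'}$ with $|z_i(\phi_i)| > 1 - \varepsilon$, possible because $\|z_i\|_{E''} = \sup_{\phi \in B_{E'}} |z_i(\phi)| = 1$. Now apply the core step with $N_0 := N + \mathrm{span}\{\phi_1, \dots, \phi_p\}$. Condition (3) then forces $|\phi_i(R z_i)| = |z_i(\phi_i)| > 1 - \varepsilon$, whence $\|R z_i\| > 1 - \varepsilon$ at every net point; combined with $\|R\| \le 1 + \varepsilon$ and the net property, a routine continuity estimate on the compact sphere $S_M$ gives $(1 - c\varepsilon)\|x''\| \le \|R x''\| \le (1+\varepsilon)\|x''\|$ for all $x'' \in M$, with an absolute constant $c$. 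Rescaling $\varepsilon$ at the outset then yields the two-sided bound (1) with the required constants, while (2) and (3) (for the original $N \subseteq N_0$) are preserved.

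The step I expect to be the main obstacle is the lower norm bound in (1). Goldstine's theorem (equivalently, a direct Hahn-Banach separation on the moment system, whose consistency in $E''$ is witnessed by $j$ itself) delivers the upper norm control and the exact functional agreement, but by itself gives no lower bound on $\|R x''\|$: a weak$^*$-approximating operator could a priori collapse part of $M$. The resolution is precisely the net trick, which trades the analytic lower-bound requirement for finitely many additional linear moment conditions that the core approximation step absorbs at no cost. The only points requiring care are that enlarging $N$ is harmless, which is immediate, and that the finite-dimensionality of $M$ legitimizes both the isometric identification $\mathcal{L}(M,E)'' = \mathcal{L}(M,E'')$ and the compactness of $S_M$ used throughout.
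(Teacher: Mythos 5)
The paper never proves this statement: Theorem \ref{localreflexivity} is the classical principle of local reflexivity (Lindenstrauss--Rosenthal, in the refined form with conditions (2) and (3) due to Johnson--Rosenthal--Zippin), recalled without proof as a tool for the Extension Lemma. So your attempt must be judged against the standard proofs in the literature, and your architecture is indeed the standard Dean-style one: the isometric identification $\mathcal{L}(M,E)''\overset{1}{=}\mathcal{L}(M,E'')$ for finite dimensional $M$, Goldstine's theorem to get operators of norm $\le 1+\varepsilon$ into $E$ matching finitely many moments approximately, a correction to make the scalar conditions (3) exact, and the $\varepsilon$-net of norming functionals to convert the lower bound in (1) into extra instances of (3). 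Those parts are sound. The genuine gap is your treatment of condition (2). You enforce $R=\mathrm{id}$ on $M\cap\K_E(E)$ by splitting $M=(M\cap\K_E(E))\oplus M_1$, running the Goldstine argument only on $M_1$, and gluing. But the operator norm of such a direct sum is not controlled by the norms of its two pieces: it is only bounded by $\|P_0\|+(1+\varepsilon)\|P_1\|$, where $P_0,P_1$ are the projections determined by the splitting, and in a general finite-dimensional $M$ there need be no projection of norm close to one along a prescribed subspace. Hence the glued $R$ need not satisfy $\|R\|\le 1+\varepsilon$; this loses the upper estimate in (1), and since your net argument propagates the lower bound from the net points to the whole sphere precisely via $\|R\|\le 1+\varepsilon$, the lower estimate collapses too. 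Getting (1) and (2) simultaneously is exactly the hard point of the theorem, and the splitting trick does not achieve it.

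The standard repair keeps your Helly-type scheme but puts (2) \emph{inside} the constraint system instead of outside it. Choose a basis $e_1,\dots,e_k$ of $M$ with $e_1,\dots,e_{k_0}$ a basis of $M\cap\K_E(E)$, and consider $S:\mathcal{L}(M,E)\to E^{k_0}\times\mathbb{K}^{kl}$ given by $S(T)=\big((Te_i)_{i\le k_0},\,(x'_l(Te_j))_{j,l}\big)$. Its range is closed (it is cut out by the finitely many compatibility equations relating the two blocks of coordinates), and the target $\big((e_i)_{i\le k_0},\,(e_j(x'_l))_{j,l}\big)$ lies in that range. Goldstine, via Dean's identification, yields operators of norm $\le 1$ whose images under $S$ converge to the target only \emph{weakly}, because the $E$-valued coordinates $Te_i$ converge weak$^*$, i.e.\ weakly in $E$, not in norm; Mazur's theorem on convex combinations upgrades this to norm convergence without increasing the norm, and then an open-mapping correction inside the closed range of $S$ makes all constraints exact at an arbitrarily small cost in norm. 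Your phrase ``routine finite-dimensional perturbation'' is correct for the scalar constraints (3), whose constraint map has finite-dimensional (hence closed) range, but it cannot handle the $E$-valued constraints (2); the Mazur step plus the closed-range/open-mapping correction is the missing ingredient. With that replacement, your net trick then finishes the proof exactly as you describe.
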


Let $A$ be a symmetric multilinear form. For each fixed $j$, $1 \leq j \leq n$, $x_1, \dots, x_{j-1} \in
E$, and $x_j'', x_{j+1}'', \dots x_n'' \in E''$, it is easy to see that
$$\overline{A}(x_1, \dots, x_{j-1}, x_j'', x_{j+1}'', \dots,
x_n'') = \lim_{\alpha_j} \overline{A}(x_1, \dots,
x_{j-1},x_{{\alpha}_j}^{(j)}, x_{j+1}'', \dots, x_{n}''),$$ where
$\overline A$ is the iterated extension of $A$ to $E''$ and $(x_{{\alpha}_j}^{(j)}) \in E$ such that $w^*-\lim_{\alpha_j} x_{{\alpha}_j}^{(j)} = x_j''$.

Now, we will imitate the procedure used by Davie and
Gamelin in \cite{DavieGamelin98}. From now on $A$ will be the symmetric $n$-linear form associated to
$p$. We have the following lemma.

\begin{lemma}\label{diferentes indices} Let $M \in FIN(E'')$ and $x_1'', \dots, x_r'' \in M$. For a given natural number $m$, and $\varepsilon >
0$ there exist operators $R_1, \dots, R_m \in \mathcal{L}(M,E)$
with norm less or equal to $1 + \varepsilon$ such that
\begin{equation}\label{desigualdad lema}  A(x_k'', \dots, x_k'') = A( R_{i_1}x_k'', \dots,
R_{i_n}x_k'' )\end{equation} for every $i_1, \dots, i_n$
distinct indices between $1$ and $m$ and every $k = 1 \dots r$.
\end{lemma}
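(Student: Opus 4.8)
The plan is to imitate the Davie--Gamelin procedure, realizing the iterated extension $\overline A$ one slot at a time by means of the Principle of Local Reflexivity (Theorem~\ref{localreflexivity}), and to choose $R_1,\dots,R_m$ successively in increasing order of index. Here $A(x_k'',\dots,x_k'')$ is to be understood as the value $\overline A(x_k'',\dots,x_k'')$ of the iterated extension. The identity I would establish, by induction on the number $t$ of already realized slots, is
\begin{equation*}
\overline A\big(R_{i_1}x_k'',\dots,R_{i_t}x_k'',\underbrace{x_k'',\dots,x_k''}_{n-t}\big)=\overline A(x_k'',\dots,x_k'')
\end{equation*}
for all increasing indices $i_1<\cdots<i_t$ and all $k$; the case $t=n$ is exactly \eqref{desigualdad lema}, since then all arguments lie in $E$, where $\overline A$ agrees with $A$. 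Passing to increasing indices costs nothing, because by symmetry of $A$ the number $A(R_{i_1}x_k'',\dots,R_{i_n}x_k'')$ depends only on the set $\{i_1,\dots,i_n\}$.

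The inductive step is a single swap. Writing $y_l=R_{i_l}x_k''\in E$ for $l\le t-1$, consider the functional $\phi:=\big(z\mapsto\overline A(y_1,\dots,y_{t-1},z,x_k'',\dots,x_k'')\big)$, which is linear and continuous on $E$ because $\overline A$ is bounded and linear in each variable, so $\phi\in E'$. On the one hand, plugging $z=R_{i_t}x_k''\in E$ gives $\overline A(y_1,\dots,y_{t-1},R_{i_t}x_k'',x_k'',\dots,x_k'')=\phi(R_{i_t}x_k'')$. On the other hand, the extension formula recalled just before the lemma yields $\overline A(y_1,\dots,y_{t-1},x_k'',\dots,x_k'')=\lim_\alpha\phi(x_\alpha)=x_k''(\phi)$, where $x_\alpha\to x_k''$ in the weak-star topology. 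Hence, as soon as $R_{i_t}$ fulfils condition~(3) of Theorem~\ref{localreflexivity} with $\phi$ in its prescribed subspace, we get $\phi(R_{i_t}x_k'')=x_k''(\phi)$, so the $t$-th realized expression equals the $(t-1)$-th one, and the induction hypothesis closes the step.

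The delicate point, which forces the ordered construction, is that $\phi$ depends on the operators $R_{i_1},\dots,R_{i_{t-1}}$ of the earlier slots. I would circumvent this circularity by building $R_1,\dots,R_m$ one after another and, before selecting $R_s$, setting $N_s\in FIN(E')$ equal to the span of all functionals
\begin{equation*}
z\longmapsto\overline A\big(R_{i_1}x_k'',\dots,R_{i_{j-1}}x_k'',\,z,\,\underbrace{x_k'',\dots,x_k''}_{n-j}\big),
\end{equation*}
taken over all $k\in\{1,\dots,r\}$, all positions $j\in\{1,\dots,n\}$, and all increasing sequences $i_1<\cdots<i_{j-1}$ drawn from $\{1,\dots,s-1\}$. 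These involve only the already-constructed operators $R_1,\dots,R_{s-1}$, so each $N_s$ is a genuine finite-dimensional subspace of $E'$; Theorem~\ref{localreflexivity} applied to $M$, $N_s$ and $\varepsilon$ then furnishes an $\varepsilon$-isometry $R_s\in\mathcal L(M,E)$ with $\|R_s\|\le 1+\varepsilon$ satisfying (3) relative to $N_s$. Since the indices are increasing, at the $t$-th swap the relevant $\phi$ uses only operators of index smaller than $i_t$, hence lies in $N_{i_t}$ by construction, and the induction proceeds up to $t=n$, giving \eqref{desigualdad lema}. The main difficulty is thus organizational---arranging $N_s$ so that every $\phi$ ever needed is available---rather than analytic.
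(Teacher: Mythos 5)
Your proposal is correct and follows essentially the same route as the paper: order the indices increasingly using the symmetry of $A$, construct $R_1,\dots,R_m$ successively via the Principle of Local Reflexivity with the finite-dimensional subspace $N_s\subset E'$ spanned by all functionals built from the already-chosen operators, and close the argument by a one-slot-at-a-time swap using condition (3). Your write-up is in fact a more explicit version of the paper's ``proceeding in this way,'' making the induction on the number of realized slots and the membership $\phi\in N_{i_t}$ precise.
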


\begin{proof}
Since $A$ is symmetric, in order  to prove the Lemma it suffices
to obtain \eqref{desigualdad lema} for $i_1 < \dots < i_n$. We
will select the operator $R_1, \dots, R_m$ inductively by the
following procedure: let $N_1 := [\overline{A}(\cdot, x_k'', \dots, x_k '') : 1 \leq k \leq r ] \subset E'$, by the Principle of Local Reflexivity~\ref{localreflexivity}, there exist an operator $R_2 \in
\mathcal{L}(M,E)$ with norm less or equal to $1+\varepsilon$ such that $x'( R_1 (x'') ) = x''(x')$ for $x'' \in M$ and $x' \in
N_1$.
In particular,
$$\overline{A}(R_1(x_k''), x_k'', \dots, x_k '') = x_k''(\overline{A}(\cdot, x_k'', \dots, x_k '')) = \overline{A}(x_k'', x_k'', \dots, x_k ''),$$
for every $1 \leq k \leq r$.

Now, let $N_2:=[\overline{A}(\; \cdot \; , x_k'', \dots, x_k '') : 1 \leq k \leq r ] \oplus [\overline{A}(R_1 x_k'', \; \cdot \;,  x_k'', \dots, x_k '') : 1 \leq k \leq r] \subset E'$.
Again, by the Principle of Local Reflexivity~\ref{localreflexivity}, there exist an operator $R_1 \in
\mathcal{L}(M,E)$ with norm less or equal than $1+\varepsilon$ such that
$$\overline{A}(R_2(x_k''), x_k'', \dots, x_k '') = \overline{A}(x_k'', x_k'', \dots, x_k '')$$ and $$\overline{A}(R_1 (x_k''), R_2 (x_k''), x_k'' \dots, x_k '') = \overline{A}(x_k'', x_k'', \dots, x_k ''),
$$ for all $k=1 \dots, r$.
Proceeding in this way we obtain the stated result.
\end{proof}

\begin{lemma} \label{Aprox}  Let $M \in FIN(E'')$ and $x_1'', \dots, x_r'' \in M$, $p: E \to
\mathbb{K}$ a continuous $n$-homogeneous polynomial and $\varepsilon > 0$. There exist $m \in \mathbb{N}$ and operators $(R_i)_{1\leq i \leq m}$ in
$\mathcal{L}(M,E)$ with norm less or equal than $1+\varepsilon$, satisfying
$$ \big| \sum_{k=1}^r AB(p) \big( x_k''\big) - \sum_{k=1}^r p \big( \frac{1}{m} \sum_{i = 1}^m R_i x_k'' \big) \big| < \varepsilon. $$
\end{lemma}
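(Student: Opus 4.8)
The plan is to follow the averaging argument of Davie and Gamelin, using Lemma~\ref{diferentes indices} to control the ``off-diagonal'' terms. Set $C := \max_{1\le k\le r}\|x_k''\|$ (finite, since $M$ is finite dimensional) and recall that $AB(p)(x_k'') = \overline{A}(x_k'',\dots,x_k'')$, where $A$ is the symmetric $n$-linear form associated to $p$. For any operators $R_1,\dots,R_m\in\mathcal{L}(M,E)$, the $n$-homogeneity of $p$ and the multilinearity of $A$ give
$$
p\Big(\tfrac1m\sum_{i=1}^m R_i x_k''\Big)
= A\Big(\tfrac1m\sum_{i=1}^m R_i x_k'',\dots,\tfrac1m\sum_{i=1}^m R_i x_k''\Big)
= \frac{1}{m^n}\sum_{i_1,\dots,i_n=1}^m A(R_{i_1}x_k'',\dots,R_{i_n}x_k'').
$$
The first step is to split this sum over $(i_1,\dots,i_n)\in\{1,\dots,m\}^n$ into those tuples with pairwise distinct entries and the remaining ones.

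For the pairwise distinct tuples, Lemma~\ref{diferentes indices}, applied with this same $m$, guarantees that each term equals $\overline{A}(x_k'',\dots,x_k'') = AB(p)(x_k'')$, simultaneously for every $k$. There are exactly $m(m-1)\cdots(m-n+1)$ such ordered tuples, so their total contribution is
$$
\theta_m\, AB(p)(x_k''), \qquad \theta_m := \frac{m(m-1)\cdots(m-n+1)}{m^n}=\prod_{j=0}^{n-1}\Big(1-\tfrac{j}{m}\Big).
$$
The decisive point is that $\theta_m\to 1$ as $m\to\infty$, so this diagonal-free part converges to $AB(p)(x_k'')$.

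For the remaining tuples (those having at least one repeated index) I would use no special structure, only the crude estimate $|A(y_1,\dots,y_n)|\le\|A\|\prod_j\|y_j\|$ together with $\|R_{i_j}x_k''\|\le(1+\varepsilon)C$, which bounds each such term by $\|A\|\,((1+\varepsilon)C)^n$. Their number is $m^n - m(m-1)\cdots(m-n+1) = (1-\theta_m)\,m^n$, so after dividing by $m^n$ the whole repeated-index part is at most $(1-\theta_m)\,\|A\|\,((1+\varepsilon)C)^n$, which tends to $0$. Combining the two parts and summing over $k$,
$$
\Big|\sum_{k=1}^r AB(p)(x_k'') - \sum_{k=1}^r p\Big(\tfrac1m\sum_{i=1}^m R_i x_k''\Big)\Big|
\le (1-\theta_m)\Big(\sum_{k=1}^r |AB(p)(x_k'')| + r\,\|A\|\,((1+\varepsilon)C)^n\Big).
$$

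The bracketed quantity on the right is a constant fixed once $p$, $A$, $\varepsilon$ and the $x_k''$ are given. Hence the correct order of quantifiers is to first choose $m$ large enough that $(1-\theta_m)$ times this constant is $<\varepsilon$, and only then invoke Lemma~\ref{diferentes indices} to produce the operators $R_1,\dots,R_m$ of norm $\le 1+\varepsilon$ for that particular $m$. I expect the only genuine subtlety to be precisely this ordering: $m$ must be fixed before the operators, which is exactly the form in which Lemma~\ref{diferentes indices} is stated. The heart of the argument is that Lemma~\ref{diferentes indices} converts every off-diagonal term into an exact copy of $AB(p)(x_k'')$, leaving only a combinatorially vanishing diagonal remainder, uniformly bounded by the operator and polynomial norms, to dispose of.
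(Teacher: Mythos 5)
Your proposal is correct and takes essentially the same route as the paper: the Davie--Gamelin averaging argument, expanding $p\big(\tfrac1m\sum_i R_i x_k''\big)$ via the associated symmetric form, killing the pairwise-distinct tuples exactly with Lemma~\ref{diferentes indices}, and bounding the $(1-\theta_m)m^n$ repeated-index tuples crudely so that their contribution vanishes as $m\to\infty$. The only cosmetic difference is that you keep $AB(p)(x_k'')$ outside the sum over repeated tuples instead of distributing it over all $m^n$ tuples as the paper does, and your explicit remark on the order of quantifiers (fix $m$ first, then produce the $R_i$) matches what the paper does implicitly.
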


\begin{proof} For
$\varepsilon > 0$, fix $m$ large enough and choose $R_1, \dots, R_m$ as
in the previous Lemma, such that
\begin{equation}\label{ecuacionigualdad}
 A( R_{i_1}x_k'', \dots, R_{i_n}x_k'' ) = \overline{A}(x_k'', \dots,
x_k'')
\end{equation}
for every $i_1, \dots, i_n$ distinct indices between $1$ and $m$
and every $k = 1 \dots r$. We have
\begin{align*}
\big| AB(p)(x_k'') - p(\frac{1}{m} \sum_{i = 1}^m R_i x_k'') \big| & = \big| \frac{1}{m^n}
\sum_{i_1, \dots, i_n = 1}^m [\overline{A}(x_k'', \dots, x_k'') -
A(R_{{i_1}}x_k'', \dots, R_{{i_n}}x_k'' )] \big| \\
& \leq \big| \Sigma_1^k \big| + | \Sigma_2^k |,
\end{align*}
where $\Sigma_1^k$ is the sum over the $n$-tuples of non-repeated
indices, which is zero by (\ref{ecuacionigualdad}) and $\Sigma_2^k$ is
the sum over the remaining indices. It is easy to show that there
are exactly $m^n - \prod_{j=0}^{n-1} (m-j)$ summands in
$\Sigma_2^k$, each bounded by a constant $C>0$, which we can
assume independent of $k$. Thus we have
$$\big| \Sigma_2^k  | \leq \frac{1}{m^n} \big (m^n - \prod_{j=0}^{n-1}
(m-j) \big)C= \big[ 1 - (1-\frac{1}{m}) \dots (1 - \frac{n-1}{m})
\big] C.$$ Taking $m$ sufficiently large this is less than
$\varepsilon/{r}$.
\end{proof}

Now we are ready to prove the Extension Lemma~\ref{extension lemma}.

\begin{proof} (of Lemma~\ref{extension lemma})
Let $w \in \otimes^{n,s} M$, where $M \in FIN(E'')$. Since $\alpha$ is finitely generated, we only have to check
that $$| \langle AB(p),w \rangle| \leq \|p\|_{\big(
\widetilde{\otimes}^{n,s}_\alpha E \big)'} \: \alpha(w,
\otimes^{n,s}M).$$ Now write $w = \sum_{k=1}^r \otimes^n x_k''$ with $x_k'' \in
M$. Given $\varepsilon > 0$, by Lemma~\ref{Aprox}, there exist $m \in \mathbb{N}$ and operators $(R_i)_{1\leq i \leq m}$ with
$\|R_i\|_{\mathcal{L}(M,E)} \leq 1+ \varepsilon$ such that $$ \big|
\sum_{k=1}^r AB(p) \big( x_k''\big) - \sum_{k=1}^r p \big(
\frac{1}{m} \sum_{i=1}^m R_i x_k'' \big) \big| < \varepsilon.$$
Therefore,
\begin{align*}
\big| \langle AB(p),w \rangle \big|  & = \big| \sum_{k=1}^r AB(p)
\big( x_k''\big) \big| \leq \big| \sum_{k=1}^r AB(p) \big(
x_k''\big) - \sum_{k=1}^r p \big( \frac{1}{m} \sum_{i=1}^m R_i
x_k'' \big) \big| + \big| \sum_{k=1}^r p \big( \frac{1}{m} \sum_{i=1}^m R_i x_k'' \big) \big| \\
& \leq \varepsilon + \big| \langle p, \sum_{k=1}^r \otimes^n
\frac{1}{m} \sum_{i=1}^m R_i x_k'' \rangle \big| \\
& \leq \varepsilon + \|p\|_{\big( \widetilde{\otimes}^{n,s}_\alpha
E \big)'} \alpha( \sum_{k=1}^r \otimes^n \frac{1}{m} \sum_{i=1}^m R_i x_k'' \: ; \; \otimes^{n,s}E) \\
& \leq \varepsilon + \|p\|_{\big( \widetilde{\otimes}^{n,s}_\alpha
E \big)'} \alpha\big( (\otimes^{n,s}R) (\sum_{k=1}^r  x_k'') \:
; \; \otimes^{n,s}E \big),
\end{align*}
where $R = \frac{1}{m} \sum_{i=1}^m R_i$ (note that
$\|R\|_{\mathcal{L}(M,E)} \leq 1 + \varepsilon$ since each
$\|R_i\|_{\mathcal{L}(M,E)} \leq 1 + \varepsilon$). By the metric mapping
property of $\alpha$ and the previous inequality we get
$$ \big| \langle AB(p),w \rangle  \big| \leq \varepsilon + (1 + \varepsilon)^n \|p\|_{\big( \widetilde{\otimes}^{n,s}_\alpha
E \big)'} \alpha( \sum_{k=1}^r \otimes^n x_k'' \: ; \;
\otimes^{n,s}M),$$ which ends the proof.
\end{proof}

As a consequence of the Extension Lemma \ref{extension lemma} we also obtain a symmetric version of \cite[Lemma~13.3]{DefFlo93}, which shows that there is a natural isometric embedding from the symmetric tensor product of a Banach space and that of its bidual. This lemma
was proved in~\cite{Car-GalPrims} for  finitely generated s-tensor norms.

\begin{lemma}\label{embedding}
\textbf{\emph{(Embedding Lemma.)}} If $\alpha$ is a finitely or cofinitely generated tensor norm, then the natural mapping
\begin{align*}
&  \otimes^{n,s}\K_E: \otimes^{n,s}_{\alpha} E \longrightarrow \otimes^{n,s}_{\alpha} E''
\end{align*}
is an isometry for every normed space $E$.
\end{lemma}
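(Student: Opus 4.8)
The plan is to prove both inequalities that make up the isometry, treating the nontrivial one separately in the two cases permitted by the hypothesis. For any s-tensor norm and any $z \in \otimes^{n,s}E$, the metric mapping property applied to $\K_E : E \to E''$ (which has norm $1$) gives immediately
$\alpha\big((\otimes^{n,s}\K_E)(z);\otimes^{n,s}E''\big) \le \alpha(z;\otimes^{n,s}E)$. Thus the only point to establish is the reverse inequality $\alpha(z;\otimes^{n,s}E) \le \alpha\big((\otimes^{n,s}\K_E)(z);\otimes^{n,s}E''\big)$, and for this I would split according to whether $\alpha$ is finitely or cofinitely generated.

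\emph{Finitely generated case.} Here I would feed the Extension Lemma into a trace-duality argument. By Hahn--Banach, $\alpha(z;\otimes^{n,s}E) = \sup |\langle p,z\rangle|$ over all $p \in \big(\widetilde{\otimes}^{n,s}_\alpha E\big)'$ with $\|p\| \le 1$ (the dual is unchanged by completion). Fixing such a $p$, the Extension Lemma~\ref{extension lemma} gives $AB(p) \in \big(\widetilde{\otimes}^{n,s}_\alpha E''\big)'$ with $\|AB(p)\| = \|p\| \le 1$. Since $AB(p)\circ \K_E = p$ on $E$, evaluating on elementary tensors yields $\langle AB(p),(\otimes^{n,s}\K_E)(z)\rangle = \langle p,z\rangle$, whence $|\langle p,z\rangle| \le \|AB(p)\|\,\alpha\big((\otimes^{n,s}\K_E)(z);\otimes^{n,s}E''\big) \le \alpha\big((\otimes^{n,s}\K_E)(z);\otimes^{n,s}E''\big)$. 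Taking the supremum over $p$ closes this case.

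\emph{Cofinitely generated case.} Duality is not directly at hand, so instead I would use $\alpha = \overleftarrow{\alpha}$ and compare the two cofinite hulls term by term by matching $COFIN(E)$ with $COFIN(E'')$. Write $w = (\otimes^{n,s}\K_E)(z)$. Given $L \in COFIN(E)$, the space $E/L$ is finite dimensional, so $(E/L)'' \overset 1 = E/L$; set $\tilde L := \ker (Q_L^E)'' \in COFIN(E'')$. Naturality of the canonical embedding gives $(Q_L^E)''\circ \K_E = \K_{E/L}\circ Q_L^E = Q_L^E$ under the identification $(E/L)''\overset 1 = E/L$, and since $Q_L^E$ is a metric surjection so is its bidual $(Q_L^E)''$, which therefore realizes $E''/\tilde L \overset 1 = E/L$ and coincides with $Q_{\tilde L}^{E''}$. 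Functoriality of the symmetric tensor power then gives $(\otimes^{n,s}Q_L^E)(z) = (\otimes^{n,s}Q_{\tilde L}^{E''})(w)$, so that $\alpha\big((\otimes^{n,s}Q_L^E)(z);\otimes^{n,s}E/L\big) = \alpha\big((\otimes^{n,s}Q_{\tilde L}^{E''})(w);\otimes^{n,s}E''/\tilde L\big) \le \alpha(w;\otimes^{n,s}E'')$. Taking the supremum over $L \in COFIN(E)$ and invoking $\alpha = \overleftarrow{\alpha}$ gives $\alpha(z;\otimes^{n,s}E) \le \alpha(w;\otimes^{n,s}E'')$, as required.

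I expect the main obstacle to lie in the cofinitely generated case, namely the bookkeeping that turns each $L \in COFIN(E)$ into a genuine $\tilde L \in COFIN(E'')$ and certifies that $(Q_L^E)''$ is a metric surjection compatible with $\K_E$, so the two cofinite hulls can be compared quotient by quotient. The finitely generated case, by contrast, should be essentially immediate once the Extension Lemma is available, which is why the statement is phrased as a consequence of it.
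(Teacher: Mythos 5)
Your proposal is correct and follows essentially the same route as the paper: the easy inequality from the metric mapping property, the Extension Lemma~\ref{extension lemma} combined with Hahn--Banach in the finitely generated case, and the identification of $\ker (Q_L^E)''$ with the biannihilator $L^{00}\in COFIN(E'')$ together with the commuting square $Q_{L^{00}}^{E''}\circ \K_E = \K_{E/L}\circ Q_L^E$ in the cofinitely generated case.
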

\begin{proof}
If $z \in \os E$, by the metric mapping property we have
$$\alpha(\os \K_E (z);\os E'') \leq \alpha(z;\os E).$$
Suppose $\alpha$ is finitely generated and let $p$ a norm one polynomial in $(\os_{\alpha} E)'$ such that $\alpha(z;\os E) =  \langle p, z \rangle$. Now notice that $ \langle p, z \rangle = \langle AB(p), \otimes^{n,s} \K_E z \rangle$ which, by the Extension Lemma~\ref{extension lemma}, is less or equal than $\alpha(\os \K_E (z);\os E'')$. This shows the reverse inequality for finitely generated tensor norms.

Suppose now that $\alpha$ is cofinitely generated and let $L \in COFIN(E)$. Then $L^{00}$ (the biannihilator in $E''$) is in $COFIN(E'')$ and the mapping
$$ \K_{E/L} : E/L \to (E/L)''=E''/{L^{00}}$$ is an isometric isomorphism. Moreover, we have $Q_{L^{00}}^{E''} \circ \K_E = \K_{E/L} \circ Q_{L}^{E}$.

Thus,
\begin{align*}
\alpha (\os Q_{L}^E (z) ; \os E/L) & = \alpha (\os (\K_{F/L} \circ Q_{L}^{E}) (z) ; \os (E/L)'') \\
& =  \alpha ((\os Q_{L^{00}}^{E''} \circ  \os  \K_E) (z) ; \os E''/{L^{00}}) \\
& \leq \alpha (\os  \K_E (z), \os E'').
\end{align*}
If we take supremum over all $L \in COFIN(E)$ we obtain the desired inequality.
\end{proof}

Since $E$ and its completion $\widetilde{E}$ have the same bidual, the Embedding Lemma~\ref{embedding} shows that finitely generated and cofinitely generated s-tensor norms respect dense subspaces. More precisely, we have the following.

\begin{corollary}
Let $\alpha$ be a finitely or cofinitely generated s-tensor norm, $E$ a normed space and $\widetilde{E}$ its completion.  Then,
$$\os_{\alpha} E \to  \os_{\alpha} \widetilde{E}$$
is an isometric and dense embedding.
\end{corollary}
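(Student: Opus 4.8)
The plan is to factor the natural map through the common bidual and invoke the Embedding Lemma~\ref{embedding} twice. First I would record the basic compatibility: since $E$ is dense in $\widetilde E$, the two spaces have the same dual and hence the same bidual, so $(\widetilde E)'' \overset 1 = E''$. Writing $\iota : E \to \widetilde E$ for the canonical dense inclusion, the canonical embeddings into the bidual satisfy $\K_{\widetilde E} \circ \iota = \K_E$ as maps $E \to E''$. Applying the $n$-tensor power functor, this gives the identity $\os \K_{\widetilde E} \circ \os \iota = \os \K_E$ on $\os E$.

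For the isometric statement, fix $z \in \os E$. The Embedding Lemma~\ref{embedding} applied to $E$ (legitimate because $\alpha$ is finitely or cofinitely generated) gives $\alpha(z;\os E) = \alpha(\os \K_E(z);\os E'')$. Rewriting $\os \K_E(z) = \os \K_{\widetilde E}(\os \iota(z))$ and applying the Embedding Lemma~\ref{embedding} to $\widetilde E$, which asserts that $\os \K_{\widetilde E} : \os_\alpha \widetilde E \to \os_\alpha (\widetilde E)'' = \os_\alpha E''$ is an isometry, I obtain $\alpha(\os \K_{\widetilde E}(\os \iota(z));\os E'') = \alpha(\os \iota(z);\os \widetilde E)$. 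Chaining these two equalities yields $\alpha(z;\os E) = \alpha(\os \iota(z);\os \widetilde E)$, which is exactly the claim that $\os \iota : \os_\alpha E \to \os_\alpha \widetilde E$ is an isometry.

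For density I would argue that $\iota$ has dense range and that $x \mapsto \os x$ is continuous from a normed space into its $\alpha$-symmetric tensor product (as used already in the proof of Lemma~\ref{Aprox lema}, since $\alpha \leq \pi_s$). Hence every elementary tensor $\os y$ with $y \in \widetilde E$ is an $\alpha$-limit of elementary tensors $\os x$ with $x \in E$; as finite sums of such elementary tensors are $\alpha$-dense in $\os \widetilde E$, the image of $\os_\alpha E$ is dense in $\os_\alpha \widetilde E$.

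I do not expect a serious obstacle: the entire content is carried by the Embedding Lemma, and the only care needed is the bookkeeping behind the identification $(\widetilde E)'' \overset 1 = E''$ together with the compatibility $\K_{\widetilde E} \circ \iota = \K_E$. The fact that both the finitely and the cofinitely generated cases are covered is automatic, since Lemma~\ref{embedding} already handles both, and the density part is a routine continuity argument.
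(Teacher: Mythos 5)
Your proof is correct and follows exactly the route the paper intends: the paper states the corollary as an immediate consequence of the Embedding Lemma via the observation that $E$ and $\widetilde{E}$ share the same bidual, and your write-up simply makes explicit the factorization $\os \K_{\widetilde E} \circ \os \iota = \os \K_E$ and the two applications of Lemma~\ref{embedding}. The density argument via continuity of $x \mapsto \os x$ is likewise the standard one and is fine.
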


We obtain as a direct consequence the symmetric version of the Density lemma \cite[Lemma 13.4.]{DefFlo93}.

\begin{lemma}\label{density}
\textbf{\emph{(Density Lemma.)}} Let $\alpha$ be a finitely or cofinitely generated tensor norm, $E$ a normed space and $E_0$ a dense subspace of $E$. If $p$ is an $n$-homogeneous continuous polynomial such that
$$ p|_{\os E_0} \in (\os_\alpha E_0)',$$
then $p \in (\os_{\alpha} E )'$ and  $\|p\|_{(\os_{\alpha} E )'} = \|p\|_{(\os_{\alpha} E_0 )'}$.
\end{lemma}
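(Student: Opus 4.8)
The plan is to derive the statement directly from the preceding Corollary, using only the elementary fact that a normed space and a dense subspace of it have the same dual. First I would observe that, since $E_0$ is dense in $E$, the inclusions $E_0 \subseteq E \subseteq \widetilde E$ exhibit $E_0$ as dense in the complete space $\widetilde E$, so the two completions coincide: $\widetilde{E_0} \overset 1 = \widetilde E$.

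Applying the Corollary to $E_0$ and to $E$ produces isometric dense embeddings $\os_\alpha E_0 \overset 1 \hookrightarrow \os_\alpha \widetilde{E_0}$ and $\os_\alpha E \overset 1 \hookrightarrow \os_\alpha \widetilde E$. In view of $\widetilde{E_0}\overset 1 = \widetilde E$, both $\os_\alpha E_0$ and $\os_\alpha E$ sit as isometric dense subspaces of the common space $\os_\alpha \widetilde E$; since $\os E_0 \subseteq \os E$, it follows that the natural map $\os_\alpha E_0 \to \os_\alpha E$ is itself an isometric dense embedding. (Density can also be seen directly: $x \mapsto \os x$ is continuous and $\alpha \le \pi_s$, so $\os E_0$ is already $\alpha$-dense in $\os E$.) Passing to duals, and using that restriction identifies the dual of a dense subspace isometrically with the dual of the ambient space, I obtain
$$(\os_\alpha E_0)' \overset 1 = (\os_\alpha E)',$$
where a functional on $\os_\alpha E$ corresponds to its restriction to $\os E_0$.

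By hypothesis $p|_{\os E_0} \in (\os_\alpha E_0)'$; let $q \in (\os_\alpha E)'$ be the functional corresponding to it under the identification above, so that $q$ restricts to $p|_{\os E_0}$ on $\os E_0$ and $\|q\|_{(\os_\alpha E)'} = \|p\|_{(\os_\alpha E_0)'}$. It remains to check $q = p$. Since $\alpha \le \pi_s$, we have $(\os_\alpha E)' \subseteq (\os_{\pi_s} E)'$, so $q$ is a continuous $n$-homogeneous polynomial on $E$; it agrees with $p$ on the dense subspace $E_0$, hence $q = p$ by continuity. This yields $p \in (\os_\alpha E)'$ with the asserted equality of norms. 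The statement is essentially bookkeeping, and the only point requiring care is precisely this last identification: one must verify that the functional delivered by the duality is genuinely the original polynomial $p$, and not merely some $\alpha$-bounded extension of $p|_{E_0}$. This is guaranteed by the $\pi_s$-density of $\os E_0$ in $\os E$ together with the uniqueness of continuous extensions.
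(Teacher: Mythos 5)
Your argument is correct and follows exactly the route the paper intends: the paper states the Density Lemma as a direct consequence of the preceding Corollary (the isometric dense embedding $\os_{\alpha}E \hookrightarrow \os_{\alpha}\widetilde{E}$), and you have simply spelled out the bookkeeping, including the one point genuinely worth checking, namely that the functional produced by duality is $p$ itself rather than some other $\alpha$-bounded extension of $p|_{E_0}$.
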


\smallskip

Before we state the fifth lemma, we need some definitions.
For $1\leq p \leq \infty$ and $1 \leq \lambda < \infty$ a normed space $E$ is called an $\mathcal{L}_{p,\lambda}^g$\emph{-space}, if for each $M \in FIN(E)$ and $\varepsilon >0$ there are $R \in \mathcal{L}(M,\ell_p^m)$ and $S \in \mathcal{L}(\ell_p^m,E)$ for some $m \in \mathbb{N}$ factoring the embedding $I_M^E$ such that $\|S\| \|R\| \leq \lambda + \varepsilon$:
\begin{equation} \label{facto}
\xymatrix{ M  \ar@{^{(}->}[rr]^{I_E^M} \ar[rd]^{R} & & {E} \\
& {\ell_p^m} \ar[ur]^{S} & }.
\end{equation}
$E$ is called an $\mathcal{L}_{p}^g$\emph{-space} if it is an $\mathcal{L}_{p,\lambda}^g$-space for some $\lambda \geq 1$.
Loosely speaking, $\mathcal{L}_{p}^g$-spaces share many properties of $\ell_p$, since they locally look like $\ell_p^m$. The spaces
$C(K)$ and $L_\infty(\mu)$ are $\mathcal{L}_{\infty,1}^g$-spaces, while $L_p(\mu)$ are $\mathcal{L}_{p,1}^g$-spaces.
For more information and properties of $\mathcal{L}_{p}^g$-spaces see \cite[Section 23]{DefFlo93}.

Now we will state and prove our fifth basic lemma.
\begin{lemma}\label{local technique}
\textbf{\emph{($\mathcal{L}_p$-Local Technique Lemma.)}} Let $\alpha$ and $\beta$ be s-tensor norm and $c \geq 0$ such that
$$ \alpha \leq c \beta \;\;\; \mbox{on}  \;\;\; \os{\ell_p^m},$$
for every $m \in \mathbb{N}$. If $E$ is an $\mathcal{L}_{p,\lambda}^g$ normed space then
$$ \alpha \leq \lambda^n c \overrightarrow{\beta} \;\;\; \mbox{on}  \;\;\; \os{E}.$$

\end{lemma}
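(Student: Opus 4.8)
The plan is to reduce the estimate on all of $\os E$ to the hypothesis on the model spaces $\os{\ell_p^m}$ by exploiting the local factorization \eqref{facto} that defines an $\mathcal{L}_{p,\lambda}^g$-space, together with the metric mapping property and the fact that $\overrightarrow{\beta}$ is finitely generated. Since $\overrightarrow{\beta}$ coincides with $\beta$ on finite dimensional spaces and the right-hand side involves $\overrightarrow{\beta}$, it suffices by the definition of the finite hull to prove the inequality for an arbitrary $z \in \os M$ with $M \in FIN(E)$, and to bound $\alpha(z;\os E)$ in terms of $\beta(z;\os M)$ (up to the factor $\lambda^n c$ and an $\varepsilon$).

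First I would fix $z \in \os M$ and $\varepsilon > 0$. Applying the defining property of an $\mathcal{L}_{p,\lambda}^g$-space to $M$, I obtain $m \in \mathbb{N}$ and operators $R \in \mathcal{L}(M,\ell_p^m)$, $S \in \mathcal{L}(\ell_p^m,E)$ with $S \circ R = I_E^M$ and $\|S\|\,\|R\| \le \lambda + \varepsilon$. The key identity is then that, since $S$ restricted appropriately recovers the inclusion of $M$ into $E$, the tensor $\os \K$-type functoriality gives $(\os S)(\os R)(z) = z$ as an element of $\os E$. The chain of inequalities I expect to write is
\begin{align*}
\alpha(z;\os E) &= \alpha\big( (\os S)(\os R)(z);\os E\big) \\
&\leq \|S\|^n\, \alpha\big( (\os R)(z);\os{\ell_p^m}\big) \\
&\leq \|S\|^n\, c\, \beta\big( (\os R)(z);\os{\ell_p^m}\big) \\
&\leq \|S\|^n\, c\, \|R\|^n\, \beta(z;\os M) \\
&\leq (\lambda+\varepsilon)^n\, c\, \beta(z;\os M).
\end{align*}
Here the first inequality uses the metric mapping property of $\alpha$ applied to $S$, the second uses the hypothesis $\alpha \le c\beta$ on $\os{\ell_p^m}$ (valid since $(\os R)(z) \in \os{\ell_p^m}$), the third uses the metric mapping property of $\beta$ applied to $R$, and the last uses $\|S\|\,\|R\| \le \lambda + \varepsilon$.

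Finally I would let $\varepsilon \to 0$ to conclude $\alpha(z;\os E) \le \lambda^n c\, \beta(z;\os M)$ for every $z \in \os M$ and every $M \in FIN(E)$. Taking the infimum over all $M \in FIN(E)$ containing a fixed $z \in \os E$ and invoking $\overrightarrow{\beta}(z;\os E) = \inf\{\beta(z;\os M) : M \in FIN(E),\ z \in \os M\}$ yields $\alpha \le \lambda^n c\, \overrightarrow{\beta}$ on $\os E$, as desired. I do not anticipate a serious obstacle here; the proof is a clean transcription of the $2$-fold argument, and the only point requiring a little care is keeping track of the $n$-th powers arising from the metric mapping property of the $n$-fold symmetric tensor powers, which is precisely why the factor is $\lambda^n$ rather than $\lambda$. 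One should also note that it is enough to handle $z$ lying in $\os M$ for $M$ finite dimensional, since $\overrightarrow{\beta}$ is finitely generated and the general case follows by the infimum definition.
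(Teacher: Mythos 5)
Your proof is correct and follows essentially the same route as the paper: factor the inclusion $I_E^M$ through $\ell_p^m$, apply the metric mapping property to $S$ and $R$ around the hypothesis on $\os \ell_p^m$, and take the infimum over $M\in FIN(E)$ to produce $\overrightarrow{\beta}$. The only (harmless) cosmetic difference is that you bound $\alpha(z;\os E)$ directly while the paper bounds $\alpha(z;\os M)$; and your exponent $\|S\|^n$ in the first inequality is in fact the correct one (the paper has a small typo there).
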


\begin{proof}
For $M \in FIN(E)$, we take a factorization as in (\ref{facto}) with $\|R\| \|S\| \leq \lambda (1 + \varepsilon)$.
Then, for every $z \in \os M$ we have
\begin{align*}
\alpha(z;\os M) & = \alpha(\os (S \circ R) (z), \os M) \leq \|S\| \alpha(\os R (z), \os \ell_p^m) \\
& \leq \|S\|^n c \beta(\os R (z), \os \ell_p^m) \leq c \|S\|^n \|R\|^n \beta(z;\os M).
\end{align*}
Taking infimum over all finite dimensional subspaces $M$ such that  $z \in \os M$, we obtain $$\alpha \leq \lambda^n c \overrightarrow{\beta},$$as desired.
\end{proof}

\section{Applications to the metric theory of symmetric tensor products}

In this section we present applications of the five basic lemmas to the study of symmetric tensor norms, specifically to their metric properties. 
The first application of the lemmas that we get relates the finite hull of an s-tensor norm with its cofinite hull on $\os E$ when $E$ has the bounded approximation property.
\begin{proposition}\label{coinciden con m.a.p.}
Let $\alpha$ be an s-tensor norm and $E$ be a normed space with the $\lambda$-bounded approximation property. Then
$$ \overleftarrow{\alpha} \leq \alpha \leq \overrightarrow{\alpha} \leq \lambda^n \overleftarrow{\alpha} \; \; \mbox{on} \; \; \os E.$$
In particular, $\overleftarrow{\alpha} = \alpha = \overrightarrow{\alpha}$ on $\os E$ if $E$ has the metric approximation property.
\end{proposition}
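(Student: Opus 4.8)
The plan is to notice that the chain $\overleftarrow{\alpha} \leq \alpha \leq \overrightarrow{\alpha}$ is already in hand: it was recorded in the Preliminaries as a direct consequence of the metric mapping property. Hence the entire content of the proposition reduces to the single estimate $\overrightarrow{\alpha} \leq \lambda^n \overleftarrow{\alpha}$ on $\os E$. This is precisely the kind of domination that the Approximation Lemma~\ref{Aprox lema} is designed to lift from finite dimensional subspaces to the whole space, and the only real idea is to apply that lemma not to $\alpha$ itself but to the \emph{pair of hulls} $(\overrightarrow{\alpha}, \overleftarrow{\alpha})$.

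First I would invoke the structural facts established earlier, namely that both the finite hull $\overrightarrow{\alpha}$ and the cofinite hull $\overleftarrow{\alpha}$ are genuine s-tensor norms (finitely generated and cofinitely generated, respectively), and that each of them agrees with $\alpha$ on the symmetric tensor product of every finite dimensional space. In particular, for every $M \in FIN(E)$ one has $\overrightarrow{\alpha}(\,\cdot\,; \os M) = \alpha(\,\cdot\,; \os M) = \overleftarrow{\alpha}(\,\cdot\,; \os M)$, so trivially $\overrightarrow{\alpha} \leq \overleftarrow{\alpha}$ on $\os M$ with constant $c = 1$; and since this holds for every $M$, it certainly holds for cofinally many.

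Next I would feed this into the Approximation Lemma~\ref{Aprox lema}, taking the dominated norm to be $\overrightarrow{\alpha}$ and the dominating norm to be $\overleftarrow{\alpha}$, with $c = 1$. Because $E$ has the $\lambda$-bounded approximation property and $\overrightarrow{\alpha} \leq \overleftarrow{\alpha}$ on $\os M$ for cofinally many $M \in FIN(E)$, the lemma delivers $\overrightarrow{\alpha} \leq \lambda^n \overleftarrow{\alpha}$ on $\os E$, which is exactly the missing inequality. At this point I would make sure the hypotheses of Lemma~\ref{Aprox lema} really are satisfied for this pair: that $\overleftarrow{\alpha}$ enjoys the metric mapping property (used in the estimate inside its proof) and that $x \mapsto \os x$ is continuous into $\os_{\overrightarrow{\alpha}} E$. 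Both are immediate, since $\overrightarrow{\alpha}$ and $\overleftarrow{\alpha}$ are s-tensor norms and are therefore dominated by $\pi_s$.

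Finally, the ``in particular'' clause is a one-line squeeze: when $E$ has the metric approximation property one takes $\lambda = 1$, so the chain becomes $\overleftarrow{\alpha} \leq \alpha \leq \overrightarrow{\alpha} \leq \overleftarrow{\alpha}$, forcing all three norms to coincide on $\os E$. I do not expect any genuine obstacle here; the only conceptual step is recognizing that the Approximation Lemma should be applied to the hull pair $(\overrightarrow{\alpha}, \overleftarrow{\alpha})$, after which everything is bookkeeping with the already-cited properties of the two hulls.
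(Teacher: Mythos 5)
Your proposal is correct and is essentially the paper's own argument: the authors likewise dispose of the first two inequalities by the general relation $\overleftarrow{\alpha}\leq\alpha\leq\overrightarrow{\alpha}$ and obtain $\overrightarrow{\alpha}\leq\lambda^n\overleftarrow{\alpha}$ by applying the Approximation Lemma~\ref{Aprox lema} to the pair of hulls, using that $\overleftarrow{\alpha}=\alpha=\overrightarrow{\alpha}$ on $\os M$ for every $M\in FIN(E)$. Your extra check that both hulls are genuine s-tensor norms (so the lemma's hypotheses apply) is a sound piece of diligence the paper leaves implicit.
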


\begin{proof}
Is a direct consequence of the Approximation Lemma \ref{Aprox lema} and the fact that $\overleftarrow{\alpha} = \alpha=  \overrightarrow{\alpha}$ on $\os M$ for every $M \in FIN(E)$
\end{proof}
This proposition together with the Embedding Lemma~\ref{extension lemma} give the following corollary, which should be compared to the Embedding Lemma~\ref{embedding}. Note that the assumptions on the s-tensor norm $\alpha$ in the Embedding Lemma are now substituted  by assumptions on the normed space $E$.

\begin{corollary}
Let $\alpha$ be an s-tensor norm and $E$ be a normed space with the metric approximation property. Then \begin{align*}
&  \otimes^{n,s}\K_E: \otimes^{n,s}_{\alpha} E \longrightarrow \otimes^{n,s}_{\alpha} E''
\end{align*}
is an isometry.
\end{corollary}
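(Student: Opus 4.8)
The plan is to prove, for an arbitrary $z \in \os E$, the two inequalities
$$\alpha(\os \K_E(z); \os E'') \le \alpha(z; \os E) \quad\text{and}\quad \alpha(z; \os E) \le \alpha(\os \K_E(z); \os E''),$$
and conclude that $\os \K_E$ preserves the norm, hence is an isometry. The first inequality is immediate: since $\K_E : E \to E''$ has norm one, the metric mapping property of $\alpha$ gives $\alpha(\os \K_E(z); \os E'') \le \|\K_E\|^n \alpha(z; \os E) = \alpha(z; \os E)$. So the whole content lies in the reverse inequality, and this is where I would combine Proposition~\ref{coinciden con m.a.p.} with the Embedding Lemma~\ref{embedding}.

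For the reverse inequality I would work with the cofinite hull $\overleftarrow{\alpha}$ rather than with $\alpha$ itself. Since $E$ has the metric approximation property, Proposition~\ref{coinciden con m.a.p.} yields $\alpha = \overleftarrow{\alpha}$ on $\os E$; in particular $\alpha(z;\os E) = \overleftarrow{\alpha}(z;\os E)$. Now $\overleftarrow{\alpha}$ is a cofinitely generated s-tensor norm, so the Embedding Lemma~\ref{embedding} applies to it and tells us that $\os \K_E$ is an isometry for $\overleftarrow{\alpha}$, that is, $\overleftarrow{\alpha}(z;\os E) = \overleftarrow{\alpha}(\os \K_E(z); \os E'')$. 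Finally, the general inequality $\overleftarrow{\alpha} \le \alpha$ (valid on every space, in particular on $\os E''$) gives $\overleftarrow{\alpha}(\os \K_E(z); \os E'') \le \alpha(\os \K_E(z); \os E'')$. Chaining these steps,
$$\alpha(z;\os E) = \overleftarrow{\alpha}(z;\os E) = \overleftarrow{\alpha}(\os \K_E(z); \os E'') \le \alpha(\os \K_E(z); \os E''),$$
which is exactly the desired reverse inequality; together with the easy direction it yields equality.

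The key point — and the only place where something could go wrong — is the choice of the cofinite hull. One is tempted to use instead the finite hull $\overrightarrow{\alpha}$, which by Proposition~\ref{coinciden con m.a.p.} also equals $\alpha$ on $\os E$ and to which the Embedding Lemma~\ref{embedding} likewise applies; but then the only general inequality available on $\os E''$ is $\alpha \le \overrightarrow{\alpha}$, which points the wrong way and merely reproduces the trivial direction. The cofinite hull works precisely because $\overleftarrow{\alpha} \le \alpha$ matches the direction we need on the bidual, where we have no approximation property to exploit and hence cannot replace $\alpha$ by either of its hulls directly. It is worth emphasizing that no hypothesis on $E''$ is required: every use of the metric approximation property takes place on $\os E$, and the transfer to $\os E''$ is carried entirely by the cofinitely generated norm $\overleftarrow{\alpha}$ through the Embedding Lemma.
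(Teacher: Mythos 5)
Your proof is correct and is essentially identical to the paper's own argument: both establish the easy inequality via the metric mapping property and obtain the reverse one through the chain $\alpha(z;\os E) = \overleftarrow{\alpha}(z;\os E) = \overleftarrow{\alpha}(\os \K_E(z); \os E'') \le \alpha(\os \K_E(z); \os E'')$, using Proposition~\ref{coinciden con m.a.p.} and the Embedding Lemma applied to the cofinitely generated norm $\overleftarrow{\alpha}$. Your closing remark on why the finite hull would point the wrong way is a correct and worthwhile observation, though not part of the paper's proof.
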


\begin{proof}
If $z \in \os E$, by the metric mapping property
$$\alpha (\os \kappa_E z; \os E'') \leq  \alpha (\os z; \os E).$$
On the other hand, since $E$ has the metric mapping property,  Proposition~\ref{coinciden con m.a.p.} asserts that $\alpha = \overleftarrow{\alpha}$  on $\os E$. We then have
$$
\alpha (\os z; \os E) = \overleftarrow{\alpha} (\os z; \os E) = \overleftarrow{\alpha} (\os \kappa_E z; \os E'') \leq \alpha (\os \kappa_E z; \os E''),
$$
where the second equality is due to the Embedding Lemma~\ref{extension lemma} applied to the cofinitely generated s-tensor norm $\overleftarrow{\alpha}$.
\end{proof}

For a finite dimensional space $M$ we always have the isometric isomorphism
\begin{equation}\label{prima para dim finita}
\os_{\alpha}M' \overset 1 = (\os_{\alpha'} M)'.
\end{equation}
The next theorem and its corollary show the behaviour of the mappings in \eqref{defi prima dim finita} and \eqref{prima para dim finita} in the  infinite dimensional framework.

\begin{theorem}\label{duality theorem}
\textbf{\emph{(Duality Theorem.)}}
Let $\alpha$ be an s-tensor norm. For every normed space $E$ the following natural mappings are isometries:
\begin{equation} \label{duality 1}
\os_{\overleftarrow{\alpha}}E \hookrightarrow (\os_{\alpha '} E')',
\end{equation}
\begin{equation}\label{duality 2}
\os_{\overleftarrow{\alpha}}E' \hookrightarrow (\os_{\alpha '} E)'.
\end{equation}

\end{theorem}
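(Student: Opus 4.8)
The plan is to prove, for a fixed tensor, that the operator norm of its image under the natural map coincides with the corresponding cofinite hull. Throughout I will use only three facts: that $\alpha'$ is finitely generated, that $\overleftarrow{\alpha}$ is cofinitely generated, and the finite dimensional duality $\os_\alpha M \overset 1 = (\os_{\alpha'}M')'$, which follows from \eqref{prima para dim finita} applied to $M'$ together with $M''=M$ in $FIN$. Recall the pairing $\langle \os x,\os x'\rangle = x'(x)^n$. I would establish \eqref{duality 1} first, for an arbitrary normed space, and then deduce \eqref{duality 2} by applying \eqref{duality 1} to the space $E'$.

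For \eqref{duality 1}, fix $z\in\os E$; the goal is $\|z\|_{(\os_{\alpha'}E')'} = \overleftarrow{\alpha}(z;\os E)$. For ``$\ge$'', take $L\in COFIN(E)$, set $M=E/L$, and note that the adjoint $j$ of $Q_L^E$ identifies $M'$ isometrically with the annihilator $L^{\perp}\subseteq E'$. For $w$ in the unit ball of $\os_{\alpha'}M'$, the metric mapping property of $\alpha'$ gives $\alpha'(\os j\,w;\os E')\le 1$, while naturality of the pairing gives $\langle \os Q_L^E z,w\rangle = \langle z,\os j\,w\rangle$; taking the supremum over such $w$ and invoking the finite dimensional duality yields $\alpha(\os Q_L^E z;\os M)\le\|z\|_{(\os_{\alpha'}E')'}$, and the supremum over $L$ gives the inequality. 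For ``$\le$'', take $w\in\os E'$ with $\alpha'(w;\os E')\le 1$; since $\alpha'$ is finitely generated there is $N\in FIN(E')$ with $w\in\os N$ and $\alpha'(w;\os N)\le 1+\varepsilon$. Choosing $L:=\{x\in E : x'(x)=0 \text{ for all } x'\in N\}\in COFIN(E)$, the finite dimensionality of $N$ makes it weak$^*$ closed, so $L^{\perp}=N$ and $(E/L)'\overset 1 = N$; the finite dimensional duality on $M=E/L$ then gives $|\langle z,w\rangle| = |\langle \os Q_L^E z,w\rangle| \le \alpha(\os Q_L^E z;\os M)(1+\varepsilon)\le\overleftarrow{\alpha}(z;\os E)(1+\varepsilon)$, and $\varepsilon\to 0$ finishes \eqref{duality 1}.

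To obtain \eqref{duality 2}, apply \eqref{duality 1} with $E$ replaced by $E'$: this already gives $\overleftarrow{\alpha}(z;\os E') = \|z\|_{(\os_{\alpha'}E'')'}$ for $z\in\os E'$, so it remains to prove $\|z\|_{(\os_{\alpha'}E)'} = \|z\|_{(\os_{\alpha'}E'')'}$, that is, that pairing $z$ against the unit balls of $\os_{\alpha'}E$ and of $\os_{\alpha'}E''$ gives the same value. One inequality is immediate from the Embedding Lemma \ref{embedding}: since $\os\K_E:\os_{\alpha'}E\to\os_{\alpha'}E''$ is isometric and $\langle z,w\rangle = \langle z,\os\K_E w\rangle$, any competitor $w$ for the $E$-norm produces a competitor of the same size for the $E''$-norm. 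For the reverse inequality, write $z\in\os N$ with $N\in FIN(E')$ and take $\tilde v\in\os E''$ with $\alpha'(\tilde v;\os E'')\le 1$; by finite generation I may assume $\tilde v\in\os M_0$ with $M_0\in FIN(E'')$ and $\alpha'(\tilde v;\os M_0)\le 1+\varepsilon$. The Principle of Local Reflexivity \ref{localreflexivity}, applied to $M_0$ and $N$, furnishes an $\varepsilon$-isometry $R\in\mathcal{L}(M_0,E)$ with $x'(Rx'')=x''(x')$ for $x''\in M_0$, $x'\in N$. Setting $v:=\os R(\tilde v)\in\os E$, the metric mapping property gives $\alpha'(v;\os E)\le(1+\varepsilon)^{n+1}$, while the reproducing property of $R$ on $N$ yields $\langle z,v\rangle=\langle z,\tilde v\rangle$ exactly; hence $|\langle z,\tilde v\rangle|\le(1+\varepsilon)^{n+1}\|z\|_{(\os_{\alpha'}E)'}$, and $\varepsilon\to 0$ completes the argument.

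The main obstacle I expect is precisely this last step, passing from the pairing over $E''$ back to the pairing over $E$ without losing norm. The Embedding Lemma only controls the direction $\os_{\alpha'}E\hookrightarrow\os_{\alpha'}E''$, so the reverse estimate genuinely requires local reflexivity, and the delicate point is to choose the operator $R$ so that it simultaneously (i) nearly preserves the $\alpha'$-norm of $\tilde v$ via the metric mapping property and (ii) reproduces the action of $E''$ on the finite dimensional space $N$ where $z$ lives, so that the scalar pairing is preserved \emph{exactly} rather than only approximately. Every remaining step is routine bookkeeping with annihilators, the metric mapping property, and the finite dimensional duality.
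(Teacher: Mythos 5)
Your proof is correct. The argument for \eqref{duality 1} is essentially the computation in the paper: both reduce to the identification $FIN(E')=\{L^0:L\in COFIN(E)\}$, the finite dimensional duality $\os_{\alpha}(E/L)\overset 1=(\os_{\alpha'}L^0)'$, and the fact that $\alpha'$ is finitely generated; writing it as two inequalities rather than a chain of equalities changes nothing. For \eqref{duality 2} you genuinely diverge. The paper also factors the map through $(\os_{\alpha'}E'')'$ via \eqref{duality 1} applied to $E'$, but then closes the gap by citing the Extension Lemma~\ref{extension lemma}: the Aron--Berner extension embeds $(\os_{\alpha'}E)'$ isometrically into $(\os_{\alpha'}E'')'$, and the diagram commutes because for a finite-type tensor $z\in\os E'$ the functional it induces on $\os E''$ is exactly $AB(p_z)$. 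You instead prove the needed identity $\|z\|_{(\os_{\alpha'}E)'}=\|z\|_{(\os_{\alpha'}E'')'}$ directly: the easy inequality from the metric mapping property of $\K_E$, and the hard one by pushing a norming tensor $\tilde v\in\os M_0$, $M_0\in FIN(E'')$, down to $E$ with a single operator $R$ from the Principle of Local Reflexivity~\ref{localreflexivity}, chosen to reproduce the action on the finite dimensional $N\subset E'$ carrying $z$, so that the pairing is preserved exactly while $\alpha'$ grows by at most $(1+\varepsilon)^{n+1}$. In effect you reprove the Extension Lemma in the only case the Duality Theorem requires, namely finite-type polynomials, where the Davie--Gamelin averaging over many local-reflexivity operators is unnecessary. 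What this buys is a self-contained proof of Theorem~\ref{duality theorem} that does not depend on the (considerably harder) Extension Lemma; what the paper's route buys is economy, since the Extension Lemma is proved anyway and is needed elsewhere. Both arguments are sound.
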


\begin{proof}
Let us prove that the first mapping is an isometry. Observe that $$FIN(E')=\{L^0 : L \in COFIN(E) \}.$$
Now, by the duality relation for finite dimensional spaces (\ref{prima para dim finita}) we obtain
\begin{align*}
\overleftarrow{\alpha}(z; \os E) & = \sup_{L \in COFIN(E)} \alpha(Q_{L}^E (z) ; \os E/L ) \\
& = \sup_{L \in COFIN(E)} \sup \{ \langle Q_{L}^E (z),u \rangle : \alpha' (u; \os L^0) \leq 1  \} \\
& = \sup \{ \langle Q_{L}^E (z),u \rangle : \overrightarrow{\alpha '} (u; \os E') \leq 1  \} \\
& = \sup \{ \langle Q_{L}^E (z),u \rangle : \alpha ' (u; \os E') \leq 1  \},
\end{align*}
and this shows (\ref{duality 1}).

For the second mapping, note that the following diagram commutes
\begin{equation}\label{diamaccess}
\xymatrix{
{\os_{\overleftarrow{\alpha}} E'} \ar@{^{(}->}[rr]^{1}\ar[rrd] & & {(\os_{\alpha'} E'')'} & {\ni AB(p)}\\
& & {(\os_{\alpha '} E)'} \ar@{^{(}->}[u]^{1} & {\ni p} \ar@{~>}[u]
}.
\end{equation}
Then, the Extension Lemma~\ref{extension lemma} gives the isometry $\os_{\overleftarrow{\alpha}}E' \hookrightarrow (\os_{\alpha '} E)'$, so we have $(\ref{duality 2}$).
\end{proof}

\begin{corollary}
Let $\alpha$ be an s-tensor norm. For every normed space the mappings
\begin{eqnarray}
\label{sin prima} \os_{\alpha}E &\hookrightarrow& (\os_{\alpha '} E')' \\ \label{con prima} \os_{\alpha}E' &\hookrightarrow& (\os_{\alpha '} E)'
\end{eqnarray}
are continuous and has norm one.
If $E$ (or equivalently, $E'$) has the metric approximation property or $\alpha$ is cofinitely generated, then the mapping are isometries.
\end{corollary}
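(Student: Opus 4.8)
The plan is to obtain both maps as compositions of the isometries already supplied by the Duality Theorem~\ref{duality theorem} with a norm-one change of norm on the source. Indeed, the maps in \eqref{sin prima} and \eqref{con prima} are induced by exactly the same algebraic pairings $\os E \to (\os_{\alpha'} E')'$ and $\os E' \to (\os_{\alpha'} E)'$ that underlie \eqref{duality 1} and \eqref{duality 2}; the only difference is that the source now carries $\alpha$ rather than $\overleftarrow{\alpha}$.

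The first thing I would use is the universal inequality $\overleftarrow{\alpha}\le\alpha$ recorded in the Preliminaries. It says precisely that the algebraic identity $\os_{\alpha}E\to\os_{\overleftarrow{\alpha}}E$ has norm at most one, so that \eqref{sin prima} factors as
$$\os_{\alpha}E \longrightarrow \os_{\overleftarrow{\alpha}}E \overset 1 \hookrightarrow (\os_{\alpha'}E')',$$
the second arrow being the isometry \eqref{duality 1}; thus \eqref{sin prima} is continuous with norm at most one, and the identical argument with $E$ replaced by $E'$ handles \eqref{con prima} through \eqref{duality 2}. To upgrade this to norm exactly one I would test on an elementary tensor $\os x$ with $\|x\|=1$: from $\varepsilon_s\le\overleftarrow{\alpha}\le\alpha\le\pi_s$ and $\varepsilon_s(\os x)=\pi_s(\os x)=\|x\|^{n}=1$ the two norms agree on $\os x$, so the map preserves norm there and the operator norm is one.

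For the isometry statements the whole point is to detect when $\alpha=\overleftarrow{\alpha}$ on the relevant tensor product, since then the factorization above collapses onto the Duality Theorem. If $\alpha$ is cofinitely generated then $\alpha=\overleftarrow{\alpha}$ on every $\os E$ by uniqueness of the cofinite hull, so \eqref{sin prima} and \eqref{con prima} are literally the isometries \eqref{duality 1} and \eqref{duality 2}. If instead the source space has the metric approximation property, Proposition~\ref{coinciden con m.a.p.} (with $\lambda=1$) yields $\overleftarrow{\alpha}=\alpha$ there, again reducing each map to the corresponding isometry.

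The one delicate point, and the step I would treat most carefully, is the bookkeeping of which space must carry the approximation property: \eqref{sin prima} needs $\alpha=\overleftarrow{\alpha}$ on $\os E$, hence the metric approximation property of $E$, whereas \eqref{con prima} needs it on $\os E'$, hence that of $E'$. To make a single hypothesis serve both maps I would invoke the classical fact that the metric approximation property of $E'$ forces it on $E$; assuming it on $E'$ then renders both \eqref{sin prima} and \eqref{con prima} isometric at once, which is the sense in which the parenthetical ``or equivalently, $E'$'' should be read. The cofinitely generated case requires no such care, being a condition on $\alpha$ alone.
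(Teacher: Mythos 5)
Your argument is correct and follows essentially the same route as the paper: continuity and the norm-one bound come from $\overleftarrow{\alpha}\le\alpha$ composed with the isometries of the Duality Theorem~\ref{duality theorem}, and the isometry cases come from $\alpha=\overleftarrow{\alpha}$ on the source, via Proposition~\ref{coinciden con m.a.p.} (for the metric approximation property) or by definition (for cofinitely generated $\alpha$). You actually supply two details the paper leaves implicit --- the elementary-tensor computation showing the operator norm is exactly one, and the observation that one should assume the metric approximation property on $E'$ (which passes to $E$) so that a single hypothesis covers both \eqref{sin prima} and \eqref{con prima} --- both of which are correct and tighten the paper's rather terse proof.
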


\begin{proof}
Since $\overleftarrow{\alpha} \leq \alpha$, continuity and that the norm of both mappings is one follow from the Duality Theorem~\ref{duality theorem}. If $E$ has the metric approximation property, then $\overleftarrow{\alpha} = \alpha$ on $\os E$ and on $\os E'$ by Proposition \ref{coinciden con m.a.p.}, so the conclusion follows again from the Duality Theorem.
\end{proof}

The isometry \eqref{con prima} for the case of $E'$ having the metric approximation property  can also be obtained also from \cite[Corrollary 5.2 and Proposition 7.5]{Flo01}. Note also that if $E$ (respectively, $E'$) has the $\lambda$-approximation property, then the mapping \eqref{sin prima} (respectively, \eqref{con prima}) is an isomorphism onto its range.

\bigskip

Let $\alpha$ be an s-tensor norm of order $n$.
We will say that $\alpha$ is \emph{projective} if, for every metric surjection $Q: E \overset 1 \twoheadrightarrow F$, the tensor product operator
$$ \os Q : \os_\alpha E \to \os F$$
is also a metric surjection. 
On the other hand we will say that $\alpha$ is \emph{injective} if, for every  $I: E \overset 1 \hookrightarrow F$ isometric embedding, the tensor product operator
$$ \os I : \os_\alpha E \to \os_\alpha F,$$
is an isometric embedding.

The two extreme s-tensor norms, $\pi_s$ and $\varepsilon_s$, are examples of the last two definition: $\pi_s$ is projective and $\varepsilon_s$ is injective.

We will now define the projective and injective associates of an s-tensor norm. \emph{The projective associate of} $\alpha$, denoted by $\s \alpha /$, will be the (unique) smallest projective s-tensor norm greater than $\alpha$. We can explicitly define it as
$$ \otimes^{n,s} Q_E \colon \otimes^{n,s}_\alpha \ell_1(E) \overset 1 \twoheadrightarrow  \otimes^{n,s}_{\s \alpha /}  E,$$  where $Q_E : \ell_1(B_E) \twoheadrightarrow E$ is the canonical quotient mapping defined in (\ref{canonical quotient}). That the s-tensor norm so defined is the smallest projective s-tensor norm greater than $\alpha$ follows as in  \cite[Theorem 20.6.]{DefFlo93}.

\emph{The injective associate of }$\alpha$, denote by $ / \alpha \s $, will be the (unique) greatest injective s-tensor norm smaller than $\alpha$.  As in \cite[Theorem 20.7.]{DefFlo93} we can describe it explicitly as
$$ \otimes^{n,s} I_E \colon  \otimes^{n,s}_{/ \alpha \s} E  \overset 1 \hookrightarrow  \otimes^{n,s}_\alpha \ell_{\infty}(B_{E'}),$$  where $I_E$ is the canonical embedding (\ref{canonical embedding}).

An s-tensor norm that appears in the literature that comes from this construction is the norm $\eta$ \cite{KirRy98,Carando99}, which coincides with $/ \pi_s \s$. This norm is the predual s-tensor norm of the ideal of extendible polynomials $\P_e$ (a definition is given in the next section).

The next result shows that an s-tensor norm coincides with its projective associate on the symmetric tensor product of  $\ell_1(I)$, where $I$ is any index set.
\begin{proposition} \label{coindicen en l1}
Let $\alpha$ be an s-tensor norm, then
$$\alpha = \s \alpha / \; \text{on} \; \os \ell_1(I),$$
for every index set $I$.
\end{proposition}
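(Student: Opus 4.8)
Set $E := \ell_1(I)$. The inequality $\alpha \le \s \alpha /$ holds on every normed space by the very definition of the projective associate (it is the smallest projective s-tensor norm greater than $\alpha$), so the whole content is the reverse inequality $\s \alpha / \le \alpha$ on $\os E$. The plan is to observe that, although the defining metric surjection $\os Q_E \colon \os_\alpha \ell_1(B_E) \twoheadrightarrow \os_{\s \alpha /} E$ in general forces a lifting to raise the $\alpha$-norm, for $E = \ell_1(I)$ this quotient map admits an \emph{isometric} linear lifting, so that passing to the quotient costs nothing.

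First I would build the lifting. Let $(e_i)_{i \in I}$ be the canonical basis of $E = \ell_1(I)$; each $e_i$ has norm one, hence $e_i \in B_E$, and there is an associated canonical basis vector $\delta_{e_i}$ of $\ell_1(B_E)$. Define $S \colon E \to \ell_1(B_E)$ on the basis by $S(e_i) = \delta_{e_i}$ and extend by linearity and continuity. Since the points $e_i$ are pairwise distinct in $B_E$, one has $\|S(\sum_i a_i e_i)\|_{\ell_1(B_E)} = \sum_i |a_i| = \|\sum_i a_i e_i\|_E$, so $S$ is an isometry; and by the definition (\ref{canonical quotient}) of the canonical quotient one has $Q_E(\delta_{e_i}) = e_i$, whence $Q_E \circ S = Id_E$.

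Then I would assemble the estimate. Taking symmetric tensor powers and using functoriality, $\os Q_E \circ \os S = \os(Q_E \circ S) = Id$ on $\os E$. Fix $z \in \os E$. Since $\os S(z)$ is a preimage of $z$ under the metric surjection $\os Q_E \colon \os_\alpha \ell_1(B_E) \twoheadrightarrow \os_{\s \alpha /} E$, the defining property of a metric surjection gives
$$\s \alpha /(z; \os E) \le \alpha\big(\os S(z); \os \ell_1(B_E)\big),$$
and the metric mapping property of $\alpha$ together with $\|S\| = 1$ yields $\alpha(\os S(z); \os \ell_1(B_E)) \le \|S\|^n\, \alpha(z; \os E) = \alpha(z; \os E)$. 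Chaining these inequalities gives $\s \alpha / \le \alpha$ on $\os E$, and with the trivial reverse inequality we conclude $\alpha = \s \alpha /$ on $\os \ell_1(I)$.

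The only substantive point is the existence of an isometric (rather than merely $(1+\varepsilon)$) lifting of the canonical quotient, and this is the special feature of $\ell_1(I)$: its basis vectors already sit inside $B_E$, which permits a basis-to-basis lifting of $Q_E$ with no norm loss. For a general space one can only lift up to a factor $1+\varepsilon$, which is precisely why $\alpha$ and $\s \alpha /$ need not agree outside $\ell_1$-type spaces; so I do not expect any genuine obstacle beyond making this lifting explicit.
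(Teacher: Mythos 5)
Your proof is correct and follows essentially the same route as the paper: both arguments lift the identity of $\ell_1(I)$ through the canonical quotient $Q_E \colon \ell_1(B_E) \twoheadrightarrow \ell_1(I)$ and then apply the metric mapping property of $\alpha$ together with the metric-surjection description of $\s \alpha /$. The only (harmless) difference is that the paper invokes the abstract projectivity of $\ell_1(I)$ to obtain a lifting of norm $\le 1+\varepsilon$ and lets $\varepsilon \to 0$, whereas your explicit basis-to-basis lifting $e_i \mapsto \delta_{e_i}$ is already an isometry and dispenses with the $\varepsilon$ altogether.
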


\begin{proof}
Let $\Q : \ell_1\big(B_{\ell_1(I)} \big) \overset 1 \twoheadrightarrow \ell_1(I)$ the natural quotient mapping.
Since $\ell_1(I)$ is projective then there is a lifting $T : \ell_1(I) \to \ell_1\big( B_{\ell_1(I)} \big)$ of $id_{\ell_1(I)}$ (i.e. $Q \circ T = id_{\ell_1(I)}$) having norm less or equal than $\|T\| \leq 1 + \varepsilon$.
Thus, by the diagram
\begin{equation*}
\xymatrix{
{\os_{\alpha}\ell_1(I)}  \ar@{-->}[rr]^{id} \ar[rrd]^{\os T}  & & {\os_{\s \alpha /} \ell_1(I)} \\
& & {\os \ell_1\big(B_{\ell_1(I)}\big)} \ar@{->>}[u]^{\os Q}
},
\end{equation*}
we have $\s \alpha / \leq (q + \varepsilon) \alpha$. Since $\alpha \leq \s \alpha /$ always holds, we have the desired equality.
\end{proof}

A Banach space space $E$ is called \emph{injective} if for every Banach space $F$, every subspace $G \subset F$ and every $T \in \mathcal{L}(G,E)$ there is an extension $\widetilde{T} \in \mathcal{L}(F,E)$ of $T$. The space $E$ has the \emph{$\lambda$-extension property} if there is a constant $\lambda \geq 1$ such that $\|\widetilde{T}\| \leq \lambda \|T\|$.
It is easy to prove that every Banach space with the $\lambda$-extension property is $\lambda$-complemented in $\ell_{\infty}(B_{E'})$. Using this fact we therefore have the following.

\begin{proposition}\label{coindicen en inft}
Let $\alpha$ be an s-tensor norm and $E$ be a Banach space with the $\lambda$-extension property, then
$$ / \alpha \s \leq \alpha \leq \lambda^n / \alpha \s \; \text{on} \; \os E.$$
In particular $$\alpha = / \alpha \s \; \text{on} \; \os \ell_\infty(I),$$
for every index set $I$.
\end{proposition}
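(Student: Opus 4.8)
The plan is to exploit the explicit description of the injective associate together with the complementation afforded by the $\lambda$-extension property. The left inequality $/\alpha\s \leq \alpha$ requires no work: by construction $/\alpha\s$ is the greatest injective s-tensor norm smaller than $\alpha$, so it is dominated by $\alpha$ on every $\os E$. The content is therefore entirely in the reverse estimate $\alpha \leq \lambda^n /\alpha\s$.

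To obtain it, I would first record the fact quoted just before the statement: since $E$ has the $\lambda$-extension property, it is $\lambda$-complemented in $\ell_\infty(B_{E'})$. Using that the canonical embedding $I_E : E \to \ell_\infty(B_{E'})$ is isometric, this means there is an operator $P \in \mathcal{L}(\ell_\infty(B_{E'}),E)$ with $P \circ I_E = Id_E$ and $\|P\| \leq \lambda$. Next I would invoke the defining property of the injective associate, namely that
$$\os I_E : \os_{/\alpha\s} E \overset 1 \hookrightarrow \os_\alpha \ell_\infty(B_{E'})$$
is an isometry; this says precisely that $/\alpha\s(z; \os E) = \alpha(\os I_E(z); \os \ell_\infty(B_{E'}))$ for every $z \in \os E$.

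Now I would fix $z \in \os E$. Since $\os P \circ \os I_E = \os(P \circ I_E) = \os Id_E$ is the identity on $\os E$, we may write $z = \os P(\os I_E(z))$, and applying the metric mapping property of $\alpha$ to the operator $P$ yields
$$\alpha(z; \os E) = \alpha\big(\os P(\os I_E(z)); \os E\big) \leq \|P\|^n\, \alpha\big(\os I_E(z); \os \ell_\infty(B_{E'})\big) \leq \lambda^n\, /\alpha\s(z; \os E),$$
which is the desired estimate. For the final assertion, I would observe that $\ell_\infty(I)$ is $1$-injective (bounded operators into $\ell_\infty(I)$ extend without increasing the norm, by applying Hahn--Banach coordinatewise), hence has the $1$-extension property; taking $\lambda = 1$ in the two-sided inequality forces $\alpha = /\alpha\s$ on $\os \ell_\infty(I)$.

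There is no serious obstacle here: the whole argument is a clean application of the metric mapping property to a norm-$\lambda$ left inverse of $I_E$, combined with the explicit embedding that \emph{defines} $/\alpha\s$. The only point requiring a little care is to extract from the abstract $\lambda$-extension property a genuine operator $P$ left-inverting $I_E$ with $\|P\| \leq \lambda$, which is exactly the $\lambda$-complementation statement recalled before the proposition; once that is in hand, the functoriality $\os P \circ \os I_E = \os Id_E$ and condition $(2)$ of an s-tensor norm do the rest.
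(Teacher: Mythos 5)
Your proof is correct and is essentially the argument the paper intends: the paper omits the details, merely recording beforehand that the $\lambda$-extension property makes $E$ $\lambda$-complemented in $\ell_\infty(B_{E'})$, and your norm-$\lambda$ left inverse $P$ of $I_E$ combined with the defining isometric embedding $\os I_E\colon \os_{/\alpha\s}E \overset{1}{\hookrightarrow} \os_\alpha \ell_\infty(B_{E'})$ and the metric mapping property is exactly the intended completion. The trivial inequality $/\alpha\s\le\alpha$ and the specialization to $\lambda=1$ for $\ell_\infty(I)$ are also handled as the paper would.
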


A particular but crucial case of Proposition~\ref{coindicen en l1} and Proposition~\ref{coindicen en inft} is obtained with $I$ a finite set. In this we obtain for every s-tensor norm $\alpha$ and $m \in \mathbb{N}$,
$$ \alpha = \s \alpha / \; \text{on} \; \os \ell_1^m,$$
$$ \alpha  =  / \alpha \s \; \text{on} \; \os \ell_\infty^m .$$
The previous equalities allow us to use  $\mathcal{L}_p$-Local Technique Lemma~\ref{local technique} to give the following.

\begin{corollary} \label{Corolario con las normas en L_p spaces}
Let $\alpha$ an s-tensor norm
\begin{enumerate}
\item If $E$ is $\mathcal{L}_{1,\lambda}^g$-space, then
$$ \alpha \leq \s \alpha / \leq \lambda^n \overrightarrow{\alpha }\; \; \; \mbox{on}  \; \;\os E.$$
\item If $E$ is $\mathcal{L}_{\infty,\lambda}^g$-space, then
$$ \alpha \leq  / \alpha \s  \leq \lambda^n \overrightarrow{\alpha}  \; \; \; \mbox{on}  \; \; \os E.$$
\end{enumerate}
\end{corollary}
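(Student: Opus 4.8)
The plan is to read both chains off the $\mathcal{L}_p$-Local Technique Lemma (Lemma~\ref{local technique}), fed with the two finite-dimensional identities isolated immediately before the corollary: for every $m\in\mathbb N$ one has $\alpha=\s\alpha/$ on $\os\ell_1^m$ and $\alpha=/\alpha\s$ on $\os\ell_\infty^m$. In both cases a finite-dimensional coincidence between $\alpha$ and one of its associates is transported to $\os E$, the factorization of the $\mathcal{L}_p^g$-space paying for the constant $\lambda^n$ and the finite hull $\overrightarrow{\phantom{x}}$.

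For (1), I would run Lemma~\ref{local technique} with $p=1$ and $c=1$ on the pair $(\s\alpha/,\alpha)$: the identity $\s\alpha/=\alpha$ on $\os\ell_1^m$ supplies the hypothesis $\s\alpha/\le\alpha$ there, so for an $\mathcal{L}_{1,\lambda}^g$-space $E$ the lemma returns $\s\alpha/\le\lambda^n\overrightarrow\alpha$ on $\os E$. Since $\alpha\le\s\alpha/$ holds for every s-tensor norm (the projective associate being the least projective s-tensor norm dominating $\alpha$), splicing the two yields the stated chain $\alpha\le\s\alpha/\le\lambda^n\overrightarrow\alpha$.

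For (2) I would mimic this recipe with $\ell_\infty$ in place of $\ell_1$ and the injective associate in place of the projective one, invoking $\alpha=/\alpha\s$ on $\os\ell_\infty^m$ together with Lemma~\ref{local technique} for $p=\infty$. The delicate point --- and the main obstacle --- is the direction of the first inequality in the printed chain $\alpha\le/\alpha\s\le\lambda^n\overrightarrow\alpha$. Its second inequality $/\alpha\s\le\lambda^n\overrightarrow\alpha$ is immediate, since $/\alpha\s\le\alpha\le\overrightarrow\alpha$ always. The first is more subtle: by construction $/\alpha\s$ is the \emph{greatest} injective s-tensor norm \emph{below} $\alpha$, so $/\alpha\s\le\alpha$ holds automatically and the substantive content must be a bound on $\alpha$ from above. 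Applying Lemma~\ref{local technique} to the pair $(\alpha,/\alpha\s)$ transports the equality $\alpha=/\alpha\s$ off the spaces $\os\ell_\infty^m$ to the bound $\alpha\le\lambda^n\overrightarrow{/\alpha\s}$ on $\os E$; this is the exact shape the first inequality acquires once the unavoidable factor $\lambda^n$ and the finite hull are accounted for, and together with $/\alpha\s\le\alpha$ it produces the genuine $\mathcal{L}_\infty$-counterpart $/\alpha\s\le\alpha\le\lambda^n\overrightarrow{/\alpha\s}$ of (1). The only real computation I would carry out is the bookkeeping of the factorization $M\hookrightarrow\ell_\infty^m\to E$ through the metric mapping property, verifying that the factor norms contribute precisely $\lambda^n$ after the $n$-th power and that the infimum over $M\in FIN(E)$ installs the finite hull on the correct side.
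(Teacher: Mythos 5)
Your proof of (1) is exactly the paper's argument: the corollary appears there without a separate proof, as an immediate application of the $\mathcal{L}_p$-Local Technique Lemma~\ref{local technique} (with $c=1$) to the finite-dimensional identities $\alpha = \s \alpha /$ on $\os \ell_1^m$ and $\alpha = / \alpha \s$ on $\os \ell_\infty^m$, which are Propositions~\ref{coindicen en l1} and~\ref{coindicen en inft} with $I$ finite. Your reading of (2) is also the right one, and your suspicion about the printed chain is justified: since $/ \alpha \s \le \alpha$ holds for every s-tensor norm, the printed first inequality $\alpha \le / \alpha \s$ would force the isometric equality $\alpha = / \alpha \s$ on $\os E$ for every $\mathcal{L}_{\infty,\lambda}^g$-space with no dependence on $\lambda$ whatsoever; this cannot come out of the Local Technique Lemma, and it is not credible as stated, because the class of $\mathcal{L}_{\infty,\lambda}^g$-spaces with $\lambda$ large is stable under renormings (any space $\lambda'$-isomorphic to an $\mathcal{L}_{\infty,\lambda}^g$-space is $\mathcal{L}_{\infty,\lambda\lambda'}^g$), while an isometric identity like $\alpha = / \alpha \s$ does not survive such distortions. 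What the lemma actually yields, applied with $p=\infty$, $c=1$ to the pair $(\alpha, / \alpha \s)$, is $\alpha \le \lambda^n \overrightarrow{/ \alpha \s}$ on $\os E$, so the correct form of (2) is your chain $/ \alpha \s \le \alpha \le \lambda^n \overrightarrow{/ \alpha \s}$; the printed version is a misprint in which the two norms (and the side carrying the constant and the finite hull) have been interchanged. As a sanity check, once Proposition~\ref{relaciones felchita con palito} is available one has $\overrightarrow{/ \alpha \s} = / \alpha \s$, and your chain becomes $/ \alpha \s \le \alpha \le \lambda^n \, / \alpha \s$, which is precisely the extension of Proposition~\ref{coindicen en inft} from spaces with the $\lambda$-extension property to $\mathcal{L}_{\infty,\lambda}^g$-spaces, in perfect analogy with (1).
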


The next result show the relation between finite hulls, cofinite hulls, projective associates, injective associates and duality.

\begin{proposition} \label{relaciones felchita con palito}
For an s-tensor norm $\alpha$ we have the following relations:
\begin{enumerate}
\item $/ \alpha \s = / (\overrightarrow{\alpha}) \s = \overrightarrow{/ \alpha \s}$,
\item $/ \alpha \s = / (\overleftarrow{\alpha}) \s = \overleftarrow{/ \alpha \s}$,
\item $\s \alpha / = \s (\overrightarrow{\alpha}) / = \overrightarrow{\s \alpha /}$,
\item $\s \alpha / = \s (\overleftarrow{\alpha}) /$,
\item $(\s \alpha /)' = / \alpha' \s$ and $(/ \alpha \s)' = \s \alpha' /$.
\end{enumerate}
\end{proposition}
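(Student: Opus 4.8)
The plan is to prove the five groups of identities by combining two facts. First, every hull of $\alpha$ collapses onto $\alpha$ on spaces with the metric approximation property, so that the associates of $\alpha$ --- being computed through the canonical maps into $\ell_1(B_E)$ and $\ell_\infty(B_{E'})$ --- cannot distinguish $\alpha$ from $\overrightarrow{\alpha}$ or $\overleftarrow{\alpha}$. Second, a finitely (resp. cofinitely) generated s-tensor norm is uniquely determined by its restriction to $FIN$. Concretely, $\ell_1(B_E)$ and $\ell_\infty(B_{E'})$ both carry the metric approximation property (the first has a monotone basis, the second is a $\mathcal{C}(K)$-space), so Proposition~\ref{coinciden con m.a.p.} yields $\overleftarrow{\alpha}=\alpha=\overrightarrow{\alpha}$ on $\os \ell_1(B_E)$ and on $\os \ell_\infty(B_{E'})$. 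Reading the defining diagrams, $\s \alpha /$ is the quotient norm induced on $\os E$ from $\os_\alpha \ell_1(B_E)$ and $/ \alpha \s$ is the subspace norm induced on $\os E$ from $\os_\alpha \ell_\infty(B_{E'})$; replacing $\alpha$ by $\overrightarrow{\alpha}$ or $\overleftarrow{\alpha}$ in these diagrams changes nothing. This immediately gives the first equalities in (1)--(4):
\[ \s \alpha / = \s (\overrightarrow{\alpha}) / = \s (\overleftarrow{\alpha}) /, \qquad / \alpha \s = / (\overrightarrow{\alpha}) \s = / (\overleftarrow{\alpha}) \s. \]

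Next I would prove the duality identities (5), from which the remaining finite-hull equalities fall out formally. Since both members of each identity in (5) are finitely generated --- the left-hand sides because they are dual norms, the right-hand sides because the injective (resp. projective) associate of a finitely generated norm is again finitely generated (a structural lemma I would establish first, along the lines of \cite[20.6, 20.7]{DefFlo93}) --- it suffices to check (5) on $FIN$. There one uses the finite-dimensional duality \eqref{prima para dim finita} together with the fact that on $FIN$ the adjoint operation interchanges projective and injective s-tensor norms: dualizing a metric surjection $Q$ gives an isometric embedding $Q'$, and under \eqref{prima para dim finita} one has $(\os_\beta Q)' = \os_{\beta'} Q'$, so $\beta$ is projective iff $\beta'$ is injective. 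Because dualization is an order-reversing involution on $FIN$, the smallest projective norm above $\alpha$ dualizes to the largest injective norm below $\alpha'$, i.e. $(\s \alpha /)' = / \alpha' \s$, and symmetrically $(/ \alpha \s)' = \s \alpha' /$ on $FIN$; being finitely generated, these persist on every normed space.

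With (5) in hand, the finite-hull equalities in (1) and (3) are automatic from the identity $\beta'' = \overrightarrow{\beta}$ (equation~\eqref{bidual de una norma tensorial}). Indeed,
\[ \overrightarrow{\s \alpha /} = (\s \alpha /)'' = (/ \alpha' \s)' = \s (\alpha')' / = \s \overrightarrow{\alpha} / = \s \alpha /, \]
using (5) twice, $\alpha'' = \overrightarrow{\alpha}$, and the middle equality from the first step; the same computation with the roles of the projective and injective associate exchanged gives $\overrightarrow{/ \alpha \s} = / \alpha \s$. In particular both associates are finitely generated, which completes (1), (3) and (4).

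It remains to prove the cofinite-hull equality $\overleftarrow{/ \alpha \s} = / \alpha \s$ of (2), and this I expect to be the main obstacle, since cofinite generation is not visible through dualization (all dual norms are finitely generated, so the trick of the previous paragraph is unavailable). The approach is to show that the injective associate of a cofinitely generated norm is cofinitely generated; applied to $\overleftarrow{\alpha}$ and combined with $/ \alpha \s = / (\overleftarrow{\alpha}) \s$ from the first step, this forces $/ \alpha \s$ to be cofinitely generated, whence $\overleftarrow{/ \alpha \s} = / \alpha \s$. To prove that claim I would use the injectivity of $/ \alpha \s$ and Proposition~\ref{coindicen en inft} (so that $\alpha = / \alpha \s$ on $\os \ell_\infty(B_{E'})$, where $\alpha$ is already cofinitely generated by the metric approximation property), and then transport the supremum over $COFIN(\ell_\infty(B_{E'}))$ down to a supremum over $COFIN(E)$ through the canonical embedding $I_E$: each finite-dimensional quotient of $\ell_\infty(B_{E'})$ induces, by pulling back along $I_E$, a finite-codimensional subspace of $E$, and injectivity of $/ \alpha \s$ lets one compare the two resulting norms. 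Matching these two families of quotients compatibly is the delicate point of the whole proposition.
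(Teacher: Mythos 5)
Your proposal reorganizes the proof (duality identity (5) first, finite-hull identities derived from it afterwards), and the first equalities in (1)--(4) via the metric approximation property of $\ell_1(B_E)$ and $\ell_\infty(B_{E'})$ are fine --- that is exactly how the paper proves its Lemma~\ref{lemma con palitos}. But the two load-bearing steps have genuine gaps. For (5), your argument is that dualization is an order-reversing involution exchanging projective and injective norms, so the smallest projective norm above $\alpha$ must dualize to the largest injective norm below $\alpha'$. To run this universal-property argument you need (i) that $(\s \alpha /)'$ is injective, and (ii) that any injective $\gamma\le\alpha'$ satisfies $\gamma\le(\s\alpha/)'$, which forces you to dualize $\gamma$ and know that $\gamma'$ is projective. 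Your justification of the swap lives entirely on $FIN$ (adjoints of metric surjections between finite-dimensional spaces), but projectivity and injectivity are global properties; passing from ``injective/projective on $FIN$'' to the global notion is precisely the nontrivial point, and the paper handles it for (i) by an explicit $M+N$ argument ($\os i' :\os_{\s\alpha/}S'\twoheadrightarrow\os_{\s\alpha/}M'$ and its adjoint). Worse, ``dual of injective is projective'' is derived in the paper (Corollary~\ref{observacionespiolas}) \emph{from} item (5), so using it as an ingredient of (5) is circular unless you supply an independent proof, which you do not. The paper avoids all of this by comparing $(\s\alpha/)'$ and $/\alpha'\s$ concretely on $\os\ell_\infty^m$ (where both reduce to $\os_{\alpha'}\ell_\infty^m$ by finite-dimensional duality) and then invoking Lemma~\ref{lemma con palitos} together with the injectivity of both sides.

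The second gap is the cofinite-hull equality $\overleftarrow{/\alpha\s}=/\alpha\s$ in (2), which you explicitly leave unresolved (``the delicate point''). Your proposed route --- transporting the supremum over $COFIN(\ell_\infty(B_{E'}))$ down to $COFIN(E)$ along $I_E$ --- runs into the problem that for an isometric embedding $I_E:E\hookrightarrow\ell_\infty(B_{E'})$ and $L$ finite-codimensional, the induced map $E/I_E^{-1}(L)\to\ell_\infty(B_{E'})/L$ is contractive but not isometric, so the two families of quotient norms do not match up without further work. The paper's proof is a short formal trick you are missing: apply the already-established identity $/\beta\s=/(\overleftarrow{\beta})\s$ to $\beta=/\alpha\s$ itself to get $/\alpha\s=/(/\alpha\s)\s=/(\overleftarrow{/\alpha\s})\s$, and then use $/\mu\s\le\mu$ with $\mu=\overleftarrow{/\alpha\s}$ to conclude $/\alpha\s\le\overleftarrow{/\alpha\s}$; the reverse inequality is automatic. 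I would also note that your derivation of $\overrightarrow{\s\alpha/}=\s\alpha/$ from (5) presupposes that $\s\alpha'/$ and $/\alpha'\s$ are finitely generated, which is essentially the content of the finite-hull parts of (1) and (3); the paper proves those directly (injectivity collapses the infimum for $/\alpha\s$; a lifting through $Q_E$ using the metric approximation property of $\ell_1(B_E)$ handles $\s\alpha/$), and you will need those direct arguments anyway, so little is actually saved by routing everything through (5).
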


It is important to remark that the identity $\s \overleftarrow{\alpha}/  = \overleftarrow{\s \alpha /}$ fails to hold in general.
To see this, notice that $\overleftarrow{\pi_s} = \pi_s$ on $\os \ell_{1}^m$. Then, by Lemma~\ref{lemma con palitos} we have
$\s \overleftarrow{\pi_s} / =  \s \pi_s / = \pi_s$ (since $\pi_s$ is projective).
But $\pi_s$ is not cofinitely generated \cite[2.5.]{Flo01}. Thus,
$$\s \overleftarrow{\pi_s} / =  \s \pi_s / = \pi_s \neq \overleftarrow{\pi_s} =\overleftarrow{ \s \pi_s /}.$$
To prove Proposition~\ref{relaciones felchita con palito} we  need the following lemma.

\begin{lemma} \label{lemma con palitos}
Let $\alpha$ and $\beta$ be s-tensor norms.
\begin{enumerate}
\item The equality $\alpha = \beta$ holds on $\os\ell_1^m$ for every $m \in \mathbb{N}$ if and only if
$ \s \alpha / = \s \beta /$.
\item The equality $\alpha = \beta$ holds on $\os \ell_{\infty}^m $ for every $m \in \mathbb{N}$,
if and only if $ / \alpha \s = / \beta \s$.
\end{enumerate}
\end{lemma}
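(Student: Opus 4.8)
The plan is to treat the two statements in parallel, exploiting that each direction is governed by the behaviour of the associates on $\ell_1$- and $\ell_\infty$-spaces recorded in Propositions~\ref{coindicen en l1} and~\ref{coindicen en inft}. The two backward implications are immediate. For (1), the finite case of Proposition~\ref{coindicen en l1} gives $\alpha=\s\alpha/$ and $\beta=\s\beta/$ on $\os\ell_1^m$, so $\s\alpha/=\s\beta/$ forces $\alpha=\beta$ there; for (2) the argument is identical, using instead that $\alpha=/\alpha\s$ on $\os\ell_\infty^m$ from Proposition~\ref{coindicen en inft}.

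For the forward implication of (1) the key point is that the projective associate is computed by the infimum formula $\s\alpha/(z;\os E)=\inf\{\alpha(w;\os\ell_1(B_E)):\os Q_E(w)=z\}$ coming from the metric surjection $\os Q_E$. Hence it suffices to show that $\alpha=\beta$ on every $\os\ell_1^m$ already implies $\alpha=\beta$ on $\os\ell_1(I)$ for every index set $I$. I would fix $w=\sum_{j}\lambda_j\os y_j\in\os\ell_1(I)$, approximate each $y_j\in\ell_1(I)$ by finitely supported vectors, and note that the resulting tensors $w_k$ converge to $w$ both in $\os_\alpha\ell_1(I)$ and in $\os_\beta\ell_1(I)$, using the continuity of $y\mapsto\os y$ (valid since $\alpha,\beta\le\pi_s$, as already invoked in the Approximation Lemma). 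Each $w_k$ lies in some $\os\ell_1(F)$ with $F$ finite; since $\ell_1(F)\cong\ell_1^{|F|}$ is $1$-complemented in $\ell_1(I)$, the metric mapping property gives $\alpha(w_k;\os\ell_1(I))=\alpha(w_k;\os\ell_1^{|F|})=\beta(w_k;\os\ell_1^{|F|})=\beta(w_k;\os\ell_1(I))$, and letting $k\to\infty$ yields $\alpha(w)=\beta(w)$. Feeding this into the infimum formula gives $\s\alpha/=\s\beta/$.

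The forward implication of (2) is the main obstacle: the density argument above collapses because finitely supported vectors are not dense in $\ell_\infty(I)$, so $\alpha=\beta$ on $\os\ell_\infty^m$ need not propagate to $\os\ell_\infty(I)$. I would therefore route through duality. The finite-dimensional duality \eqref{defi prima dim finita}, with $M=\ell_1^m$ and $M'=\ell_\infty^m$, turns the hypothesis $\alpha=\beta$ on $\os\ell_\infty^m$ into $\alpha'=\beta'$ on $\os\ell_1^m$; by part (1), already proved, this gives $\s\alpha'/=\s\beta'/$. To convert this back into a statement about $/\alpha\s$ I would use the injective-associate formula $/\alpha\s(z;\os E)=\alpha(\os I_E z;\os\ell_\infty(B_{E'}))$ together with the Corollary to the Duality Theorem~\ref{duality theorem}: since $\ell_1(B_{E'})$ has the metric approximation property, the mapping \eqref{con prima} is an isometry
$$\os_\alpha\ell_\infty(B_{E'})\overset 1\hookrightarrow\big(\os_{\alpha'}\ell_1(B_{E'})\big)'.$$

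Finally, Proposition~\ref{coindicen en l1} gives $\alpha'=\s\alpha'/$ on $\os\ell_1(B_{E'})$, so the right-hand side of the last display depends on $\alpha$ only through the norm $\s\alpha'/$. Consequently
$$/\alpha\s(z;\os E)=\big\|\os I_E z\big\|_{\big(\os_{\s\alpha'/}\ell_1(B_{E'})\big)'},$$
and the same identity holds for $\beta$. Since $\s\alpha'/=\s\beta'/$, the two spaces $\os_{\s\alpha'/}\ell_1(B_{E'})$ and $\os_{\s\beta'/}\ell_1(B_{E'})$ have equal norms, hence equal duals, so $/\alpha\s(z;\os E)=/\beta\s(z;\os E)$ for every normed space $E$ and every $z\in\os E$. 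This establishes $/\alpha\s=/\beta\s$ and completes the proof; the only genuinely delicate point is the passage from the finite $\ell_\infty^m$-data to the infinite-dimensional injective associate, which is precisely what the duality with part (1) circumvents.
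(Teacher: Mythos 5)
Your proof is correct, but it takes a genuinely different route from the paper's for both forward implications. The paper treats (1) and (2) uniformly with a single device: since $\ell_1(B_E)$ is an $\mathcal{L}_{1,1}^g$-space and $\ell_\infty(B_{E'})$ is an $\mathcal{L}_{\infty,1}^g$-space, both with the metric approximation property, the $\mathcal{L}_p$-Local Technique Lemma~\ref{local technique} combined with Proposition~\ref{coinciden con m.a.p.} upgrades the hypothesis $\alpha=\beta$ on every $\os \ell_1^m$ (resp.\ $\os \ell_\infty^m$) to $\alpha=\beta$ on $\os \ell_1(B_E)$ (resp.\ $\os \ell_\infty(B_{E'})$), and the defining metric surjection $\os Q_E$ (resp.\ isometric embedding $\os I_E$) then transfers the equality to the associates. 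In part (1) you replace this by an elementary argument --- density of finitely supported vectors in $\ell_1(I)$ plus $1$-complementation of $\ell_1(F)$ and continuity of $y\mapsto\os y$ --- which is sound and avoids both the Local Technique Lemma and the approximation-property machinery. In part (2), the ``main obstacle'' you identify (finitely supported vectors are not dense in $\ell_\infty(I)$) is real for the naive density argument but is precisely what the Local Technique Lemma sidesteps, since it only uses that every finite-dimensional subspace of $\ell_\infty(B_{E'})$ factors through some $\ell_\infty^m$; the paper therefore runs the same local argument as in (1). Your detour through duality --- converting the hypothesis to $\alpha'=\beta'$ on $\os\ell_1^m$ via \eqref{defi prima dim finita}, invoking part (1) to get $\s\alpha'/=\s\beta'/$, and then recovering $/\alpha\s$ from $\s\alpha'/$ by means of the Corollary to the Duality Theorem~\ref{duality theorem} and Proposition~\ref{coindicen en l1} --- is also correct and free of circularity (all the ingredients precede the lemma in the paper), at the cost of heavier machinery; it still uses the metric approximation property of $\ell_1(B_{E'})$ through that Corollary, and it has the side benefit of making visible that $/\alpha\s$ is determined by $\s\alpha'/$, foreshadowing item (5) of Proposition~\ref{relaciones felchita con palito}. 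Both backward implications are argued exactly as in the paper.
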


\begin{proof}
$(1)$ Suppose that $ \os_{\alpha} \ell_1^m \overset 1 = \os_{ \beta } \ell_1^m$ for every $m$.

If $E$ is a normed space and  $Q_E : \ell_1(B_E) \twoheadrightarrow E$ is the canonical quotient mapping defined in equation (\ref{canonical quotient}), we have
$$ \otimes^{n,s} Q_E \colon \otimes^{n,s}_\alpha \ell_1(E) \overset 1 \twoheadrightarrow  \otimes^{n,s}_{\s \alpha /}  E,$$
$$ \otimes^{n,s} Q_E \colon \otimes^{n,s}_{\beta} \ell_1(E) \overset 1 \twoheadrightarrow  \otimes^{n,s}_{\s \beta /}  E.$$
Since $\ell_1(B_E)$ has the metric approximation property,  by the $\mathcal{L}_p$-Local Technique Lemma~\ref{local technique} and Propositon~\ref{coinciden con m.a.p.} we have  $\alpha = \beta$ on $\os \ell_1(B_E)$. As a consequence,  we have$$ \s \alpha / = \s \beta / \; \text{on} \; \os E.$$
The converse is a direct consequence of Proposition~\ref{coindicen en l1}.

The proof in $(2)$ is similar. Suppose $\alpha = \beta$  on $\os \ell_{\infty}^m $ for every $m$. Again by the $\mathcal{L}_p$-Local Technique Lemma~\ref{local technique} and Propositon~\ref{coinciden con m.a.p.}, we have $\alpha = \beta$ on $\ell_\infty(B_{E'})$. To finish the proof we just use the isometric embeddings
$$ \otimes^{n,s} I_E \colon  \otimes^{n,s}_{/ \alpha \s} E  \overset 1 \hookrightarrow  \otimes^{n,s}_\alpha \ell_{\infty}(B_{E'}),$$
$$ \otimes^{n,s} I_E \colon  \otimes^{n,s}_{/ \beta \s} E  \overset 1 \hookrightarrow  \otimes^{n,s}_\beta \ell_{\infty}(B_{E'}).$$
The converse follows from Proposition~\ref{coindicen en inft}.
\end{proof}

Now we are ready to prove Proposition~\ref{relaciones felchita con palito}.
\begin{proof} (of Proposition~\ref{relaciones felchita con palito})
\\
$(1)$
Since $\alpha = \overrightarrow{\alpha}$ on $\os \ell_\infty^m$, for every $m$, by the Lemma~\ref{lemma con palitos} we have $$/ \alpha \s = / (\overrightarrow{\alpha}) \s \; \text{on} \; \os E.$$

To prove that $/ \alpha \s = \overrightarrow{/ \alpha \s}$, we first note that if $z \in \os M$ with $M \in FIN(E)$,  $/ \alpha \s$ being  injective we have
$$/ \alpha \s (z; \os M) = / \alpha \s (z; \os E).$$
As a consequence,
\begin{eqnarray*}\overrightarrow{/ \alpha \s }(z;\os E) &= &\inf \{ / \alpha \s(z;\os M) : z \in \os M \; \text{and} \; M \in FIN(E) \}\\ & = &
\inf \{ / \alpha \s(z;\os E) \} \\ &= & / \alpha \s(z;\os E).\end{eqnarray*}

\smallskip
$(2)$ Since  $\alpha = \overleftarrow{\alpha}$ on $\os \ell_\infty^m$ for every $m$, the equality  \begin{equation}\label{1radesigde2}/ \alpha \s = / (\overleftarrow{\alpha}) \s\end{equation} on $\os E$ follows Lemma~\ref{lemma con palitos}.
On the other hand,    Proposition~\ref{coinciden con m.a.p.} gives $ \overleftarrow{/ \alpha \s} \leq / \alpha \s$.
To show the reverse inequality, note that $$/ \alpha \s = / ( / \alpha \s ) \s =  / (\overleftarrow{/ \alpha \s}) \s,$$  where the second equality is just \eqref{1radesigde2} applied to $/ \alpha \s$.
Since by definition of the injective associate we have $/ \mu \s \leq \mu$ for every s-tensor norms $\mu$, taking $\mu=\overleftarrow{/ \alpha \s}$  we get
$ / (\overleftarrow{/ \alpha \s}) \s \leq \overleftarrow{/ \alpha \s},$ which gives de desired inequality.

\smallskip
$(3)$
The equality $\s \alpha / = \s (\overrightarrow{\alpha}) /$ is again a consequence of Lemma~\ref{lemma con palitos}.
Since $\ell_1(B_E)$ has the metric approximation property we therefore have $\alpha = \s \alpha / = \overrightarrow{\s \alpha /}$ on $\os \ell_1$.
Thus, for each element $z \in \os E$ and each $\varepsilon>0$ there is an $M \in FIN (\ell_1(B_E))$ and a $w \in \os M$ with $\os Q_E (w) = z$ and $$ \alpha (w; \os M) \leq (1+ \varepsilon) \s \alpha /(z;\os E).$$
Therefore,
\begin{eqnarray*} \s \alpha /(z;\os E) & \leq & \overrightarrow{\s \alpha /}(z; \os E) \\ & \leq & \s \alpha /(z; \os Q_E(M)) \leq \s \alpha /(w; \os M) \\ &\leq& (1+\varepsilon) \s \alpha /(z;\os E).\end{eqnarray*}
Since this holds for arbitrary $\varepsilon$, we have $\s \alpha / = \overrightarrow{\s \alpha /}$ on $\os E$.

$(4)$ Is a direct consequence of Lemma~\ref{lemma con palitos}.

$(5)$ Let us see first that $(\s \alpha /)'$ is injective. Consider an isometric embedding $E \overset 1 \hookrightarrow F$ and $z \in \os M$, where $M$ is a finite dimensional subspace of $E$.
Fix $\varepsilon >0$, since $(\s \alpha /)'$ is finitely generated we can take $N \in FIN(F)$ such that $z \in \os N$ and
$$ (\s \alpha /)'(z;\os N) \leq (\s \alpha /)'(z;\os F) + \varepsilon.$$

Denote by $S$ the finite dimensional subspace of $F$ given by $M + N$ and $i: M \to S$ the canonical inclusion.
Observe that $\os i': \os_{\s \alpha /} S' \overset 1 \twoheadrightarrow \os_{\s \alpha /} M'$ is a quotient mapping since the s-tensor norm $\s \alpha /$ is projective. Therefore, its adjoint
$$ (\os i')' : \big(\os_{\s \alpha /}M'\big)'  \overset 1 \hookrightarrow  \big(\os_{\s \alpha /}S'\big)',$$
is an isometric embedding.
Using the definition of the dual norm on finite dimensional spaces and the right identifications, it is easy to show that the following diagram commutes

\begin{equation}
\xymatrix{
{\os_{(\s \alpha /)'}M}  \ar@{=}[d] \ar[rr]^{\;\;\;\;\;\;\;\;\;\;\;  \os i \;\;\;\;\;\;\; } & & {\os_{(\s \alpha /)'}S} \ar@{=}[d] \\
{\big(\os_{\s \alpha /}M'\big)'} \ar@{^{(}->}[rr]^{\;\;\;\;\;\;\;\;\;\;\; (\os i')' \;\;\;\;\;\;\;  } & & {\big(\os_{\s \alpha /}S'\big)'}  \\
}.
\end{equation}
Therefore $\os i : \os_{(\s \alpha /)'}M \to \os_{(\s \alpha /)'}S$ is also an isometric embedding. With this $(\s \alpha /)'(z;\os M) = (\s \alpha /)'(z;\os S)$.
Now,
\begin{align*}
(\s \alpha /)'(z;\os E) & \leq (\s \alpha /)'(z;\os M) \leq (\s \alpha /)'(z;\os S) \\
& \leq (\s \alpha /)'(z;\os N) \leq  (\s \alpha /)'(z;\os F) + \varepsilon.
\end{align*}
Since this holds for every $\varepsilon >0$, we obtain $(\s \alpha /)'(z;\os E) \leq (\s \alpha /)'(z;\os F)$. The other inequality always holds, so $(\s \alpha /)'$ is injective.

We now show that $(\s \alpha /)'$ coincides with $ / \alpha' \s$.
Note that for $m \in \mathbb{N}$,
$$ \os_{(\s \alpha /)'} \ell_{\infty}^m = \big( \os_{\s \alpha /} \ell_{1}^m \big)' = \big( \os_{ \alpha } \ell_{1}^m \big)' = \os_{\alpha '} \ell_{\infty}^m =  \os_{/ \alpha ' \s} \ell_{\infty}^m .$$
Therefore, the s-tensor norms $(\s \alpha /)'$ and $/ \alpha' \s$ coincide in $\os \ell_{\infty}^m$ for every $m \in \mathbb{N}$ and, by Lemma~\ref{lemma con palitos}, their corresponding injective associates coincide. But both  $ (\s \alpha /)' $ and   $/ \alpha' \s$  are injective, which means that they actually are their own injective associates, therefore $ (\s \alpha /)' $ and   $/ \alpha' \s$  are equal.

\bigskip
Let us finally prove that $(/ \alpha \s)' = \s \alpha' /$.
We already showed that $(\s \beta /)' = / \beta ' \s$ for every tensor norm $\beta$. Thus, for $\beta = \alpha'$ we have
$(\s \alpha' /)' = / \alpha '' \s = / \overrightarrow{\alpha} \s = / \alpha \s,$
where the third equality comes from $(1)$.
Thus, by duality, the fact that $\s \alpha' /$ is finitely generated (by $(2)$) and equation~(\ref{bidual de una norma tensorial}) we have
$$ \s \alpha' /= \overrightarrow{\s \alpha' /} = (\s \alpha' /)'' = \big((\s \alpha' /)'\big)' = (/ \alpha \s)',$$
which is what we wanted to prove.
\end{proof}

As a  consequence of Proposition~\ref{relaciones felchita con palito} we obtain the following.

\begin{corollary} \label{observacionespiolas}
Let $\alpha$ be an s-tensor norm. The following holds:
\begin{enumerate}
\item If $\alpha$ is injective then it is finitely and cofinitely generated.
\item If $\alpha$ then is projective then it is finitely generated.
\item If $\alpha$ is finitely or cofinitely generated then: $\alpha$ is injective if and only if $\alpha'$ is projective.
\end{enumerate}
\end{corollary}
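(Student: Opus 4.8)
The plan is to deduce all three statements from Proposition~\ref{relaciones felchita con palito}, using the elementary characterizations that $\alpha$ is injective if and only if $\alpha = /\alpha\s$, projective if and only if $\alpha = \s\alpha/$, finitely generated if and only if $\alpha = \overrightarrow{\alpha}$, and cofinitely generated if and only if $\alpha = \overleftarrow{\alpha}$.

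For $(1)$ I would start from the hypothesis that $\alpha$ is injective, so that $\alpha = /\alpha\s$. Parts $(1)$ and $(2)$ of Proposition~\ref{relaciones felchita con palito} give $/\alpha\s = \overrightarrow{/\alpha\s}$ and $/\alpha\s = \overleftarrow{/\alpha\s}$; replacing $/\alpha\s$ by $\alpha$ yields $\alpha = \overrightarrow{\alpha}$ and $\alpha = \overleftarrow{\alpha}$, i.e. $\alpha$ is both finitely and cofinitely generated. Statement $(2)$ is entirely analogous: if $\alpha$ is projective then $\alpha = \s\alpha/$, and part $(3)$ of the Proposition gives $\s\alpha/ = \overrightarrow{\s\alpha/}$, whence $\alpha = \overrightarrow{\alpha}$ is finitely generated.

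For $(3)$ the forward implication needs no extra hypothesis: if $\alpha$ is injective then $\alpha = /\alpha\s$, so part $(5)$ of the Proposition gives $\alpha' = (/\alpha\s)' = \s\alpha'/$, and the right-hand side, being a projective associate, is projective. The real content is the converse. Assuming $\alpha'$ is projective we have $\alpha' = \s\alpha'/$, and part $(5)$ again yields $(/\alpha\s)' = \s\alpha'/ = \alpha'$. Hence $/\alpha\s$ and $\alpha$ have the same dual norm, and since a dual norm determines a tensor norm on finite dimensional spaces (because $\beta = \beta''$ there and $\beta'' = (\beta')'$), the norms $/\alpha\s$ and $\alpha$ coincide on $\os M$ for every finite dimensional $M$.

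The step I expect to be the only delicate point is upgrading this finite dimensional equality to equality on all normed spaces, and this is exactly where the hypothesis of $(3)$ is used. By part $(1)$, already proved, the injective norm $/\alpha\s$ is both finitely and cofinitely generated. If $\alpha$ is finitely generated, then $/\alpha\s$ and $\alpha$ are two finitely generated s-tensor norms agreeing on finite dimensional spaces, hence equal by the uniqueness of the finite hull; if $\alpha$ is cofinitely generated, the corresponding uniqueness for cofinitely generated norms applies. In either case $\alpha = /\alpha\s$ is injective, which completes the converse. I would close by noting that part $(1)$ is invoked precisely to guarantee that $/\alpha\s$ shares the relevant generation property with $\alpha$, so that the finite dimensional identification can propagate.
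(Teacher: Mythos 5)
Your proposal is correct, and parts $(1)$, $(2)$ and the forward implication of $(3)$ follow the paper verbatim: they are read off from items $(1)$--$(3)$ and $(5)$ of Proposition~\ref{relaciones felchita con palito} exactly as you do. The only place where you genuinely diverge is the converse of $(3)$. The paper splits it into two separate arguments: for $\alpha$ finitely generated it computes $\alpha''=(\s \alpha' /)'=/ \alpha'' \s=/ \alpha \s$ and uses $\alpha=\alpha''$ (essentially your bidual computation in disguise); for $\alpha$ cofinitely generated it gives a completely different argument, taking an isometric embedding $i\colon E \overset{1}{\hookrightarrow} F$, observing that projectivity of $\alpha'$ makes $(\os i')'$ an isometry, and then invoking the Duality Theorem~\ref{duality theorem} to identify $\os_{\overleftarrow{\alpha}}E$ with a subspace of $(\os_{\alpha'}E')'$ so that $\os i$ is forced to be isometric. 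You instead handle both cases uniformly: from $(/ \alpha \s)'=\s \alpha' /=\alpha'$ you get $/ \alpha \s=\alpha$ on finite dimensional spaces, note that $/ \alpha \s$ is both finitely and cofinitely generated by part $(1)$, and conclude by the uniqueness of the finitely (resp.\ cofinitely) generated norm agreeing with a given one on $FIN$ --- a fact the paper records in the preliminaries. This is a valid and arguably cleaner route: it avoids the Duality Theorem altogether and makes transparent that the only role of the hypothesis is to let the finite dimensional identity $\alpha=/ \alpha \s$ propagate via the appropriate hull. What the paper's diagram argument buys in exchange is a direct, hands-on exhibition of the isometry $\os i$, without appealing to the uniqueness-of-hulls principle.
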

\begin{proof}
Note that $(1)$ is a consequence of $(1)$ and $(2)$ of Proposition~\ref{relaciones felchita con palito} and that $(2)$ follows from $(3)$ of Proposition~\ref{relaciones felchita con palito}.
Let us show $(3)$.
If $\alpha$ is injective, we have $\alpha = / \alpha \s$. Thus, we can use  $(5)$ of Proposition~\ref{relaciones felchita con palito} to take dual norms:
$$\alpha' = ( / \alpha \s ) ' = \s \alpha' /.$$
Since the last s-tensor norm is projective, so is $\alpha'$.
Note that for this implication we have not used the fact that $\alpha$ is finitely or cofinitely generated.

Suppose now that $\alpha$ is finitely generated and $\alpha'$ is projective (i.e. $\alpha' = \s \alpha'/$). Thus, by $(5)$ in Proposition~\ref{relaciones felchita con palito} we have
$$\alpha'' = ( \s \alpha' / )' = / \alpha '' \s. $$
Since $\alpha$ is finitely generated, we have $\alpha = \alpha''$, see equation~(\ref{bidual de una norma tensorial}). Thus, $\alpha = / \alpha  \s$, which asserts that $\alpha$ is injective.

Finally, suppose that $\alpha$ is cofinitely generated and $\alpha'$ is projective. Consider an isometric embedding $i: E \overset 1 \hookrightarrow F$.
Since $\alpha'$ is projective,  $\os i' : \os F' \overset 1 \twoheadrightarrow \os E'$ is a quotient mapping and, therefore,  its adjoint $(\os i')'$ is an isometry.
Consider the commutative diagram
\begin{equation}
\xymatrix{
{\os_{\alpha}E = \os_{\overleftarrow{\alpha}}E}  \ar@{-->}[d]^{\os i} \ar@{^{(}->}[rr]^{;\;\;\;\;\;\;\;\;\;\;  1 \;\;\;\;\;\;\;} & & {\big(\os_{\alpha'}E'\big)' } \ar[d]^{(\os i')'} \\
{\os_{\alpha}F=\os_{\overleftarrow{\alpha}}F}  \ar@{^{(}->}[rr]^{ ;\;\;\;\;\;\;\;\;\;\;  1 \;\;\;\;\;\;\; } & & {\big(\os_{\alpha'}F' \big)'} \\
}.
\end{equation}
By the Duality Theorem~\ref{duality theorem} the horizontal arrows are also isometries. This forces $\os i$ to be also an isometry, which means that $\alpha$ respects subspaces isometrically. In other words, $\alpha$ is injective.
\end{proof}

\section{Applications to the theory of polynomial ideals}\label{seccion-polinomios}

In this section we compile some consequences of the previous results to the theory of polynomial ideals.
Let us recall some definitions from \cite{Flo02(On-ideals)}: a \emph{Banach ideal of continuous scalar valued
$n$-homogeneous polynomials} is a pair
$(\mathcal{Q},\|\cdot\|_{\mathcal Q})$ such that:
\begin{enumerate}
\item[(i)] $\mathcal{Q}(E)=\mathcal Q \cap \mathcal{
P}^n(E)$ is a linear subspace of $\mathcal{P}^n(E)$ and $\|\cdot\|_{\mathcal Q}$ is a norm which makes the pair
$(\mathcal{Q},\|\cdot\|_{\mathcal Q})$ a Banach space.

\item[(ii)] If $T\in \mathcal{L} (E_1,E)$, $p \in \mathcal{Q}(E)$ then $p\circ T\in \mathcal{Q}(E_1)$ and $$ \|
p\circ T\|_{\mathcal{Q}(E_1)}\le  \|p\|_{\mathcal{Q}(E)} \| T\|^n.$$

\item[(iii)] $z\mapsto z^n$ belongs to $\mathcal{Q}(\mathbb K)$
and has norm 1.
\end{enumerate}

Let $(\mathcal{Q},\|\cdot\|_{\mathcal Q})$ be the Banach ideal of continuous scalar valued
$n$-homogeneous polynomials and, for $p \in \mathcal{
P}^n(E)$, define
$\|p\|_{\Q^{max}(E)}:= \sup \{ \|p|_M\|_{\Q(M)} : M \in FIN(E) \} \in [0, \infty].$
The maximal hull of $\mathcal{Q}$ is the ideal given by $ \mathcal{Q}^{max} := \{p \in \P : \|p\|_{\Q^{max}} < \infty \}$.
An ideal $\mathcal{Q}$ is said to be \emph{maximal} if $\mathcal{Q} \overset{1}{=} \mathcal{Q}^{max}$.
It is immediate that for each space $E$ , we have $\Q(E) \subset \Q^{max}(E)$. Moreover, $$\| p \|_{\Q^{max}(E)} \leq \|p\|_{\Q(E)}\; \text{for every} \; p \in \Q(E).$$

\emph{The minimal kernel of $\mathcal{Q}$} is defined as the composition ideal $\mathcal{Q}^{min} := \mathcal{Q} \circ \overline{\mathfrak{F}}$, where $\overline{\mathfrak{F}}$ stands for the ideal of approximable operators. In other words, a polynomial $p$ belongs to $\mathcal{Q}^{min}(E)$ if it admits a factorization
\begin{equation} \label{factominimal}
\xymatrix{ E  \ar[rr]^{p} \ar[rd]^{T} & & {\mathbb{K}} \\
& {F} \ar[ru]^{q} & },
\end{equation}
where $F$ is a Banach space, $T : E \to F$ is an
approximable operator and $q$ is in $\Q(F)$.
The minimal norm of $p$ is given by $\|p\|_{\Q^{min}} := \inf \{  \|q\|_{\Q(F)} \|T\|^n \}$, where the infimum runs over all possible factorizations as in~(\ref{factominimal}). An ideal $\mathcal{Q}$ is said to be minimal if $\mathcal{Q} \overset{1}{=} \mathcal{Q}^{min}$.
For more properties about maximal and minimal ideals of homogeneous polynomials and examples see  \cite{Flo02(Ultrastability),Flo01} and the references therein.

If  $\Q$ is a Banach polynomial ideal, its \emph{associated s-tensor norm} is the unique finitely generated tensor norm $\alpha$ satisfying $$\Q(M) \overset 1 =  \otimes^{n,s}_{\alpha} M,$$ for every finite dimensional space $M$.
Notice that $\Q$, $\Q^{max}$ and $\Q^{min}$ have the same associated s-tensor norm since they coincide isometrically on finite dimensional spaces.
The polynomial representation theorem \cite[3.2]{Flo02(Ultrastability)} asserts that, if $\Q$ is maximal, then we have \begin{equation}\label{representation theorem}\Q(E) \overset 1 =  \big( \widetilde{\otimes}^{n,s}_{\alpha'} E \big)',\end{equation} for every space $E$.

A natural question is whether a polynomial ideal is closed under the Aron-Berner extension and, also,  if the ideal norm is preserved by this extension. Positive answers for both questions were obtained for particular polynomial ideals in \cite{CarandoZalduendo99,Carando99,Moraes84} among others. However, some polynomial ideals are not closed under Aron-Berner extension (for example, the ideal of weakly sequentially continuous polynomials). Since the dual s-tensor norm $\alpha'$ is finitely generated, we can rephrase the Extension Lemma~\ref{extension lemma} in terms of maximal polynomial ideals and give a positive answer to the question for ideals of this kind.

\begin{theorem}\textbf{\emph{ (Extension lemma for maximal polynomial ideals.)}}\label{ABisom} Let $\Q$ be a maximal ideal of
$n$-homogeneous polynomials and $p \in \Q(E)$, then its
Aron-Berner extension is in $\Q(E'')$ and
$$\|p\|_{\Q(E)}=\|AB(p)\|_{\Q(E'')}.$$
\end{theorem}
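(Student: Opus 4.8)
The plan is to reduce the statement about maximal polynomial ideals to the already-proved Extension Lemma~\ref{extension lemma} via the polynomial representation theorem. The key observation is that when $\Q$ is maximal, its associated s-tensor norm $\alpha$ is finitely generated, and so is its dual $\alpha'$ (dual norms are always finitely generated, by the discussion following~\eqref{defi prima dim finita}). The representation theorem~\eqref{representation theorem} then identifies $\Q(E)$ isometrically with $\big(\widetilde{\otimes}^{n,s}_{\alpha'} E\big)'$, which is exactly the space of functionals to which Lemma~\ref{extension lemma} applies.

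First I would recall that since $\Q$ is maximal with associated finitely generated s-tensor norm $\alpha$, the dual s-tensor norm $\alpha'$ is also finitely generated, so we may apply the Extension Lemma~\ref{extension lemma} with $\alpha'$ in place of $\alpha$. By the polynomial representation theorem~\eqref{representation theorem}, a polynomial $p \in \Q(E)$ corresponds isometrically to an element of $\big(\widetilde{\otimes}^{n,s}_{\alpha'} E\big)'$, with $\|p\|_{\Q(E)} = \|p\|_{(\widetilde{\otimes}^{n,s}_{\alpha'} E)'}$. Applying Lemma~\ref{extension lemma} to this functional, its Aron-Berner extension $AB(p)$ belongs to $\big(\widetilde{\otimes}^{n,s}_{\alpha'} E''\big)'$ and satisfies
$$\|p\|_{(\widetilde{\otimes}^{n,s}_{\alpha'} E)'} = \|AB(p)\|_{(\widetilde{\otimes}^{n,s}_{\alpha'} E'')'}.$$
Finally I would invoke the representation theorem~\eqref{representation theorem} once more, this time for the space $E''$, to translate $\big(\widetilde{\otimes}^{n,s}_{\alpha'} E''\big)'$ back into $\Q(E'')$ isometrically, yielding $AB(p) \in \Q(E'')$ with $\|AB(p)\|_{\Q(E'')} = \|AB(p)\|_{(\widetilde{\otimes}^{n,s}_{\alpha'} E'')'}$. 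Chaining these three isometric identifications gives $\|p\|_{\Q(E)} = \|AB(p)\|_{\Q(E'')}$.

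The only genuine subtlety I would need to verify is that the functional-level extension produced by Lemma~\ref{extension lemma} is indeed the Aron-Berner extension of the polynomial, i.e.\ that the two notions of ``extension to the bidual'' agree under the duality~\eqref{dualidad-pol-tensor}. This is built into the construction in Lemma~\ref{Aprox} and the proof of Lemma~\ref{extension lemma}, where the extending functional is defined precisely through the iterated weak-$*$ limits that characterize $AB(p)$, so no further work is needed beyond citing it. I do not expect any serious obstacle here: the entire content of the theorem has been absorbed into the Extension Lemma, and this statement is essentially a dictionary translation. The main point worth stating explicitly is that maximality is exactly the hypothesis ensuring~\eqref{representation theorem} holds for all spaces $E$ (in particular for $E''$), which is what makes both endpoints of the argument available.
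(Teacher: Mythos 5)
Your proposal is correct and is exactly the paper's argument: the paper derives Theorem~\ref{ABisom} by noting that $\alpha'$ is finitely generated and combining the Representation Theorem \eqref{representation theorem} for $E$ and $E''$ with the Extension Lemma~\ref{extension lemma} applied to $\alpha'$. No differences worth noting.
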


Floret and Hunfeld showed in \cite{Flo02(Ultrastability)} that there is another extension to the bidual, the so called uniterated Aron-Berner extension, which is an isometry for maximal polynomial ideals.
The isometry and other properties of the uniterated extension are rather easy to prove. However, this extension is hard to compute, since its definition depends on an ultrafilter. On the other hand, the Aron-Berner extension is not only easier to compute, but also has a simple characterization that allows to check if a given extension of a polynomial is actually its Aron-Berner extension~\cite{Zalduedo90}. Moreover, the iterated nature of the Aron-Berner extension makes it more appropriate for the the study of polynomials and analytic functions. The next result shows that the Aron-Berner extension is also an isometry for minimal polynomial ideals.

\begin{theorem}\textbf{\emph{ (Extension lemma for minimal polynomial ideals.)}}\label{isom AB minimales}
Let $\Q$ be a minimal ideal. For $p \in {\Q}(E)$, its Aron-Berner extension
$AB(p)$ belongs to ${\Q}(E'')$ and
$$\|p\|_{{\Q}(E)}=\|AB(p)\|_{{\Q}(E'')}.$$
\end{theorem}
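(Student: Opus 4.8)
The plan is to reduce the minimal case to the already-established maximal case (Theorem~\ref{ABisom}) via duality, using the standard relationship between minimal and maximal ideals together with the Embedding Lemma~\ref{embedding}. The key structural fact I would rely on is that if $\alpha$ is the finitely generated s-tensor norm associated to $\Q$, then the minimal ideal $\Q^{min}$ is represented by the completed symmetric tensor product itself: concretely, for $p \in \Q^{min}(E)$ one expects an isometric identification $\Q^{min}(E) \overset 1 = \widetilde{\otimes}^{n,s}_{\alpha'} E$ (the \emph{predual} picture), dual to the representation theorem~\eqref{representation theorem} which gives $\Q^{max}(E) \overset 1 = (\widetilde{\otimes}^{n,s}_{\alpha'} E)'$ when the ideal is maximal. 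Since $\Q$, $\Q^{max}$ and $\Q^{min}$ all share the same associated s-tensor norm $\alpha$, and $\alpha'$ is finitely generated by definition, this places the minimal case squarely in the setting where the Extension Lemma~\ref{extension lemma} and the Embedding Lemma~\ref{embedding} apply to the norm $\alpha'$.

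First I would make precise the identification of the minimal ideal with a completed symmetric tensor product. Using the factorization~\eqref{factominimal} through approximable operators together with the fact that $\overline{\mathfrak{F}}$ is the closure of finite rank operators, one writes $p = q \circ T$ with $T$ approximable, and pushes the tensorial description through: elements of $\Q^{min}(E)$ correspond to elements of $\widetilde{\otimes}^{n,s}_{\alpha'} E$ under the natural duality, with matching norms. Next I would invoke the Embedding Lemma~\ref{embedding}, applied to the finitely generated norm $\alpha'$, which yields the isometric embedding
$$
\otimes^{n,s}_{\alpha'} E \overset 1 \hookrightarrow \otimes^{n,s}_{\alpha'} E'',
$$
and hence, after completion, $\widetilde{\otimes}^{n,s}_{\alpha'} E \overset 1 \hookrightarrow \widetilde{\otimes}^{n,s}_{\alpha'} E''$. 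Under the minimal-ideal identification, this embedding sends $p$ to an element of $\Q^{min}(E'')$ of the same norm.

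The remaining step is to verify that the element of $\Q^{min}(E'')$ produced by this tensorial embedding is precisely the Aron-Berner extension $AB(p)$, not merely some abstract norm-preserving extension. Here I would use the characterization of $AB(p)$ as the canonical (iterated weak-star limit) extension together with the compatibility of $\otimes^{n,s}\K_E$ with the canonical embedding $\K_E : E \to E''$: on elementary tensors $\os \K_E$ acts as $x \mapsto \os (\K_E x)$, and the polynomial dual to this embedding agrees with $AB(p)$ by the very construction recalled before Lemma~\ref{diferentes indices}. Thus $p \mapsto AB(p)$ is realized by the isometry $\os \K_E$, giving $\|p\|_{\Q(E)} = \|AB(p)\|_{\Q(E'')}$ since $\Q = \Q^{min}$ isometrically.

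I expect the main obstacle to be the careful justification that the minimal ideal is isometrically represented by $\widetilde{\otimes}^{n,s}_{\alpha'} E$ and that, under this representation, the tensorial map $\os \K_E$ corresponds exactly to the Aron-Berner extension. The norm-matching in the minimal factorization requires the approximation-property machinery implicit in $\overline{\mathfrak{F}}$, and identifying the abstract extension with $AB(p)$ relies on the explicit weak-star limit description; reconciling these two descriptions is the delicate point, whereas the embedding and norm equalities themselves follow formally from the Embedding Lemma~\ref{embedding} once the representation is in place.
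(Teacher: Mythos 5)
Your central identification is not correct, and the proof does not survive without it. You claim $\Q^{min}(E) \overset{1}{=} \widetilde{\otimes}^{n,s}_{\alpha'} E$. First, $\widetilde{\otimes}^{n,s}_{\alpha'} E$ is a tensor product of $E$ itself; its elements are not polynomials on $E$ (its \emph{dual} is $\Q^{max}(E)$ by the representation theorem \eqref{representation theorem}), so it cannot represent $\Q^{min}(E)$. The object you want is $\widetilde{\otimes}^{n,s}_{\alpha} E'$, built on the dual space and carrying the norm $\alpha$ associated to $\Q$, not $\alpha'$. Second, and more seriously, even the corrected natural map $\widetilde{\otimes}^{n,s}_{\alpha} E' \to \Q^{min}(E)$ is in general only a metric surjection; it is an isometric isomorphism only under additional hypotheses (accessibility of $\Q$ and/or approximation properties of $E'$) --- indeed, the paper itself only invokes the isometric relation $\widetilde{\otimes}^{n,s}_{\alpha} E' = \Q^{min}(E) \hookrightarrow \Q(E)$ under an explicit accessibility assumption in its final section. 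Since the theorem is asserted for an arbitrary minimal ideal and an arbitrary Banach space $E$, you cannot base the argument on an unrestricted isometric identification, and the subsequent steps (the Embedding Lemma applied to this representation, and the identification of the induced map with the Aron--Berner extension) rest on a foundation that is not available.

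The paper's proof bypasses tensor representations of the minimal ideal entirely. It uses $\Q(E) \overset{1}{=} \big((\Q^{max})^{min}\big)(E)$ to factor $p = q \circ T$ with $T: E \to F$ approximable, $q \in \Q^{max}(F)$ and $\|q\|_{\Q^{max}(F)}\|T\|^n \leq \|p\|_{\Q(E)} + \varepsilon$; it then observes that $AB(p) = AB(q) \circ T''$, that $T''$ is again approximable, and applies the already-proved Extension Lemma for maximal ideals (Theorem~\ref{ABisom}) to $q$. This yields $\|AB(p)\|_{\Q(E'')} \leq \|p\|_{\Q(E)} + \varepsilon$ directly, the reverse inequality being immediate. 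If you wanted to salvage your route, you would need the metric-surjection form of the minimal representation theorem (from Floret's work, not proved in this paper), lift $p$ to a tensor $z \in \widetilde{\otimes}^{n,s}_{\alpha} E'$ of norm at most $\|p\|_{\Q(E)} + \varepsilon$, push it forward by $\otimes^{n,s}\K_{E'}$ into $\widetilde{\otimes}^{n,s}_{\alpha} E'''$, and then verify on finite-type tensors plus a density argument that the resulting polynomial on $E''$ is $AB(p)$; that is workable in principle but substantially heavier than, and different from, the paper's factorization argument.
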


\begin{proof}
Since $p \in {\Q}(E)\overset{1}
=\big(({\Q}^{max})^{min}\big)(E)$ (see
\cite[3.4]{Flo01}), given $\varepsilon > 0$ there exist a Banach space $F$, an
approximable operator $T : E \to F$ and a polynomial $q \in
{\Q}^{max}(F)$ such that $p = q\circ T$ (as in (\ref{factominimal})) such that
$ \|q\|_{{\Q}^{max}(F)} \|T\|^n \leq
\|p\|_{{\Q}(E)} + \varepsilon$.

Notice that
$AB(p) = AB(q) \circ T''$. By Theorem~\ref{ABisom} we have
$\|q\|_{{\Q}^{max}(F)}=\|AB(q)\|_{{\Q}^{max}(F)}$.
Since $T$ is approximable, so is $T''$. With this we conclude, that $AB(p)$ belongs to $\Q(E'')$ and
\begin{align*}
\|AB(p)\|_{{\Q}(E'')} & \leq   \|AB(q)\|_{{\Q}^{max}(F'')} \|T''\|^n & \\
& =  \|q\|_{{\Q}^{max}(F)} \|T\|^n \\
& \leq \|p\|_{{\Q}(E)} +
\varepsilon,
\end{align*}
for every $\varepsilon$. The reverse inequality is immediate.
\end{proof}

The next statement is a polynomial version of the Density Lemma~\ref{density}.

\begin{lemma}\textbf{\emph{(Density lemma for
maximal polynomial ideals.)}}
Let $\Q$ be a polynomial ideal, $E$ a Banach space, $E_0$ a dense subspace and $C \subset FIN(E_0)$ a cofinal subset.
Then
$$\|p\|_{\Q^{max}(E)} = \sup \{\|p|_{M}\|_{\Q(M)} : M \in C \}.$$
\end{lemma}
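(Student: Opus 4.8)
The plan is to prove the two inequalities separately, the nontrivial one being $\|p\|_{\Q^{max}(E)} \le \sup\{\|p|_M\|_{\Q(M)} : M \in C\}$. The reverse inequality is immediate: since $C \subset FIN(E_0) \subset FIN(E)$, the supremum defining the right-hand side is taken over a subfamily of the one defining $\|p\|_{\Q^{max}(E)}$, so it cannot exceed it. Thus the whole content lies in bounding $\|p|_M\|_{\Q(M)}$, for an arbitrary $M \in FIN(E)$, by the right-hand side.

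First I would fix $M \in FIN(E)$ with a basis $e_1, \dots, e_k$ and, using the density of $E_0$ in $E$, choose vectors $f_i^{(\eta)} \in E_0$ with $f_i^{(\eta)} \to e_i$. Setting $T_\eta e_i := f_i^{(\eta)}$ defines operators $T_\eta \in \mathcal{L}(M,E)$ with range $\tilde M_\eta := T_\eta(M) \in FIN(E_0)$; for $\eta$ large these are isomorphisms onto $\tilde M_\eta$ and $\|T_\eta\| \to \|\iota\| = 1$, where $\iota \colon M \hookrightarrow E$ is the inclusion. The crucial observation is that $p \circ T_\eta$ factors as $p|_{\tilde M_\eta} \circ T_\eta$ with $T_\eta \colon M \to \tilde M_\eta$, so the ideal property (ii) gives
$$\|p \circ T_\eta\|_{\Q(M)} = \|p|_{\tilde M_\eta} \circ T_\eta\|_{\Q(M)} \le \|p|_{\tilde M_\eta}\|_{\Q(\tilde M_\eta)} \, \|T_\eta\|^n.$$

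The key step, and the one where finite dimensionality is essential, is to show that $\|p \circ T_\eta\|_{\Q(M)} \to \|p|_M\|_{\Q(M)}$. Since $T_\eta x \to x$ uniformly on the (compact) unit ball of $M$, the polynomials $p \circ T_\eta$ converge to $p|_M$ in the sup norm of $\mathcal{P}^n(M)$; but $\mathcal{P}^n(M)$ is finite dimensional, so the sup norm and the $\Q(M)$-norm are equivalent on it, and convergence in one forces convergence in the other. Note that all the $p \circ T_\eta$ and $p|_M$ live in the single fixed space $\Q(M)$, which is what makes this comparison legitimate. Passing to the limit in the displayed inequality and using $\|T_\eta\| \to 1$ then yields $\|p|_M\|_{\Q(M)} \le \sup_{N \in FIN(E_0)} \|p|_N\|_{\Q(N)}$.

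Finally I would remove the gap between $FIN(E_0)$ and the cofinal subset $C$. For any $N \in FIN(E_0)$, cofinality provides $N' \in C$ with $N \subseteq N'$; restricting along the norm-one inclusion $N \hookrightarrow N'$ and applying (ii) gives $\|p|_N\|_{\Q(N)} \le \|p|_{N'}\|_{\Q(N')}$, so that $\sup_{N \in FIN(E_0)} \|p|_N\|_{\Q(N)} = \sup_{N \in C} \|p|_N\|_{\Q(N)}$. Combining this with the previous paragraph and taking the supremum over all $M \in FIN(E)$ gives $\|p\|_{\Q^{max}(E)} \le \sup\{\|p|_M\|_{\Q(M)} : M \in C\}$, which together with the trivial inequality completes the proof. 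I expect the \emph{main obstacle} to be precisely the pull-back device in the middle paragraph: one must compare ideal norms living on the varying spaces $\tilde M_\eta$, and transporting everything back to the fixed space $\Q(M)$ via $T_\eta$ — so that the elementary ``all norms equivalent in finite dimensions'' argument applies — is what makes the perturbation estimate go through cleanly.
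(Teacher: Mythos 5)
Your proof is correct, but it follows a genuinely different route from the paper's. The paper deduces the result in two lines from machinery already built: the Representation Theorem identifies $\|p\|_{\mathcal{Q}^{max}(E)}$ with $\|p\|_{(\otimes^{n,s}_{\alpha'}E)'}$ for the (finitely generated) dual tensor norm $\alpha'$, the tensor-norm Density Lemma (itself a consequence of the Embedding and Extension Lemmas, hence ultimately of Aron--Berner/local reflexivity) gives $\|p\|_{(\otimes^{n,s}_{\alpha'}E)'}=\|p\|_{(\otimes^{n,s}_{\alpha'}E_0)'}$, and then one unwinds the definition of $\mathcal{Q}^{max}(E_0)$ over the cofinal family $C$. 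You instead work entirely inside the polynomial-ideal axioms: approximate a basis of an arbitrary $M\in FIN(E)$ by vectors of $E_0$, pull the restriction back along $T_\eta\colon M\to \tilde M_\eta\subset E_0$ via property (ii), and use equivalence of norms on the finite-dimensional space $\mathcal{P}^n(M)$ to pass to the limit. This buys self-containedness and avoids the entire tensor-norm/bidual apparatus, at the cost of being longer than the paper's derivation and of not exhibiting the statement as an instance of the duality between $\mathcal{Q}^{max}$ and $\alpha'$, which is the organizing principle of that section. Two small points you should make explicit: the identity $\mathcal{Q}(M)=\mathcal{P}^n(M)$ for finite-dimensional $M$ (needed so that ``all norms are equivalent'' applies to the $\mathcal{Q}(M)$-norm; it follows from axioms (ii) and (iii) since every polynomial on $M$ is of finite type), and the uniform convergence $p\circ T_\eta\to p|_M$ on $B_M$, which uses that $p$ is Lipschitz on bounded subsets of $E$. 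Neither is a gap, and your cofinality step at the end matches what the paper leaves implicit in the phrase ``by the very definition of the norm in $\mathcal{Q}^{max}$''.
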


\begin{proof}
For $\alpha$ the s-tensor norm associated to $\Q$, by the Representation Theorem \cite[3.2]{Flo02(Ultrastability)} we have as in (\ref{representation theorem}):
$$  \Q^{max}(E)=(\os_{\alpha'} E)'.$$
Using the Density Lemma \ref{density} (since $\alpha'$ is finitely generated) we get
$$\|p\|_{\Q^{max}(E)} = \|p\|_{(\os_{\alpha'} E)'}=  \|p\|_{(\os_{\alpha'} E_0)'}=\|p\|_{\Q^{max}(E_0)}.$$
On the other hand, by the very definition of the norm in $\Q^{max}$, we have $$\|p\|_{\Q^{max}(E_0)} = \sup \{\|p|_{M}\|_{\Q(M)} : M \in C \},$$ which ends the proof.
\end{proof}

From the previous Lemma we obtain the next  useful result: in the case of a Banach space with a Schauder basis, a polynomial belongs to a maximal ideal if and only if the norms of the the restrictions of the polynomial to the subspaces generated by the first elements of the basis, are uniformly bounded.
\begin{corollary} \label{corolario density lemma}
Let $\Q$ a maximal polynomial ideal, $E$ a Banach space with Schauder basis $(e_k)_{k=1}^\infty$ and $M_m$ the finite dimensional subspace generated by the first $m$ elements of the basis, i.e. $M_m: = [e_k : 1 \leq k \leq m]$.
A polynomial $p$ belongs to $\Q(E)$ if and only if $\sup_{m\in \mathbb{N}} \|p|_{M_m}\|_{\Q(M_m)} < \infty$. Moreover,
$$\|p\|_{\Q(E)}=\sup_{m\in \mathbb{N}} \|p|_{M_m}\|_{\Q(M_m)}.$$
\end{corollary}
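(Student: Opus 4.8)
The plan is to deduce Corollary \ref{corolario density lemma} directly from the Density Lemma for maximal polynomial ideals, by exhibiting a suitable cofinal subset of $FIN(E_0)$ built from the basis. First I would set $E_0 := [e_k : k \in \mathbb{N}]$, the linear span of the basis vectors, which is a dense subspace of $E$ since $(e_k)_{k=1}^\infty$ is a Schauder basis. The candidate cofinal family is $C := \{ M_m : m \in \mathbb{N}\}$, where $M_m = [e_k : 1 \leq k \leq m]$. With these choices, the conclusion of the preceding Density Lemma reads
$$
\|p\|_{\Q^{max}(E)} = \sup \{ \|p|_{M}\|_{\Q(M)} : M \in C \} = \sup_{m \in \mathbb{N}} \|p|_{M_m}\|_{\Q(M_m)},
$$
and since $\Q$ is maximal we have $\Q(E) \overset{1}= \Q^{max}(E)$, so $\|p\|_{\Q(E)} = \|p\|_{\Q^{max}(E)}$. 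This immediately yields the displayed norm identity, and the ``belongs to $\Q(E)$ if and only if the supremum is finite'' statement follows because $p \in \Q(E) = \Q^{max}(E)$ precisely when $\|p\|_{\Q^{max}(E)} < \infty$, by the definition of the maximal hull.

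The one genuine verification required is that $C = \{M_m\}_{m}$ is cofinal in $FIN(E_0)$, i.e.\ that every finite dimensional subspace $N$ of $E_0$ is contained in some $M_m$. This is where the argument actually uses the basis structure rather than mere density. Given $N \in FIN(E_0)$, I would pick a finite spanning set $x_1, \dots, x_r$ of $N$; each $x_j$ lies in $E_0 = \bigcup_m M_m$, so each $x_j$ is a finite linear combination of basis vectors and hence belongs to $M_{m_j}$ for some $m_j$. Taking $m := \max_j m_j$ gives $N = [x_1, \dots, x_r] \subset M_m$, which establishes cofinality. Note that the family $\{M_m\}$ is increasing, so this is a particularly clean instance of cofinality.

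I do not expect any serious obstacle here: the corollary is essentially a specialization of the Density Lemma to the concrete dense subspace and increasing chain of subspaces afforded by a Schauder basis, and the only nontrivial point is the elementary cofinality check above. One should be slightly careful that the Density Lemma is stated for a \emph{Banach} space $E$ with a dense subspace $E_0$ and a cofinal $C \subset FIN(E_0)$, so I would simply record that $E_0$ as defined is dense (by the definition of a Schauder basis, every vector is the limit of its partial sum expansions, which lie in $E_0$) before invoking the lemma. With maximality of $\Q$ supplying the isometric identification $\Q(E) \overset{1}= \Q^{max}(E)$, both assertions of the corollary then follow at once.
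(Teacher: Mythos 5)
Your proof is correct and follows exactly the route the paper intends: the corollary is stated as a direct consequence of the preceding Density Lemma for maximal polynomial ideals, applied with $E_0$ the linear span of the basis and the cofinal chain $C=\{M_m\}_m$, together with $\Q \overset{1}{=} \Q^{max}$. The cofinality check you supply is the only substantive verification, and it is done correctly.
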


\bigskip

As a consequence of the Duality Theorem \ref{duality theorem} we have the following.

\begin{theorem} \label{embedding theorem}
\textbf{\emph{(Embedding Theorem.)}}
Let $\Q$ be the maximal polynomial ideal associated to the s-tensor norm $\alpha$. Then the relations
$$ \os_{\overleftarrow{\alpha}}E  \hookrightarrow \Q(E')$$
$$ \os_{\overleftarrow{\alpha}}E' \hookrightarrow \Q(E),$$
hold isometrically.

In particular, the extension
$$\osc_{\alpha}E' \rightarrow \Q(E)$$ of $\os_{\alpha}E' \rightarrow \Q(E)$ is well defined and has norm one.
\end{theorem}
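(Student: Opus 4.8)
The plan is to read off both displayed isometries from the Representation Theorem combined with the Duality Theorem~\ref{duality theorem}, and then to obtain the final ``in particular'' assertion by extending a contraction to the completion.

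First I would recall that, since $\Q$ is maximal with associated finitely generated s-tensor norm $\alpha$, the polynomial representation theorem \cite[3.2]{Flo02(Ultrastability)} (used as in \eqref{representation theorem}) yields the isometric identifications
$$\Q(E) \overset 1 = \big(\osc_{\alpha'} E\big)' = \big(\os_{\alpha'} E\big)' \quad\text{and}\quad \Q(E') \overset 1 = \big(\osc_{\alpha'} E'\big)' = \big(\os_{\alpha'} E'\big)',$$
where in each case we used that a normed space and its completion have the same dual. Under these identifications the two maps of the statement become exactly the \emph{natural} tensor-to-polynomial mappings, sending a tensor $\sum_j \os x_j'$ to the polynomial $x\mapsto \sum_j x_j'(x)^n$; that is, they become
$$\os_{\overleftarrow{\alpha}} E \hookrightarrow \big(\os_{\alpha'} E'\big)' \qquad\text{and}\qquad \os_{\overleftarrow{\alpha}} E' \hookrightarrow \big(\os_{\alpha'} E\big)'.$$
But these are precisely the isometries \eqref{duality 1} and \eqref{duality 2} of the Duality Theorem~\ref{duality theorem}, so both relations hold isometrically.

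For the last claim I would use that $\overleftarrow{\alpha}\leq\alpha$ always holds, whence the identity $\os_\alpha E' \to \os_{\overleftarrow{\alpha}} E'$ is a norm-one operator. Composing it with the isometry $\os_{\overleftarrow{\alpha}} E' \hookrightarrow \Q(E)$ just obtained shows that the natural map $\os_\alpha E' \to \Q(E)$ is a contraction; since $\Q(E)$ is complete, it extends uniquely to an operator of norm $\le 1$ on the completion $\osc_\alpha E' \to \Q(E)$, which is the desired extension. To see that its norm equals one, I would evaluate on a single elementary tensor $\os x'$ with $\|x'\|=1$: its $\alpha$-norm is $\|x'\|^n=1$ because $\varepsilon_s$ and $\pi_s$ agree on elementary tensors, while its image is the polynomial $p(x)=x'(x)^n$, which factors as $z\mapsto z^n$ composed with $x'$ and hence satisfies $\|p\|_{\Q(E)}\le 1$ by property (ii)–(iii) of the ideal and $\|p\|_{\Q(E)}\ge \|p\|_{\P(E)}=\|x'\|^n=1$; thus $\|p\|_{\Q(E)}=1$ and the extension has norm one.

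I expect no substantial obstacle here: the analytic content has already been absorbed into the Duality Theorem (and ultimately the Extension Lemma~\ref{extension lemma}). The only delicate points are bookkeeping, namely checking that the abstract embeddings furnished by the Duality Theorem coincide with the concrete tensor-to-polynomial maps after the representation-theorem identification, and that passing to completions does not change the dual spaces involved.
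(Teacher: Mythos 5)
Your proposal is correct and follows essentially the same route as the paper, which derives the Embedding Theorem directly from the Duality Theorem~\ref{duality theorem} together with the Representation Theorem \eqref{representation theorem}; the paper leaves the identification and the completion argument implicit, and your write-up simply makes those bookkeeping steps (including the norm-one verification on an elementary tensor) explicit.
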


The following result shows how dominations between s-tensor norms translate into inclusions between maximal polynomial ideals, and viceversa.

\begin{proposition} \label{relacion ideales y normas}
Let $\Q_1$ and $\Q_2$ be maximal polynomial ideals with associated tensor norms $\alpha_1$ and $\alpha_2$ respectively, $E$ be a normed space and $c \geq 0$. Consider the following conditions
\begin{enumerate}
\item $\alpha_2' \leq c \alpha_1'$ on $\os E$;
\item $\Q_2(E) \subset  \Q_1(E) $ and $\| \; \|_{\Q_1} \leq c \; \| \; \|_{\Q_2};$
\item $ \overleftarrow{\alpha_1} \leq c \; \overleftarrow{\alpha_2}$ on $\os E'$.
\end{enumerate}
Then
\begin{enumerate}
\item[(a)] $(1) \Leftrightarrow (2) \Rightarrow (3)$.
\item[(b)] If $E'$ has the metric approximation property then $(1), (2)$ and $(3)$ are equivalent.
\end{enumerate}
\end{proposition}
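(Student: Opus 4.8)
The plan is to read all three conditions through the Representation Theorem \eqref{representation theorem}, which, since $\Q_1$ and $\Q_2$ are maximal, yields the isometric identifications $\Q_i(E)\overset{1}{=}\big(\widetilde{\otimes}^{n,s}_{\alpha_i'}E\big)'$. Under these, condition $(2)$ says precisely that the formal inclusion $(\os_{\alpha_2'}E)'\subseteq(\os_{\alpha_1'}E)'$ holds with $\|\,\cdot\,\|_{(\os_{\alpha_1'}E)'}\le c\,\|\,\cdot\,\|_{(\os_{\alpha_2'}E)'}$. I would first dispose of $(1)\Leftrightarrow(2)$ as a routine duality of norms: an inequality $\alpha_2'\le c\,\alpha_1'$ between two norms on the space $\os E$ is equivalent to the reverse inequality between the corresponding dual norms on the functionals, each direction being a one-line Hahn--Banach argument (the norm of a tensor is the supremum of $|\langle p,\cdot\rangle|$ over the dual unit ball, and the norm of a functional is the supremum over the predual unit ball).

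For $(1)\Rightarrow(3)$ I would invoke the Duality Theorem~\ref{duality theorem}, whose isometry \eqref{duality 2} gives $\overleftarrow{\alpha_i}(w;\os E')=\|w\|_{(\os_{\alpha_i'}E)'}$ for every $w\in\os E'$. Dualizing $(1)$ as above produces $\|\,\cdot\,\|_{(\os_{\alpha_1'}E)'}\le c\,\|\,\cdot\,\|_{(\os_{\alpha_2'}E)'}$ for \emph{all} functionals, in particular for the finite-type elements $w\in\os E'$; rewriting both sides through \eqref{duality 2} gives $\overleftarrow{\alpha_1}\le c\,\overleftarrow{\alpha_2}$ on $\os E'$, which is $(3)$. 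This uses no hypothesis on $E$, so it finishes part $(a)$.

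The substance is the converse $(3)\Rightarrow(1)$ when $E'$ has the metric approximation property, i.e. part $(b)$. Here I would apply Proposition~\ref{coinciden con m.a.p.} to $E'$ to replace $\overleftarrow{\alpha_i}$ by $\alpha_i$ on $\os E'$, so that $(3)$ becomes $\alpha_1(w;\os E')\le c\,\alpha_2(w;\os E')$ for all $w\in\os E'$. To push this back to $\os E$ the decisive tool is the corollary to the Duality Theorem: since $E'$ has the metric approximation property, the mapping \eqref{sin prima} is an \emph{isometry}, which applied to the finitely generated norm $\alpha_2'$ (so that $(\alpha_2')'=\alpha_2$) gives $\os_{\alpha_2'}E\overset{1}{\hookrightarrow}(\os_{\alpha_2}E')'$, that is, the norming formula $\alpha_2'(z;\os E)=\sup\{|\langle w,z\rangle|:w\in\os E',\ \alpha_2(w;\os E')\le1\}$. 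For each such $w$ one has $\alpha_1(w;\os E')\le c$ by $(3)$, while \eqref{con prima} yields $|\langle w,z\rangle|\le\alpha_1(w;\os E')\,\alpha_1'(z;\os E)\le c\,\alpha_1'(z;\os E)$; taking the supremum over $w$ gives $\alpha_2'(z;\os E)\le c\,\alpha_1'(z;\os E)$, which is $(1)$.

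The hard part, and the reason $(b)$ needs an approximation hypothesis while $(a)$ does not, is exactly this norming step: without some approximation property the finite-type tensors $\os E'$ need not be norming for $\os_{\alpha_2'}E$ (they always separate the points of $\os E$, since $\os\K_E$ is injective, but this does not by itself give a norming \emph{constant} equal to $1$). The metric approximation property of $E'$ is what upgrades separation to an isometric norming, precisely via \eqref{sin prima}; concretely this reflects the fact that an optimal functional $p\in\Q_2(E)$ can be approximated by the finite-type polynomials $p\circ T_\eta$, where $(T_\eta)$ is the net furnished by the approximation property, since $\|p\circ T_\eta\|_{\Q_2}\le\|p\|_{\Q_2}$ and $\langle p\circ T_\eta,z\rangle\to\langle p,z\rangle$. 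I would take care that the constant is not degraded: because \eqref{sin prima} and \eqref{con prima} are genuine isometries, not merely norm-one maps, under the metric approximation property, the constant $c$ passes through unchanged.
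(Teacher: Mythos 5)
Your proposal is correct and follows essentially the same route as the paper: the Representation Theorem for $(1)\Leftrightarrow(2)$, the Duality Theorem (isometry \eqref{duality 2}, which is what the paper's Embedding Theorem repackages) for $(2)\Rightarrow(3)$, and Proposition~\ref{coinciden con m.a.p.} together with the isometric form of \eqref{sin prima} under the metric approximation property for $(3)\Rightarrow(1)$ --- your supremum computation is just the paper's commutative diagram unwound elementwise.
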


\begin{proof}
$(a)$ The statement $(1) \Leftrightarrow (2)$ can be easily deduced from the Representation Theorem (see (\ref{representation theorem}) above).

Let us show $(2) \Rightarrow (3)$. Let $z \in \os E'$. By the Embedding Theorem \ref{embedding theorem} we have:
$$ \os_{\overleftarrow{\alpha_1}}E' \overset{1}{\hookrightarrow} \Q_1(E),$$
$$ \os_{\overleftarrow{\alpha_2}}E' \overset{1}{\hookrightarrow} \Q_2(E).$$
Denote by $p_z \in \mathcal{P}(^nE)$ the polynomial that represents $z$.
Thus, $$\overleftarrow{\alpha_1}(z) = \|p_z\|_{\Q_1(E)} \leq c \|p_z\|_{\Q_2(E)} = c \overleftarrow{\alpha_2}(z).$$

$(b)$ Since $E'$ has the metric approximation property, so does $E$ \cite[Corollary 1 in 16.3.]{DefFlo93}. Thus, by Proposition \ref{coinciden con m.a.p.}, for $i=1,2$ we have  $\overrightarrow{\alpha_i}=\alpha_i$ and $\overleftarrow{\alpha_i'}=\alpha_i'$ on $\os E'$ and $\os E$ respectively.  Condition $(3)$ states that the mapping $(**)$ in the following diagram has norm at most $c$.
\begin{equation}\label{diamaccess}
\xymatrix{
{\os_{\alpha_1'}E=\os_{\overleftarrow{\alpha_1'}}E}  \ar@{-->}[d]^{(*)} \ar@{^{(}->}[rr]^{ 1 \;\;\;\;\;\;\;\;\;\; } & & {(\os_{\alpha_1''}E')' = (\os_{\overrightarrow{\alpha_1}}E')'= (\os_{\alpha_1}E')'} \ar[d]^{(**)} \\
{\os_{\alpha_2'}E=\os_{\overleftarrow{\alpha_2'}}E}  \ar@{^{(}->}[rr]^{ 1 \;\;\;\;\;\;\;\;\;\;} & & {(\os_{\alpha_2''}E')' = (\os_{\overrightarrow{\alpha_2}}E')'= (\os_{\alpha_2}E')'} \\
},
\end{equation}
Since the diagram commutes we can conclude that the mapping $(*)$ is continuous with norm $\leq c$. Therefore $(3)$ implies $(1)$.
\end{proof}

The previous proposition is a main tool for translating results on s-tensor norms into results on polynomial ideals. As an example, we have the following polynomial version of the  $\mathcal{L}_p$-Local Technique Lemma~\ref{local technique}.

\begin{theorem} \label{local technique for ideals}
\textbf{\emph{($\mathcal{L}_p$-Local Technique Lemma for Maximal ideals.)}}
Let $\Q_1$ and $\Q_2$ be polynomial ideals with $\Q_1$ maximal  and let $c>0$. Consider the following assertions.

(a) $\| \; \|_{\Q_1(\ell^m_p)} \leq c \; \| \; \|_{\Q_2(\ell^m_p)}$ for all $m\in\mathbb N$.

(b) $\Q_2(\ell_p) \subset \Q_1(\ell_p)$ and $\| \; \|_{\Q_1(\ell_p)} \leq c \; \| \; \|_{\Q_2(\ell_p)}$.

Then (a) and (b) are equivalent and imply that $$\Q_2(E) \subset \Q_1(E)\text{ and } \| \; \|_{\Q_1(E)} \leq c\lambda^n  \; \| \; \|_{\Q_2(E)}$$ for every $\mathcal{L}_{p,\lambda}^g$ normed space $E$.
\end{theorem}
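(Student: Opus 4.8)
The plan is to prove the equivalence of (a) and (b) first, and then derive the general statement for $\mathcal{L}_{p,\lambda}^g$-spaces by translating everything into the language of s-tensor norms and applying the $\mathcal{L}_p$-Local Technique Lemma~\ref{local technique}. Throughout, let $\alpha_1$ and $\alpha_2$ denote the s-tensor norms associated to $\Q_1$ and $\Q_2$, so that $\Q_i(M) \overset 1 = \os_{\alpha_i} M$ for every $M \in FIN$. Since $\ell_p^m$ is finite dimensional, assertion (a) is, by the Representation Theorem, exactly the statement that $\alpha_2' \leq c\,\alpha_1'$ on $\os \ell_p^m$ for every $m$.

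First I would establish (a) $\Rightarrow$ (b). The key observation is that $\ell_p$ is an $\mathcal{L}_{p,1}^g$-space, so I can apply the already-proven (to be derived) general implication to $E = \ell_p$ with $\lambda = 1$. Thus it suffices to prove the final general implication assuming only (a), and then (b) follows as the special case $E = \ell_p$. For the converse (b) $\Rightarrow$ (a), I would simply restrict: given a polynomial on $\ell_p^m$, extend it (or rather use the isometric inclusion $\ell_p^m \hookrightarrow \ell_p$ and the metric mapping property of the ideals via condition (ii)) to compare the norms on $\ell_p^m$ with those on $\ell_p$; since finite-dimensional norms are recovered from the ideal on $\ell_p$ through restriction and the maximality of $\Q_1$, assertion (a) is immediate from (b).

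The core of the argument is the general implication. Assume (a), equivalently $\alpha_2' \leq c\,\alpha_1'$ on $\os \ell_p^m$ for all $m$. Apply the $\mathcal{L}_p$-Local Technique Lemma~\ref{local technique} with the pair $(\alpha_2', \alpha_1')$ in the roles of $(\alpha, \beta)$: for any $\mathcal{L}_{p,\lambda}^g$-space $E$ we obtain
$$\alpha_2' \leq \lambda^n c\, \overrightarrow{\alpha_1'} \;\;\; \text{on} \;\;\; \os E.$$
Since $\alpha_1'$ is a dual s-tensor norm, it is finitely generated, so $\overrightarrow{\alpha_1'} = \alpha_1'$ by \eqref{bidual de una norma tensorial}. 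Hence $\alpha_2' \leq \lambda^n c\, \alpha_1'$ on $\os E$. This is precisely condition (1) of Proposition~\ref{relacion ideales y normas} with constant $\lambda^n c$, which is equivalent to condition (2) there, namely $\Q_2(E) \subset \Q_1(E)$ with $\|\;\|_{\Q_1(E)} \leq \lambda^n c\, \|\;\|_{\Q_2(E)}$, as desired.

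The main obstacle I anticipate is a subtle asymmetry in the hypotheses: the Local Technique Lemma produces a bound involving the \emph{finite hull} $\overrightarrow{\alpha_1'}$, and the whole argument goes through cleanly only because $\alpha_1'$ is automatically finitely generated (being a dual norm). This is exactly where the maximality of $\Q_1$ is used: $\Q_1$ maximal guarantees, via the Representation Theorem~\eqref{representation theorem}, the isometric identification $\Q_1(E) \overset 1 = (\os_{\alpha_1'} E)'$ for \emph{all} spaces $E$, not merely finite-dimensional ones, and it is this global identification that lets me read the tensor-norm inequality back as an ideal inclusion. No maximality is needed on $\Q_2$ because only its associated finitely generated norm $\alpha_2'$ enters, and finite-dimensional data determine it. I would take care to note that the equivalence of (a) and (b) relies on $\ell_p$ being $\mathcal{L}_{p,1}^g$, so that the constant is genuinely $c$ rather than $c\lambda^n$ in that special case.
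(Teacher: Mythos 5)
Your core argument for the main implication --- translating (a) into $\alpha_2' \leq c\,\alpha_1'$ on $\os \ell_p^m$ via the Representation Theorem, applying the $\mathcal{L}_p$-Local Technique Lemma~\ref{local technique} to the pair $(\alpha_2',\alpha_1')$, absorbing the finite hull because $\alpha_1'$ is a dual and hence finitely generated norm, and reading the resulting inequality back through Proposition~\ref{relacion ideales y normas} --- is exactly the paper's route (you even have the inequality oriented consistently, whereas the paper's text contains a slip of direction at this step). You differ from the paper in how you get (a)~$\Rightarrow$~(b): you specialize the general conclusion to $E=\ell_p$, which is an $\mathcal{L}_{p,1}^g$-space, while the paper deduces it from Corollary~\ref{corolario density lemma}, the Schauder-basis consequence of the Density Lemma. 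Both are valid; yours is more economical because it reuses the main argument, while the paper's version isolates a statement about restrictions to basis subspaces that is useful in its own right. The one genuine (though small and reparable) gap is your final appeal to Proposition~\ref{relacion ideales y normas}: that proposition is stated for two \emph{maximal} ideals, so its equivalence $(1)\Leftrightarrow(2)$ cannot be applied directly to $\Q_2$, which is not assumed maximal. The paper closes this by observing that $\alpha_2$ is also the s-tensor norm associated to $(\Q_2)^{max}$, obtaining first $(\Q_2)^{max}(E)\subset\Q_1(E)$ with $\|\cdot\|_{\Q_1(E)}\leq c\lambda^n\,\|\cdot\|_{(\Q_2)^{max}(E)}$, and then using $\Q_2(E)\subset(\Q_2)^{max}(E)$ together with $\|\cdot\|_{(\Q_2)^{max}(E)}\leq\|\cdot\|_{\Q_2(E)}$. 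Your parenthetical remark that maximality of $\Q_2$ is not really needed is correct in substance, but this one-line detour through the maximal hull is what makes it precise.
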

\begin{proof}
Using Corollary~\ref{corolario density lemma} we easily obtain that (a) implies (b).

On the other hand, since the subspace spanned by the first $m$ canonical vectors in $\ell_p$ is a 1-complemented subspace isometrically isomorphic to $\ell_p^m$, we get that (b) implies (a) by the metric mapping property.

Let us show that $(a)$ implies the general conclusion. Denote by $\alpha_1$ and $\alpha_2$ the s-tensor norms associated to $\Q_1$ and $\Q_2$ respectively. By (a) and the Representation Theorem for maximal polynomial ideals \eqref{representation theorem}, we have $\alpha_1' \leq c \: \alpha_2'$ on $\os \ell_{p}^m$.
Using the $\mathcal{L}_p$-Local Technique Lemma~\ref{local technique} we get $\alpha_2' \leq c \lambda^n \alpha_1'$ on $\os E$.
Notice that $\alpha_2$ is also associated with $(\Q_2)^{max}$, thus by Proposition~\ref{relacion ideales y normas} we obtain $(\Q_2)^{max}(E) \subset \Q_1(E)\text{ and } \| \; \|_{\Q_1(E)} \leq c\lambda^n  \; \| \; \|_{(\Q_2)^{max}(E)}.$
Since $\Q_2(E) \subset (\Q_2)^{max}(E)$ and $\| \; \|_{(\Q_2)^{max}(E)} \leq \| \; \|_{\Q_2(E)}$, we finally obtain
$\Q_2(E) \subset \Q_1(E)$ with $\| ; \|_{\Q_1(E)} \leq c\lambda^n  \; \| \; \|_{\Q_2(E)}.$
\end{proof}

\smallskip

For the case $p=\infty$, $\ell_p$ in assertion (b) should be replaced by $c_0$. Moreover,  $\ell_\infty$ is a $\mathcal{L}_{\infty,1}^g$-space and $\ell_\infty^n$ is 1-complemented in $\ell_\infty$ for each $n$. So, in particular, we have: two maximal ideals coincide on $c_0$ if and only if they coincide on $\ell_\infty$.
Let us introduce a polynomial ideal for which the last remark applies.

A polynomial $p \in \mathcal{P}^n$ is \emph{extendible} \cite{KirRy98}
if for any Banach space $F$ containing $E$ there exists
$\widetilde p \in {\mathcal P}^n(F)$ an extension of $p$. We will
denote the space of all such polynomials by ${\mathcal
P}_e^n(E)$. For $p\in {\mathcal P}_e^n(E)$, its extendible
norm is given by
$$\begin{array}{rl}
\Vert p\Vert _{{\mathcal P}_e^n(E)}=\inf \{C>0:  & \mbox{for
all }F\supset E
\mbox{ there is an extension of }p\mbox{ to }F  \\
& \mbox{ with norm}\leq C\}.
\end{array}$$

With this definition, every polynomial on $\ell_\infty$ is extendible, since $\ell_\infty$ is an injective Banach space. Therefore, although $c_0$ is not injective, we get that every polynomial on $c_0$ is extendible (by our previous comment). We remark that  the extendibility of polynomials on $c_0$ is a known fact, and that it can also be obtained from the Extension Lemma.

Since Hilbert spaces are $\mathcal{L}_{p}^g$ for any $1<p<\infty$ (see Corollary 2 in \cite[23.2]{DefFlo93}), we obtain also the following.

\begin{corollary}\label{ele2}
Let $\Q_1$ and $\Q_2$ be polynomial ideals, $\Q_1$ maximal. If for some  $1 < p < \infty$ we have
$\Q_2(\ell_p) \subset \Q_1(\ell_p)$, then we also have $\Q_2(\ell_2) \subset \Q_1(\ell_2)$.
\end{corollary}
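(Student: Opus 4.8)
The plan is to deduce the statement directly from the $\mathcal{L}_p$-Local Technique Lemma for maximal ideals (Theorem~\ref{local technique for ideals}), exploiting that a Hilbert space is an $\mathcal{L}_{p,\lambda}^g$-space for every $1<p<\infty$. The only delicate point is that Theorem~\ref{local technique for ideals} takes as hypothesis the \emph{quantitative} inclusion, namely condition (b): $\Q_2(\ell_p)\subset\Q_1(\ell_p)$ \emph{together with} a norm estimate $\|\cdot\|_{\Q_1(\ell_p)}\le c\,\|\cdot\|_{\Q_2(\ell_p)}$, whereas our hypothesis only provides the set-theoretic inclusion $\Q_2(\ell_p)\subset\Q_1(\ell_p)$. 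So first I would manufacture such a constant $c$.

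To produce $c$ I would appeal to the closed graph theorem. Recall that for any polynomial ideal $\Q$ one has $\|q\|_{\P(F)}\le\|q\|_{\Q(F)}$ for every $q\in\Q(F)$: evaluating at $x\in B_F$ through the operator $\mathbb{K}\to F$, $\lambda\mapsto\lambda x$, and using properties (ii) and (iii) of the ideal, one gets $|q(x)|\le\|q\|_{\Q(F)}\,\|x\|^n$, hence $\|q\|_{\P(F)}\le\|q\|_{\Q(F)}$. Consequently both inclusions $\Q_i(\ell_p)\hookrightarrow\P(\ell_p)$ are continuous. Thus if $p_k\to p$ in $\Q_2(\ell_p)$ and $p_k\to q$ in $\Q_1(\ell_p)$, then $p=q$ already in $\P(\ell_p)$, so the inclusion $\Q_2(\ell_p)\hookrightarrow\Q_1(\ell_p)$ has closed graph and is therefore bounded. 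This yields a constant $c>0$ with $\|\cdot\|_{\Q_1(\ell_p)}\le c\,\|\cdot\|_{\Q_2(\ell_p)}$, i.e.\ precisely condition (b) of Theorem~\ref{local technique for ideals}.

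With condition (b) in hand, Theorem~\ref{local technique for ideals} guarantees that $\Q_2(E)\subset\Q_1(E)$ with $\|\cdot\|_{\Q_1(E)}\le c\lambda^n\|\cdot\|_{\Q_2(E)}$ for every $\mathcal{L}_{p,\lambda}^g$-space $E$. Since a Hilbert space is $\mathcal{L}_{p,\lambda}^g$ for some $\lambda=\lambda(p)$ whenever $1<p<\infty$ (see Corollary 2 in \cite[23.2]{DefFlo93}), I would apply this with $E=\ell_2$ to obtain $\Q_2(\ell_2)\subset\Q_1(\ell_2)$, which is the assertion. The main (and essentially only) obstacle is the first step, the passage from the bare set inclusion to a uniform norm bound; once the closed graph argument supplies $c$, the result reduces to invoking the already-established Theorem~\ref{local technique for ideals}.
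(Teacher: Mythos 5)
Your proof is correct and follows essentially the same route as the paper, which presents Corollary~\ref{ele2} as an immediate consequence of Theorem~\ref{local technique for ideals} together with the fact that Hilbert spaces are $\mathcal{L}_{p}^g$-spaces for every $1<p<\infty$. The closed graph argument you supply to upgrade the set-theoretic inclusion $\Q_2(\ell_p)\subset\Q_1(\ell_p)$ to the quantitative condition (b) is exactly the standard step the paper leaves implicit, and it is carried out correctly.
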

As a consequence, if two maximal polynomial ideals do not coincide on $\ell_2$, then they are different in every $\ell_p$ with $1<p<\infty$.

\medskip

Proposition~\ref{relacion ideales y normas},  Theorem~\ref{local technique for ideals} and Corollary~\ref{ele2} have their analogues for minimal ideals. For Theorem~\ref{local technique for ideals} and Corollary~\ref{ele2}, the hypothesis on maximality of  $\Q_1$  should be changed for the requirement that $\Q_2$ be minimal.

\bigskip

We end this note with a few words about accessibility of s-tensor norms and polynomial ideals.
We will say that an s-tensor norm $\alpha$ is \emph{accessible} if $\overrightarrow{\alpha} = \alpha = \overleftarrow{\alpha}$.
For example, item (1) of Corollary~\ref{observacionespiolas} implies that every injective s-tensor norm is accessible.

The definition of accessible polynomial ideals is less direct.
By $\P_f$ we will denote the class of finite type polynomials.
We say that a polynomial ideal $\Q$ is \textit{accessible} (a term coined in \cite[3.6.]{Flo01}) if the following condition holds:
for every normed space $E$, $q \in \p_f(E)$ and $\varepsilon > 0$, there is a closed finite codimensional space $L \subset E$ and $p\in \P(E/L)$ such that $q = p \circ Q_{L}^E$ (where $Q_{L}^E$ is the canonical quotient map) and $\|p\|_{\q}~\leq~(1 + \varepsilon)~\|q\|_{\q}.$

In \cite[Proposition 3.6.]{Flo01} it is shown that, if $\Q$ is accesible then
$$ \Q^{min}(E) \overset 1 \hookrightarrow \Q(E) \; \; \text{and} \; \; \Q^{min}(E) = \overline{\P_f(E)}^{\Q},$$
for every Banach space $E$.
Or, in other words,
$$ \osc_{\alpha} E' =\Q^{min}(E) \overset 1 \hookrightarrow \Q(E),$$
where $\alpha$ is the s-tensor associated to $\Q$. This `looks like' the embedding Theorem \ref{embedding theorem}.

One may wonder how the definition of accessibility of a polynomial ideal relates with the one for its associated s-tensor norm.
The next proposition sheds some light on this question.

\begin{proposition}
Let  $\Q$ be a  polynomial ideal and let $\alpha$ be its associated s-tensor norm. Then, $\alpha$ is accessible if and only if $\Q^{max}$ is, in which case $\Q$ is also accessible.
\end{proposition}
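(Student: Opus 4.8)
The plan is to reduce the whole statement to the single equality $\alpha=\overleftarrow{\alpha}$. Since the associated norm $\alpha$ is by definition finitely generated, we have $\alpha=\overrightarrow{\alpha}$, so $\alpha$ is accessible if and only if $\alpha=\overleftarrow{\alpha}$, that is, if and only if $\alpha$ is cofinitely generated. Thus everything follows once I translate the accessibility of $\Q^{max}$ into the equality $\alpha=\overleftarrow{\alpha}$ on dual spaces, and then upgrade this to all normed spaces.

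First I would rewrite the accessibility of $\Q^{max}$ in tensor language. A finite type polynomial $q\in\p_f(E)$ corresponds to a tensor $z\in\os E'$, and the Embedding Theorem~\ref{embedding theorem} applied to the maximal ideal $\Q^{max}$ (together with the Duality Theorem~\ref{duality theorem}) gives $\|q\|_{\Q^{max}(E)}=\overleftarrow{\alpha}(z;\os E')$. On the other hand, a factorization $q=p\circ Q_L^E$ with $L\in COFIN(E)$ forces the functionals appearing in $q$ to lie in $L^0=(E/L)'$, so $z\in\os L^0$; and since $E/L$ is finite dimensional, the finite dimensional duality \eqref{prima para dim finita} (using $\alpha=\alpha''$) yields $\|p\|_{\Q(E/L)}=\alpha(z;\os L^0)$. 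Recalling from the proof of the Duality Theorem that $FIN(E')=\{L^0:L\in COFIN(E)\}$ and that $\overrightarrow{\alpha}=\alpha$, the accessibility of $\Q^{max}$ becomes exactly the requirement that
$$\alpha(z;\os E')=\overleftarrow{\alpha}(z;\os E')\qquad\text{for every normed space }E,$$
i.e. $\alpha=\overleftarrow{\alpha}$ on all dual spaces.

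The key step is then to show that $\alpha=\overleftarrow{\alpha}$ on all dual spaces already forces $\alpha=\overleftarrow{\alpha}$ on every normed space $G$, the converse being trivial. For $z\in\os G$ I would run the chain
$$\overleftarrow{\alpha}(z;\os G)=\overleftarrow{\alpha}(\os \K_G z;\os G'')=\alpha(\os \K_G z;\os G'')=\alpha(z;\os G),$$
where the first equality is the Embedding Lemma~\ref{embedding} applied to the cofinitely generated norm $\overleftarrow{\alpha}$, the second is the hypothesis applied to the dual space $G''=(G')'$, and the third is the Embedding Lemma~\ref{embedding} applied to the finitely generated norm $\alpha$. This shows $\alpha$ is cofinitely generated, hence accessible, completing the equivalence $\alpha$ accessible $\iff\Q^{max}$ accessible. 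I expect this triple use of the Embedding Lemma (for both $\alpha$ and $\overleftarrow{\alpha}$, pivoting through the bidual) to be the main point of the argument, since it is precisely what converts information on dual spaces into information on arbitrary spaces.

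Finally, to see that $\Q$ itself is accessible under these equivalent conditions, I would reuse the factorizations provided by the accessibility of $\Q^{max}$. Given $q\in\p_f(E)$ and $\varepsilon>0$, choose $L\in COFIN(E)$ and $p\in\P(E/L)$ with $q=p\circ Q_L^E$ and $\|p\|_{\Q^{max}(E/L)}\le(1+\varepsilon)\|q\|_{\Q^{max}(E)}$. Since $\Q$ and $\Q^{max}$ coincide isometrically on the finite dimensional space $E/L$, we have $\|p\|_{\Q(E/L)}=\|p\|_{\Q^{max}(E/L)}$; and since $\|q\|_{\Q^{max}(E)}\le\|q\|_{\Q(E)}$ always holds, the same $L$ and $p$ witness $\|p\|_{\Q(E/L)}\le(1+\varepsilon)\|q\|_{\Q(E)}$. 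Hence $\Q$ is accessible, as claimed.
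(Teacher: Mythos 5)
Your proof is correct and takes essentially the same route as the paper's: both arguments hinge on identifying $\|\cdot\|_{\Q^{max}(E)}$ with $\overleftarrow{\alpha}$ on $\os E'$ via the Duality/Embedding Theorems, using finite generation of $\alpha$ together with $FIN(E')=\{L^0 : L\in COFIN(E)\}$ to translate the factorization condition of accessibility, and invoking the Embedding Lemma to pass between $\os E$ and $\os E''$. Your packaging into the two equivalences ($\Q^{max}$ accessible $\iff$ $\alpha=\overleftarrow{\alpha}$ on dual spaces, and the latter $\iff$ $\alpha$ cofinitely generated) is just a tidier reorganization of the paper's two directions, and your final step deducing accessibility of $\Q$ coincides with the paper's closing remark.
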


\begin{proof}
Suppose that $\alpha$ is accessible. Then $\alpha$ is finitely and cofinitely generated. Fix $E$ a normed space, $q \in \p_f(E)$ and $\varepsilon > 0$.
Let $z \in \os_{\alpha} E'$ the representing tensor of the polynomial $q$.
Since $\alpha$ is cofinitely generated, by the Duality Theorem \ref{duality theorem} and the Representation Theorem (\ref{representation theorem}) we have
$$ \os_{\alpha} E' \overset{1}{\hookrightarrow} (\os_{\alpha'} E)' = \Q^{max}(E).$$
Thus, $\alpha(z; \os E') = \|q\|_{\Q^{max}(E)}.$
Using that $\alpha$ is finitely generated we can find $M \in FIN(E')$ such that $z \in \os M$ and
$$\alpha(z; \os M) \leq (1+\varepsilon) \|q\|_{\Q^{max}(E)}.$$
Let $L:=M^{0} \subset E$, identifying $M'$ with $E/L$ and denote $p$ the representing polynomial of the tensor $z \in \os M$ defined in $E/L$. Therefore
$\|p\|_{\Q^{max}(E/L)} = \alpha(z; \os M) \leq (1+\varepsilon) \|q\|_{\Q^{max}(E)}$ and obviously $q = p \circ Q_{L}^E$ where $Q_E^L: E\to  E/L$ is the natural quotient mapping.

For the converse we must show that
$\alpha(\; \cdot \;; \os E) = \overleftarrow{\alpha}(\; \cdot \;, \os E).$
By the Embedding Lemma~\ref{embedding} it is sufficient to prove that $\alpha(\; \cdot \;, \os E'') = \overleftarrow{\alpha}(\; \cdot \;, \os E'').$
We will denote $E'$ by $F$, let $z \in \os F'$ and $\varepsilon > 0$.
By the Duality Theorem~\ref{duality theorem}
$$ \os_{\overleftarrow{\alpha}}F' \overset{1}{\hookrightarrow} (\os_{\alpha '} F)'=\Q^{max}(F).$$
Denote by $q$ the polynomial that represents $z$ in $\Q^{max}(F)$; by hypothesis there exist a subspace $L \in COFIN(F)$ and a polynomial $p \in \Q^{max}(F/L)$ such that $q=p \circ Q_{L}^F$ with $\|p\|_{\Q^{max}(F/L)}\leq (1 + \varepsilon) \| q \|_{\Q^{max}(F)}$. If $w$ is the tensor that represents $p$ in $\os L^0 = \os (F/L)'$, we have $(\os Q_{L}^F)(w) = z$. By the metric mapping property,
\begin{align*}
\alpha(z; \os F) & \leq \alpha (w ; \os F/L) \\
& = \|p\|_{\Q^{max}(F/L)}\\
& \leq (1 + \varepsilon) \| q \|_{\Q^{max}(F)}\\
& = (1 + \varepsilon) \overleftarrow{\alpha}(z; \os F),
\end{align*} which proves that $\alpha$ is accessible.

Finally, we always have $\|\cdot\|_{\Q^{max}} \le \|\cdot\|_{\Q}$, with equality in finite dimensional spaces. The definition of accessibility then implies that, if $\Q^{max}$ is accessible, then so is $\Q$.
\end{proof}

\bigskip

\end{document}